%
%
\documentclass[reqno,12pt]{amsart}




\usepackage{color} 
\usepackage{ifpdf}
\ifpdf
    \usepackage[pdftex]{graphicx}
    \usepackage[pdftex]{hyperref}
    \hypersetup{
        unicode=false,          
        pdftoolbar=true,        
        pdfmenubar=true,        
        pdffitwindow=false,     
        pdfstartview={FitH},    
        pdftitle={MCP Article},      
        pdfauthor={Michael Holst},   
        pdfsubject={Mathematics},    
        pdfcreator={Michael Holst},  
        pdfproducer={Michael Holst}, 
        pdfkeywords={PDE, analysis, mathematical physics}, 
        pdfnewwindow=true,      
        colorlinks=true,        
        linkcolor=red,          
        citecolor=blue,         
        filecolor=magenta,      
        urlcolor=cyan           
    }

    \typeout{====== Invoked by pdflatex ======================}
\else
    \usepackage{graphicx}
    \usepackage{pstricks}
    
    \newcommand{\href}[2]{#2}
    \typeout{====== Invoked by latex ======================}
\fi

\usepackage{times}
\usepackage{amsfonts}

\usepackage{amsmath}
\usepackage{amsthm}
\usepackage{amssymb}
\usepackage{amsbsy}
\usepackage{amscd}

\usepackage{enumerate}
\usepackage{verbatim}
\usepackage{subfigure}




\newtheorem{theorem}{Theorem}[section]

\newtheorem{lemma}[theorem]{Lemma}

\newtheorem{proposition}[theorem]{Proposition}

\newtheorem{definition}[theorem]{Definition}

\newtheorem{remark}[theorem]{Remark}

\numberwithin{equation}{section}  





  \newcounter{mnote}
  \setcounter{mnote}{0}
  
  \newcounter{mnoteR}
  \setcounter{mnoteR}{0}
  
\let\oldmarginpar\marginpar
  \renewcommand\marginpar[1]{\-\oldmarginpar[\raggedleft\footnotesize #1]%
  {\raggedright\footnotesize #1}}



\definecolor{myblue}{rgb}{0.2,0.2,0.7}
\definecolor{mygreen}{rgb}{0,0.6,0}
\definecolor{mycyan}{rgb}{0,0.6,0.6}
\definecolor{myred}{rgb}{0.9,0.2,0.2}
\definecolor{mymagenta}{rgb}{0.9,0.2,0.9}
\definecolor{mywhite}{rgb}{1.0,1.0,1.0}
\definecolor{myblack}{rgb}{0.0,0.0,0.0}

\newcommand{\beq}{\begin{equation}}
\newcommand{\eeq}{\end{equation}}
\newcommand{\beqa}{\begin{eqnarray}}
\newcommand{\eeqa}{\end{eqnarray}}
%
%


%


%






%


%
     
      
%



%

%

%

%

%

%

%

%


%






\newcommand{\cD}{{\mathcal D}}
\newcommand{\cE}{{\mathcal E}}
\newcommand{\cF}{{\mathcal F}}
\newcommand{\cG}{{\mathcal G}}

\newcommand{\cL}{{\mathcal L}}
\newcommand{\cM}{{\mathcal M}}
\newcommand{\cN}{{\mathcal N}}
\newcommand{\cO}{{\mathcal O}}

\newcommand{\cS}{{\mathcal S}}

\newcommand{\cY}{{\mathcal Y}}



%


%


\newcommand{\bW}{{\bf W}}

\newcommand{\bj}{{\bf j}}

\newcommand{\bw}{{\bf w}}




\def\Om{\Omega}



%


%










\newcommand{\e}{\epsilon}
\newcommand{\ol}[1]{\overline{#1}}
\setlength{\topmargin}{-0.5in}
\setlength{\textheight}{9.5in}
\setlength{\textwidth}{5.85in}
\setlength{\oddsidemargin}{0.325in}
\setlength{\evensidemargin}{0.325in}
\setlength{\marginparwidth}{1.0in}

\setcounter{tocdepth}{2}

\begin{document}

\title[Generalized Solutions to Semilinear Elliptic PDE]
      {Generalized Solutions to Semilinear Elliptic PDE \\
       with Applications to the Lichnerowicz Equation}

\author[M. Holst]{Michael Holst}
\email{mholst@math.ucsd.edu}

\author[C. Meier]{Caleb Meier}
\email{meiercaleb@gmail.com}

\address{Department of Mathematics\\
         University of California San Diego\\ 
         La Jolla CA 92093}

\thanks{MH was supported in part by NSF Awards~1217175 and 1065972.}
\thanks{CM was supported in part by NSF Award~1065972.}

\date{\today}

\keywords{Nonlinear elliptic equations,
Einstein constraint equations,
{\em a priori} estimates,
barriers,
fixed point theorems,
generalized functions, 
Columbeau algebras
}

\begin{abstract}
In this article we investigate the existence of a solution to a semilinear, 
elliptic, partial differential equation with distributional coefficients 
and data.
The problem we consider is a generalization of the Lichnerowicz equation
that one encounters in studying the constraint equations in general relativity.
Our method for solving this problem consists of solving a net of regularized,
semilinear problems with data obtained by smoothing the original, 
distributional coefficients.
In order to solve these regularized problems, we develop {\em a priori}
$L^{\infty}$-bounds and sub- and super-solutions and then apply 
a fixed-point argument for order-preserving maps.
We then show that the net of solutions obtained through this 
process satisfies certain decay estimates by determining estimates
for the sub- and super-solutions and by utilizing classical, {\em a priori}
elliptic estimates.
The estimates for this net of solutions allow us to regard this collection 
of functions as a solution in a Colombeau-type algebra.
We motivate this Colombeau algebra framework by first solving an 
ill-posed critical exponent problem.
To solve this ill-posed problem, we use a collection of smooth, 
``approximating'' problems and then use the resulting sequence
of solutions and a compactness argument to obtain a solution to the 
original problem.
This approach is modeled after the more general Colombeau framework that 
we develop, and it conveys the potential that solutions in these abstract 
spaces have for obtaining classical solutions to ill-posed nonlinear 
problems with irregular data.   
\end{abstract}

\maketitle


\vspace*{-1.2cm}
{\tiny
\tableofcontents
}

\section{Introduction}

The goal of this paper is to develop a framework to extend the rough solution theory for the conformally rescaled
Einstein constraint equations when the mean curvature is constant.  In the event that the mean curvature is constant,
the conformally rescaled constraint equations decouple, leaving only a semilinear elliptic equation to solve.  In attempting
to extend the rough solution theory in this case, one is confronted with the problem of solving a semilinear elliptic equation 
with distributional coefficients.  If these coefficients do not lie in certain Sobolev spaces with somewhat exacting restrictions on their
indices, the resulting elliptic problem will not be well-defined in the normal weak sense.  In an effort to circumvent these
restrictions on the Sobolev classes of our coefficients, we develop a method to reformulate the
ill-posed, semilinear PDE with singular coefficients as a PDE in what is known as a Colombeau algebra.
These Colombeau algebras contain the space of distributions via an embedding, so one solves the PDE in the Colombeau algebra
and attempts to associate the generalized Colombeau solution with 
a distribution, thereby obtaining a distributional solution to the original ill-defined problem.

\subsection{ The Einstein Constraint Equations and Conformal Method}
The Einstein field equation $G_{\mu\nu} = \kappa T_{\mu\nu}$ can be formulated as an initial value (or Cauchy)
problem where the initial data consists of a Riemannian metric $\hat{g}_{ab}$ and a symmetric tensor $\hat{k}_{ab}$ on a specified $3$-dimensional
manifold $\cM$ \cite{HE75,RW84}.  However, one is not
able to freely specify such initial data.  Like Maxwell's equations, the initial data $\hat{g}_{ab}$ and $\hat{k}_{ab}$ must satisfy 
constraint equations, where the constraints take the form
\begin{align}
\hat{R}+\hat{k}^{ab}\hat{k}_{ab}+\hat{k}^2 = 2\kappa\hat{\rho}, \label{eq1:5aug12}\\
\hat{D}_b\hat{k}^{ab}-\hat{D}^a\hat{k} = \kappa \hat{j}^a. \label{eq2:5aug12}
\end{align}
Here $\hat{R}$ and $\hat{D}$ are the scalar curvature and covariant derivative associated with $\hat{g}_{ab}$, $\hat{k}$ is the trace of
$\hat{k}_{ab}$ and $\hat{\rho}$ and $\hat{j}^a$ are matter terms obtained by contracting $T_{\mu\nu}$ with a vector field normal
to $\cM$.  As the
Cauchy formulation of the Einstein field equations is one of the most important means of modeling and studying astrophysical 
phenomena, knowledge of the constraint equations is very important because of the influence that solutions to these equations has on
solutions to the evolution problem.
Moreover, a number of central questions in general relativity are addressed entirely through the study of the constraint equations alone (cf.~\cite{BI04} for discussion).

Equation \eqref{eq1:5aug12} is known as the Hamiltonian constraint while \eqref{eq2:5aug12} is known as the momentum
constraint, and collectively the two expressions are known as the Einstein constraint equations.
These equations form an underdetermined system of four equations to be solved for twelve unknowns $\hat{g}_{ab}$ and $\hat{k}_{ab}$.  
In order to transform the constraint equations into a determined system, one divides the unknowns into freely specifiable data and determined data by using what is known as the conformal
method.  In this method introduced by
Lichnerowicz \cite{AL44} and York \cite{JY71}, we assume that the metric $\hat{g}_{ab}$ is known up to a conformal factor and that
the trace $\hat{k}$ and a term proportional to a trace-free divergence-free part of $\hat{k}_{ab}$ is known.
Therefore the determined data in this formulation of the constraints is the conformal factor $\phi$ and a vector field $\bw$ whose symmetrized 
derivative represents the undetermined portion of $\hat{k}_{ab}$.  One obtains the following system   
\begin{align}
&-8\Delta\phi + R\phi + \frac23\tau^2\phi^5-[\sigma_{ab} +(\cL \bw)_{ab}][\sigma^{ab} +(\cL \bw)^{ab}]\phi^{-7}-2\kappa\rho\phi^{-3} = 0\label{eq4:7aug12}, \\
&-D_b(\cL \bw)^{ab}+\frac23D^a\tau \phi^6+\kappa j^a = 0, \label{eq5:7aug12}
\end{align}
which forms a determined, coupled nonlinear system of elliptic 
equations that is referred to as the conformal, transverse, traceless (CTT) formulation of the constraints.

In equations \eqref{eq4:7aug12}-\eqref{eq5:7aug12} the quantities $g_{ab}, \sigma_{ab}$, $\tau$, $\rho$, $j^a$ are freely specified and satisfy
\begin{align}\label{eq1:15nov12}
&\hat{g}_{ab} = \phi^4g_{ab}, \quad \hat{k}^{ab} = \phi^{-10}[\sigma^{ab}+(\cL w)^{ab}] + \frac13\phi^{-4}\tau g^{ab},\\
&\hat{j}^a = \phi^{-10}j^a,\quad \hspace{14mm} \quad \hat{\rho} = \phi^{-8}\rho,
\end{align} 
and $\Delta, \cL, D$ and $R$ are the Laplace-Beltrami operator, conformal Killing operator, covariant derivative and scalar curvature associated with $g_{ab}$.  
For a given choice of 
$g_{ab}, \sigma_{ab}, \\
\tau, \rho, j^a$, if one can solve \eqref{eq4:7aug12}-\eqref{eq5:7aug12} for $\phi$
and $\bw$, they obtain a solution to the constraint equations \eqref{eq1:5aug12}-\eqref{eq2:5aug12} by using Eq. \eqref{eq1:15nov12} to reconstruct the
physical solutions $\hat{g}_{ab}$ and $\hat{k}_{ab}$.  

\subsection{Solution Theory for the CTT Formulation}

The solution theory for the CTT formulation of the Einstein constraint equations on a closed manifold $\cM$
can be roughly classified according to the Yamabe class of the given metric $g_{ab}$, the properties of $\tau$ (the mean extrinsic curvature) and
the regularity of the specified data $(g_{ab},\tau, \sigma,\rho,\bj)$.  The mean curvature plays perhaps the largest role.  
If the mean curvature is constant, then the analysis of the conformal formulation simplifies greatly
because the Hamiltonian constraint and the momentum constraint decouple, leaving a single semilinear
elliptic PDE to analyze.  
For $C^2$ metrics, the classical solution theory for the conformal formulation with constant mean curvature (CMC) is now understood for all three Yamabe classes, and is summarized in \cite{JI95}.
The solution theory for low-regularity data $(g_{ab},\tau, \sigma, \rho,\bj)$, or so-called ``rough solution theory", is also well developed in the CMC case.
The most complete rough solution theory to date appears in~\cite{HNT09}, and allows for metrics $g_{ab} \in W^{s,p}$, with any pair $s,p$ satisfying $s> \frac3p$ and specified data
$\sigma,~ \rho,~ \bj$ satisfying
\begin{align}\label{conditions1}
&\bullet \sigma \in W^{e-1,q},\hspace{3.75 in}\\
&\bullet \rho \in W^{s-2,p}, \nonumber\\
&\bullet \bj \in \bW^{e-2,q},\nonumber
\end{align}
where $q$ and $e$ satisfy
\begin{align}\label{conditions2} 
&\bullet \frac1q \in (0,1)\cap [\frac{3-p}{3p},\frac{3+p}{3p}]\cap [\frac{1-d}{3},\frac{3+sp}{6p}),\hspace{1.50 in}\\
&\bullet e \in [1,\infty)\cap [s-1,s]\cap [\frac3q+d-1,\frac3q+d]\cap(\frac3q+\frac d2,\infty),\nonumber
\end{align}
with $d = s-p$.  There are also some additional assumptions on the Yamabe class of
$g_{ab}$ and the sign of $\sigma,~ \rho ~\text{and}~ \bj$. 
(cf. \cite{CB04,DMa05,DMa06,HNT08,HNT09}).

\subsection{Rough Solutions to the Constraint Equations}   

There is an incentive to develop a low regularity solution framework for the Einstein 
field equations to model plausible astronomical
phenomena such as cosmic strings and gravitational waves \cite{GKOS01}.
The solutions to the constraint equations not only place a restriction on which metrics and extrinsic 
curvature tensors can be considered as initial data, but they also determine the function spaces of maximally
globally hyperbolic solutions to the evolution problem \cite{BI04}.  
The solution theory for the constraint equations must therefore keep pace with the theory for the evolution equations, in order to avoid limiting the further theoretical development of the theory for the evolution problem.
Historically, the rough solution theory of the constraints has in fact lagged behind that of the evolution problem. 
The local well-posedness result for quasilinear hyperbolic systems in \cite{HKM76} allows for initial data $(g,K)$ in $H^{s}\times H^{s-1}$ for
$s>\frac52$; however it was not until \cite{CB04,DMa05,DMa06} that solutions of this regularity existed to the constraint
equations, and even these initial results were restricted to CMC solutions.  
Low regularity solutions became increasingly important when Klainerman and Rodnianski 
developed {\em a priori} estimates in \cite{KR05} for the time existence of 
solutions to the vacuum Einstein equations in terms
of the $H^{s-1}\times H^{s-1}$ norm of $( Dg, K)$, again with $s>2$.
This prompted Maxwell's work on the CMC case in \cite{DMa05} and \cite{DMa06},
Choquet-Bruhat's work on the CMC case in \cite{CB04}, 
and Holst's et al.'s work on both the CMC and non-CMC 
cases in \cite{HNT08,HNT09}.

One of the difficulties associated with obtaining rough solutions to the conformal formulation is that in general, Sobolev spaces are not closed under multiplication.
With the exception of the Banach spaces $W^{s,p}(\cM)$ with $s> d/p$ (where $d$ is the spatial dimension), the product of two Sobolev functions in a given space will not in general lie in that space.
This restriction is a by-product of a more general problem, which is that in general, there is no well-behaved definition of distributional multiplication that allows for the multiplication of arbitrary distributions.
One is instead confined to work with subspaces such as Sobolev spaces where point-wise multiplication is only well-defined for certain choices of Sobolev indices.
This greatly limits the Sobolev spaces that one considers 
when attempting to develop a weak formulation of a given elliptic partial differential
equation, and in particular, places a restriction on the regularity of the specified data $(g_{ab},\tau,\sigma,\rho,\bj)$
of the CTT equations.  

In order to overcome these limitations, we developed a framework to solve semilinear elliptic
problems similar to the Hamiltonian constraint in generalized function spaces known as Colombeau algebras.  
This work is a natural extension of the work done by Mitrovic and Pilipovic in \cite{MP06}, where the authors found generalized
solutions to linear, elliptic equations with distributional coefficients.  The advantage of solving PDE in these generalized
function spaces is that it allows one to circumvent the restrictions associated with Sobolev coefficients and
data, and thereby consider problems with coefficients and data of much lower regularity.  

\subsection{Low Regularity Semilinear Elliptic Problems}

If the mean curvature $\tau$ is constant, the CTT formulation \eqref{eq4:7aug12}-\eqref{eq5:7aug12}
reduces to 
\begin{align}
-\Delta \phi + \frac18 R\phi + \frac{1}{12}\tau^2\phi^5-\sigma^2\phi^{-7} - 2\kappa\rho\phi^{-3} = 0.
\end{align}
Locally, on a given chart element $\Om = \psi(U)$, this problem assumes the form
\begin{align}\label{eq1:18dec11}
-\sum_{i,j=1}^3 D_i(a^{ij} D_j u) &+ b_1u^5-b_2u^{-7}-b_3u^{-3} = 0 ~~~\text{on $\Om$},\\
 &u = \varphi \quad \text{on $\partial\Om$} \nonumber.
\end{align}
In an effort to extend the rough solution theory of the constraints, 
we are interested in solving \eqref{eq1:18dec11} with minimal regularity assumptions on the 
coefficients $a^{ij}, b_1, b_2 , b_3$ and boundary data $\varphi$.  Therefore,
in this paper we consider a family of elliptic, semilinear Dirichlet problems
that are of the form
\begin{align}
\label{problem}
-\sum_{i,j=1}^ND_i(a^{ij}D_ju) &+ \sum^K_{i=1}b^i u^{n_i} = 0 \quad \text{ in $\Om$},\\
&u= \rho \quad \text{on $\partial\Om$}, \nonumber
\end{align}
where $a^{ij}, b^i$ and $\rho$ are potentially distributional and 
$n_i \in \mathbb{Z}$ for each $i$.  

The main contributions of this article are an existence result for \eqref{problem} in a
Colombeau-type algebra, and an existence result in $H^{1}(\Om)$ for an ill-posed, critical exponent
problem of the form
\begin{align}\label{eq3:25oct11}
-\Delta  u +au^m&+bu^i =0 ~~~~\text{in $\Om$},\\
            &u = \rho \quad \text{on $\partial\Om$}, \nonumber
\end{align}
where $m \ge 5$, $1\le i \le 4$ are in $\mathbb{N}$, $\Om \subset \mathbb{R}^3$, $b\in L^{\infty}(\Om)$ and $a \in L^p(\Om)$ with $\frac65 \le p < \infty$.
The framework we use to prove existence for \eqref{problem} consists 
of embedding the singular data and coefficients into a Colombeau-type algebra 
so that multiplication of the distributional coefficients is well-defined.
To solve \eqref{eq3:25oct11}, we do not explicitly require the Colombeau machinery
that we develop to solve \eqref{problem}, 
but we use similar ideas to produce a sequence of functions
that converge to a solution of \eqref{eq3:25oct11} in $H^{1}(\Om)$. 

The Colombeau solution framework for this paper is based mainly on the ideas found in~\cite{MP06}.
Here we extend the work done by Mitrovic and Pilipovic 
in~\cite{MP06} to include a certain collection of semilinear problems.
While Pilipovic and Scarpalezos solved a divergent type, quasilinear
problem in a Colombeau type algebra in \cite{PS06}, the class of nonlinear problems we consider
here does not fit naturally into that framework.  Here we provide a solution
method that is distinct from those posed in \cite{PS06} and \cite{MP06} that is better suited for the class of semilinear
problems that we are interested in solving.
The set up of our problem is completely similar to the 
set-up in~\cite{MP06}:
given the semilinear Dirichlet problem in \eqref{problem}, we consider the family of problems 
\begin{align}
\label{problem2}
P_{\e}(x,D)u_{\e} & = f_{\e}(x,u_{\e}) \quad \text{on $\Om$},\\ 
u_{\e}  &= \rho_{\e} \quad \text{on $\partial\Om$},\nonumber
\end{align}
where $f_{\e}, h_{\e}$, and $P_{\e}(x,D)$ are obtained by convolving the data and coefficients of 
\eqref{problem} with a certain mollifier.  Thus a solution to the problem in a Colombeau algebra is a net of solutions to the 
above family satisfying certain growth estimates in $\e$.  This is discussed in detail in Sections \ref{Colombeau} and \ref{netsofproblems}. 
This basic concept underlies both the solution process in our paper and in \cite{MP06} and \cite{PS06}.
However it is our solution process
in the Colombeau algebra that is quite distinct from that laid out 
in~\cite{MP06}, where the authors used linear elliptic theory to determine
a family of solutions and then classical elliptic, {\em a priori} estimates to prove certain growth estimates.  Most notably, the authors developed
a precise maximum principle-type argument necessary to obtain polynomial growth estimates required to find a solution.  Our strategy for solving~\eqref{problem}
differs in a number of ways.  First, in Section \ref{bounds} we develop a family of { \em a priori} $L^{\infty}$ bounds to the family of problems \eqref{problem2}.
Then in Section \ref{supersolution} we show that these estimates determine sub- and super-solutions to \eqref{problem2}.
We then employ the method of sub- and super-solutions in Section \ref{subsolution} to determine a family of solutions.  
Finally, $\e$-growth estimates on the sub- and super-solutions are established in Section \ref{supersolution}, 
and in Section \ref{results} these estimates are used in conjunction with the {\em a priori}
estimates in Section \ref{holder} to prove the necessary $\e$-growth estimates on our family of solutions.

This paper can be broken down into two distinct, but related parts.  The first part is dedicated to solving 
\eqref{eq3:25oct11}. 
Our solution to this problem does not explicitly require the techniques that we develop to solve problems with distributional 
data in Colombeau algebras and only relies on standard elliptic PDE theory.  However, the ideas that we use to solve the problem are closely related:    
we obtain a solution by solving a family of problems similar to \eqref{problem2} and then show that these solutions
converge to a function in $H^{1}(\Om)$.  Therefore, we present our existence result for \eqref{eq3:25oct11} 
first to convey the benefit that the more general Colombeau solution strategy has, not only for solving problems
in the Colombeau Algebra, but also for obtaining solutions in more classical spaces.  The remainder
of the paper is dedicated to developing the Colombeau framework described in the preceding paragraph.  This
consists of defining an algebra appropriate for a Dirichlet problem and properly defining a semilinear elliptic
problem in the algebra.  Once a well-posed elliptic problem in the Colombeau algebra has been formed, 
we discuss the conditions under which the problem has a solution in the algebra
and finally, describe how to translate a given problem of the form \eqref{problem}
into a problem that can be solved in the algebra.  It should be noted that while the intention is
to find solutions to \eqref{problem}, the main result pertaining to Colombeau algebras
in this paper is Theorem~\ref{thm1june27}, which is the main solution result
for semilinear problems in our particular Colombeau algebra. 

{\em Outline of the paper.}
The remainder of the paper is structured as follows.
In Section~\ref{Example1} we motivate this article by proving the 
existence of a solution to \eqref{eq3:25oct11}.
In Section~\ref{prelim} we state a number of preliminary results and develop 
the technical tools required to solve~\eqref{problem}.
Among these tools and results are the explicit {\em a priori} estimates 
found in~\cite{MP06} and a description of the Colombeau framework in which the 
coefficients and data will be embedded.
Then in Section~\ref{overview} we state the main existence result in 
Theorem~\ref{thm1june27}, give a statement and proof of the method of 
sub- and super solutions in Theorem~\ref{thm2june27}, and then give an 
outline of the method of proof of Theorem~\ref{thm1june27}.
Following our discussion of elliptic problems in Colombeau algebras, we 
discuss a method to embed \eqref{problem} into the algebra to apply our 
Colombeau existence theory.
The remainder of the paper is dedicated to developing the tools to prove 
Theorem~\ref{thm1june27}.
In Section~\ref{bounds1} we determine {\em a priori} $L^{\infty}$ bounds of 
solutions to our semilinear problem and a net of sub- and super-solutions 
satisfying explicit $\e$-growth estimates.
Finally, in Section~\ref{results} we utilize the results from 
Section~\ref{bounds1} to prove the main result outlined in 
Section~\ref{overview}.

\section{Solution Construction using a Sequence of Approximate Problems}\label{Example1}
 If $\Om \subset \mathbb{R}^3$, the 
Sobolev embedding theorem tells us that $H^{1}(\Om)$ will compactly embed 
into $L^p(\Om)$ for $1 \le p < 6$ and continuously embed for $1 \le p \le 6$. 
Given functions  
$u,v \in H^{1}(\Om)$, this upper bound on $p$ places a constraint on the 
values of $i$ that allow for the product
$u^iv$ to be integrable.
In particular, Sobolev embedding and standard H\"{o}lder inequalities imply
that this product will be integrable for arbitrary elements of
$H^{1}(\Om)$ only if $1 \le i \le 5$.
More generally, if $a \in L^{\infty}(\Om)$, the term
$au^5v$ will also be integrable.
However, if $a$ is an unbounded function in $L^p(\Om)$
for some $p \ge 1$, then this product is not necessarily integrable without 
some sort of {\em a priori} bounds on $a, u$, and $v$.
Therefore, the following problem does not have
a well-defined weak formulation in $H^{1}(\Om)$:
\begin{align}\label{eq1:24oct11}
-\Delta u +au^m&+bu^i = 0 \quad \text{in $\Om$},\\
&u = \rho \quad \text{on $\partial\Om$} \nonumber,
\end{align}
where $\Om \subset\subset \Om'$, $m \ge 5$, $1 \le i \le 4$ are in $\mathbb{N}$, $\rho \in H^{1}(\Om')$, $b \in L^{\infty}(\Om')$ 
and $a \in L^p(\Om')$ for $\frac65 \le p < \infty$.

The objective of this section is to find a solution to the above problem.  
In order to solve \eqref{eq1:24oct11}, we solve a sequence of approximate, 
smooth problems and use a compactness argument to obtain a convergent 
subsequence.  
We first define necessary 
notation and then present the statements of two theorems that will be necessary for
our discussion in this section.  Then we prove the existence of a solution to
\eqref{eq1:24oct11}.
Finally, we show that if a solution exists, then under certain
conditions we can construct a net of problems whose solutions converge to the 
given solution.
    
\subsection{Overview of Spaces and Results for the Critical Exponent Problem}\label{prelim1}

For the remainder of the paper, for a fixed domain 
$\Om\subset \mathbb{R}^n$, we denote
the standard Sobolev norms on $\Om$ by
\begin{align}\label{Sobolev}
&\|u\|_{L^p} = \left( \int_{\Om} |u(x)|^p~dx\right)^{\frac1p},\\
&\|u\|_{W^{k,p}} = \left(\sum_{i=1}^k \|D^iu\|^p_{L^p}\right)^{\frac1p} \nonumber.
\end{align}
For the special case that $p=2$, we let $H^k(\Om) = W^{k,2}(\Om)$ and $H^1_0(\Om)$
denote the functions in $H^1(\Om)$ that have trace zero.
Furthermore, let 
\begin{align}
&\text{ess sup $u$} = \hat{u},\\
&\text{ess inf $u$} = \check{u}.\nonumber
\end{align}

In our subsequent work we will also require regularity conditions 
on the domain $\Om$ and its boundary.  Therefore, we will need the following definition taken from \cite{GiTr77}:

\begin{definition}
A bounded domain $\Om \subset \mathbb{R}^n$ and its boundary are of class $C^{k,\alpha}$, $0 \le \alpha \le1$,
if for each $x_0 \in \partial\Om$ there is a ball $B(x_0)$ and a one-to-one mapping $\Psi$ of $B$ onto $D \subset \mathbb{R}^n$
such that:
\begin{enumerate}
\item $\Psi(B \cap \Om) \subset \mathbb{R}^n_+$,
\item $\Psi(B \cap \partial\Om) \subset \partial\mathbb{R}^n_+$,
\item $\Psi \in C^{k,\alpha}(B), ~~~ \Psi^{-1} \in C^{k,\alpha}(D). $
\end{enumerate}
\end{definition}
We say that a domain $\Om$ is of class $C^{\infty}$ if for a fixed 
$0 \le \alpha \le 1$ it is of class $C^{k,\alpha}$
for each $k \in \mathbb{N}$.
Additionally, for this section and the next we will require the following Theorem and Proposition:
\begin{theorem}
\label{thm1:24oct11}
Suppose $\Om \subset \mathbb{R}^n$ is a $C^{\infty}$ domain and assume
$f:\overline{\Om} \times \mathbb{R}^+\to \mathbb{R}$ 
is in
$C^{\infty}(\overline{\Om}\times\mathbb{R}^+)$ and $\rho \in C^{\infty}(\overline{\Om})$.  Let $L$ be
an elliptic operator of the form
\begin{align}
Lu = -D_i(a^{ij}D_{j}u) + cu, \quad\text{and}\quad a^{ij}, c \in C^{\infty}(\overline{\Om}).
\end{align}
Suppose that there exist sub- and super-solutions 
$u_-:\overline{\Om} \to\mathbb{R}$ and $u_+:\overline{\Om}\to \mathbb{R}$ 
such that the following hold:
\begin{enumerate}
\item $u_-,u_+ \in C^{\infty}(\overline{\Om})$,
\item $0<u_-(x) < u_+(x) \hspace{3mm}\forall x\in \overline{\Om}.$ 
\end{enumerate}
Then there exists a solution $u\in C^{\infty}(\overline{\Om})$ to 
\begin{align}
\label{eq3june30-relabled-by-mjh}
Lu &= f(x,u) \hspace{3mm} \text{on $\Om$},\\
& u = \rho \quad \text{on $\partial\Om$}, \nonumber
\end{align}
such that $u_-(x)\le u(x) \le u_+(x). $
\end{theorem}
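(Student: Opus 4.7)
The plan is to construct $u$ via the classical monotone iteration scheme, exploiting the fact that $f$ restricted to the compact rectangle $R := \overline{\Omega} \times [\check{u}_-, \hat{u}_+] \subset \overline{\Omega}\times(0,\infty)$ is smooth with bounded partial derivatives. First I would choose a constant $K>0$ large enough that the shifted nonlinearity $(x,u) \mapsto Ku + f(x,u)$ is non-decreasing in $u$ on $R$ (which is possible since $\partial_u f$ is bounded on the compact set $R$) and simultaneously that $K + c(x) > 0$ on $\overline{\Omega}$. With this choice the shifted operator $L + K$ is a smooth, uniformly elliptic operator with nonnegative zeroth-order coefficient, so it satisfies the classical maximum principle and is invertible for the Dirichlet problem on $\Omega$.

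Next I would set $u_0 := u_+$ and define $u_{k+1}$ inductively as the unique classical solution of the linear Dirichlet problem
\begin{equation*}
(L+K)u_{k+1} = Ku_k + f(x,u_k) \ \text{in } \Omega, \qquad u_{k+1} = \rho \ \text{on } \partial\Omega.
\end{equation*}
Because all data and coefficients are $C^\infty(\overline{\Omega})$ and $\Omega$ is a $C^\infty$ domain, Schauder theory gives $u_{k+1} \in C^\infty(\overline{\Omega})$. An induction on $k$ using the super-solution inequality $Lu_+ \geq f(x,u_+)$, the sub-solution inequality $Lu_- \leq f(x,u_-)$, the monotonicity of $Ku+f(x,u)$ in $u$, and the maximum principle for $L+K$ then yields the key chain
\begin{equation*}
u_-(x) \leq \cdots \leq u_{k+1}(x) \leq u_k(x) \leq \cdots \leq u_+(x) \qquad \text{for all } x \in \overline{\Omega}.
\end{equation*}
In particular every iterate remains in the rectangle $R$, which is what permits the monotonicity of the shifted nonlinearity to be used at each step.

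With the chain established, the pointwise limit $u(x) := \lim_{k\to\infty} u_k(x)$ exists and satisfies $u_- \leq u \leq u_+$. Uniform $L^\infty$ bounds on the right-hand sides $Ku_k + f(x,u_k)$ (immediate from $u_k \in R$ and smoothness of $f$) give uniform $C^{\alpha}$ bounds on the right-hand sides, hence uniform $C^{2,\alpha}(\overline{\Omega})$ bounds on $u_k$ via Schauder estimates up to the boundary. Arzel\`a--Ascoli then provides a subsequence converging in $C^2(\overline{\Omega})$, so I may pass to the limit in both the PDE and the boundary condition to conclude that $u \in C^2(\overline{\Omega})$ solves $Lu = f(x,u)$ in $\Omega$ with $u = \rho$ on $\partial\Omega$. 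Since $u \geq u_- > 0$ on $\overline{\Omega}$, the nonlinearity $f(\cdot,u(\cdot))$ is smooth, and standard elliptic bootstrapping (differentiating the equation and reapplying Schauder) upgrades $u$ to $C^\infty(\overline{\Omega})$.

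The main obstacles are twofold. First, because $f$ is only defined for positive values of $u$, the iteration must be kept strictly inside $(0,\infty)$; this is precisely what the strict positivity $u_- > 0$ on the compact set $\overline{\Omega}$ and the inductive comparison $u_k \geq u_-$ secure. Second, the choice of $K$ must simultaneously make $L+K$ satisfy a maximum principle and make $Ku + f(x,u)$ monotone on $R$; compactness of $R$ together with the smoothness of $f$ and $c$ allow both requirements to be met by a single sufficiently large constant, so this is a matter of bookkeeping rather than a genuine analytic difficulty.
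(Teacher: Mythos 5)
Your proposal is essentially the paper's own proof: the same shifted monotone iteration (with $K$ chosen so that $Ku+f(x,u)$ is monotone on the compact rectangle and $L+K$ obeys the maximum principle), followed by uniform elliptic estimates, Arzel\`a--Ascoli, passage to the limit, and bootstrapping to $C^{\infty}(\overline{\Om})$ -- the only cosmetic differences being that you iterate downward from $u_+$ while the paper iterates upward from $u_-$, and you invoke Schauder theory where the paper uses Lax--Milgram plus $L^p$ regularity. One sentence to tighten: an $L^{\infty}$ bound on $Ku_k+f(x,u_k)$ does not by itself yield a $C^{\alpha}$ bound on that right-hand side; first apply $L^p$ (or $W^{2,p}$) estimates to the iterates to get uniform $C^{1,\alpha}$ bounds, after which the uniform $C^{\alpha}$ bound on the right-hand side, and hence the uniform $C^{2,\alpha}$ bound via Schauder, follow as you intend.
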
  

\begin{proposition}
\label{prop2:24oct11}
Let $u$ be a solution to a semilinear equation of the form
\begin{align}\label{eq1:24nov11}
-\sum_{i,j}^ND_i (a^{ij} D_j u) &+ \sum_{i=1}^K b^iu^{n_i}=0 \text{  in  $\Om$}, \\
u  &= \rho, \quad \rho(x) > 0\hspace{2mm} \text{on} \hspace{2mm}\partial\Om \nonumber
\end{align}
where $a^{ij}, b^i$ and $\rho \in C^{\infty}(\ol{\Om}) $.   
Suppose that the semilinear operator in \eqref{eq1:24nov11} has the property that
$n_i>0$ for all 
$1 \le i \le K$.  Let $n_K$ be the largest positive exponent and 
suppose that $b^K(x)>0$ in $\overline{\Om}$. 
Define
\begin{align}\label{eq3:24oct11}
&\beta' = \inf_{c\in\mathbb{R}} \left\{\sum_{i=1}^K\inf_{x\in\ol{\Omega}}b^i(x)y^{n_i}> 0 \hspace{3mm}\forall y\in (c,\infty)\right\},\\
&\beta = \max\{\beta', \sup_{x\in \partial\Om} \rho(x)\}.
\end{align}

\noindent
Then if $u \in H^1(\Om)$ is a positive weak solution to Eq. \eqref{eq1:24nov11},
it follows that $ 0 \le u \le \beta < \infty$.
\end{proposition}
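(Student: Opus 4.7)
The plan is to establish $u \le \beta$ by a truncation-and-testing argument that exploits the sign structure of the nonlinearity for $y > \beta'$. Set $v := (u-\beta)_+ = \max\{u-\beta,0\}$. Because $u$ has trace $\rho \le \sup_{x\in\partial\Om}\rho(x) \le \beta$ on $\partial\Om$, the positive part $v$ lies in $H^1_0(\Om)$ by the standard chain rule for truncations in $H^1$, and is therefore admissible as a test function in the weak formulation of \eqref{eq1:24nov11}. Finiteness of $\beta$ follows from the fact that $\inf_{\ol\Om} b^K > 0$ (continuity of $b^K$ on the compact set $\ol\Om$) and $n_K$ is the largest positive exponent: these conditions force $b^K(x)y^{n_K}$ to dominate the other terms $b^i(x) y^{n_i}$ uniformly in $x$ for all sufficiently large $y$, so the set over which the infimum in \eqref{eq3:24oct11} is taken is nonempty.

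Plugging $v$ into the weak formulation yields
$$\int_\Om a^{ij}(x)\, D_j u\, D_i v \,dx + \int_\Om \sum_{i=1}^K b^i(x)\, u^{n_i}\, v \,dx = 0.$$
The pointwise identity $D_i v = D_i u \cdot \chi_{\{u > \beta\}}$ rewrites the principal part as $\int_\Om a^{ij} D_j v\, D_i v \,dx$, which is non-negative by the uniform ellipticity of $(a^{ij})$. For the zero-order term, observe that on the support $\{v > 0\} = \{u > \beta\}$ one has $u(x) > \beta \ge \beta' $ and also $u(x) > 0$ (since $\beta \ge \sup_{\partial\Om}\rho > 0$). The pointwise bounds $b^i(x) \ge \inf_{z\in\ol\Om} b^i(z)$, together with $u(x)^{n_i} > 0$, give
$$\sum_{i=1}^K b^i(x)\, u(x)^{n_i} \;\ge\; \sum_{i=1}^K \Bigl(\inf_{z\in\ol\Om} b^i(z)\Bigr) u(x)^{n_i} \;>\; 0$$
on the support of $v$, by the defining property of $\beta'$. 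Hence the zero-order integral is also non-negative.

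Since a sum of two non-negative quantities vanishes, each must be separately zero. Uniform ellipticity then forces $Dv \equiv 0$ almost everywhere, and since $v \in H^1_0(\Om)$ the Poincar\'e inequality gives $v \equiv 0$; that is, $u \le \beta$ a.e. The bound $u \ge 0$ is automatic from the hypothesis that $u$ is a positive solution, which completes the proof. The conceptual hard point is arranging the principal and zero-order integrals to carry the same sign, which is precisely what the definition of $\beta'$ is engineered to achieve; the only technical subtlety is verifying that $v=(u-\beta)_+$ genuinely belongs to $H^1_0(\Om)$, which follows from the standard truncation result together with the boundary estimate $\rho \le \beta$.
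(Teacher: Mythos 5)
Your proposal is correct and follows essentially the same route as the paper: testing the weak formulation with $(u-\beta)^+\in H^1_0(\Om)$, using the definition of $\beta'$ to get the sign of the zero-order term on $\{u>\beta\}$, and concluding via ellipticity and the Poincar\'e inequality that $(u-\beta)^+\equiv 0$. The only cosmetic difference is that the paper packages the ellipticity and Poincar\'e steps into a single chain of inequalities rather than splitting the identity into two separately non-negative terms.
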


For the proof of Theorem~\ref{thm1:24oct11}, see Section~\ref{subsolution}. A more detailed version of Proposition~\ref{prop2:24oct11} and its
proof can be found in Section~\ref{bounds1}.  Now that we have all of the tools we need, we shall now prove the existence of a solution
to a problem of the form \eqref{eq1:24oct11}. 

\subsection{Existence of a Solution to an Ill-Posed Critical Exponent Problem}\label{example2}
For the following discussion, let $\Om' \subset \mathbb{R}^3$ be an open and bounded domain
and assume that $\Om \subset\subset \Om'$ is also open and of $C^{1,\alpha}$-class.

Here we seek a weak solution $u\in H^{1}(\Om)$ to the problem
\begin{align}\label{eq2:19oct11}
-\Delta u +au^m&+bu^i =0 \quad \text{in $\Omega$},\\
u & = \rho \quad \text{on $\Om$}, \nonumber
\end{align}
where $m \ge 5$, $1 \le i \le 4$ are in $\mathbb{N}$,
\begin{align}\label{eq1:21oct11}
b \in L^{\infty}(\Om'), 
\quad
 a \in L^p(\Om'), 
\quad
\frac65 \le p <\infty, 
\quad
\rho \in H^{1}(\Om') ,
\end{align} 
and
\begin{align}\label{eq2:21oct11}
\check{a} >0, \quad \hat{b} <0, \quad \text{and} \quad \check{\rho} >0.
\end{align}

If our test function space is $H^{1}_0(\Om)$, Eq. \eqref{eq2:19oct11} is ill-posed due to the
term $au^m$.  The weak formulation of Eq. \eqref{eq2:19oct11} would contain the integral
$$
\int_{\Om} au^mv~dx,
$$
where $v\in H^{1}(\Om)$, $a\in L^p(\Om)$, and $u\in H^{1}(\Om)$.  For these choices of function spaces
this integral need not be finite.  We show that this problem does in fact have a weak solution by regularizing the coefficients
of our problem and solving a sequence of approximating problems.  We obtain the following proposition.

\begin{proposition}\label{prop1:8nov11}
The semilinear problem \eqref{eq2:19oct11} has a solution $u\in H^{1}(\Om)$ if $a, b,\text{and} ~\rho$ satisfy the conditions in \eqref{eq1:21oct11} and \eqref{eq2:21oct11}.
\end{proposition}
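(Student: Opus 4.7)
The plan is to regularize the data, invoke Theorem~\ref{thm1:24oct11} on a net of smooth approximating problems while keeping sub- and super-solutions \emph{uniform} in the regularization parameter, and then extract a weak $H^1$-limit via compactness. The uniform $L^{\infty}$ bound produced by this strategy (which is essentially the content of Proposition~\ref{prop2:24oct11}) is precisely what makes the supercritical term $au^m$ tractable, since $H^1$-control alone fails for $m \geq 5$ in three dimensions.

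First I fix a standard mollifier $\eta_\e$ and, exploiting $\Omega \subset\subset \Omega'$, set $a_\e = \eta_\e * a$, $b_\e = \eta_\e * b$, $\rho_\e = \eta_\e * \rho$, which are smooth on $\overline{\Omega}$ for $\e$ small. Because $\eta_\e \geq 0$ has unit mass, the sign hypotheses propagate uniformly: $\check a_\e \geq \check a > 0$, $\hat b_\e \leq \hat b < 0$, $\check \rho_\e \geq \check \rho > 0$. Next I construct constant sub- and super-solutions that are independent of $\e$. The choice $u_- \equiv \delta$ reduces the sub-solution inequality to $\check a\, \delta^{m-i} \leq -\hat b$, so any $\delta \in (0, \min\{\check\rho,\,(-\hat b / \check a)^{1/(m-i)}\})$ works; the choice $u_+ \equiv M$ large handles the super-solution (modifying additively by a harmonic majorant of $\rho_\e$ if $\rho$ fails to be bounded above). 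Theorem~\ref{thm1:24oct11} then produces $u_\e \in C^\infty(\overline\Omega)$ solving the regularized Dirichlet problem with $0 < \delta \leq u_\e \leq M$ uniformly in $\e$.

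Testing the regularized PDE against $u_\e - \rho_\e \in H^1_0(\Omega)$ and combining the uniform $L^\infty$ bound with $\|\rho_\e\|_{H^1} \leq C$ yields $\|u_\e\|_{H^1} \leq C$ independently of $\e$. By Banach--Alaoglu and Rellich--Kondrachov I extract $u_{\e_k} \rightharpoonup u$ in $H^1(\Omega)$, with $u_{\e_k} \to u$ strongly in $L^q(\Omega)$ for every $q < 6$ and pointwise a.e., the limit still satisfying $\delta \leq u \leq M$. Since $u_\e$ is uniformly bounded in $L^\infty$, dominated convergence promotes this to $u_{\e_k}^m \to u^m$ and $u_{\e_k}^i \to u^i$ strongly in every $L^q(\Omega)$, $q < \infty$.

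The main obstacle is passing to the limit in the critical nonlinearity $\int_\Omega a_\e u_\e^m v\,dx$ against a test $v \in H^1_0(\Omega)$, and this is precisely where the hypothesis $p \geq 6/5$ is needed. Writing
\[
a_\e u_\e^m v - a u^m v = (a_\e - a)\, u_\e^m v + a (u_\e^m - u^m) v ,
\]
the first piece is controlled by $\|a_\e - a\|_{L^p}\|u_\e^m\|_{L^\infty}\|v\|_{L^{p'}}$, which is finite since $\tfrac{1}{p} + \tfrac{1}{6} \leq 1$ gives $p' \leq 6$, hence $v \in L^{p'}$ by Sobolev embedding; the second piece vanishes by dominated convergence with integrable envelope $M^m |a v| \in L^1(\Omega)$ via the same H\"older pairing. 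The $\int b_\e u_\e^i v$ term is easier since $b \in L^\infty$. Finally, $u_\e - \rho_\e \in H^1_0(\Omega)$ and the weak closedness of $H^1_0$ give $u - \rho \in H^1_0(\Omega)$, recovering the Dirichlet condition in the trace sense and completing the proof.
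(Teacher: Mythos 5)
Your overall strategy is the paper's: mollify, solve the smooth problems between sub- and super-solutions via Theorem~\ref{thm1:24oct11}, keep a uniform $L^{\infty}$ bound, and pass to the limit in the critical term by the same splitting $(a_\e-a)u_\e^m v + a(u_\e^m-u^m)v$ with H\"older ($p'\le 6$) and dominated convergence. However, the sub-solution step as written would fail. For $u_-\equiv\delta$ the requirement is $a_\e(x)\delta^{m}+b_\e(x)\delta^{i}\le 0$ at \emph{every} $x\in\overline{\Om}$, i.e.\ $\delta^{m-i}\le \inf_{x\in\overline{\Om}}\bigl(-b_\e(x)/a_\e(x)\bigr)$, so the relevant quantity is $\sup_{\overline{\Om}}a_\e$, not $\check{a}$; your reduction ``$\check{a}\,\delta^{m-i}\le-\hat{b}$'' has the inequality running the wrong way. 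Since $a$ is only in $L^p(\Om')$ and may be unbounded (the case the proposition is designed for, e.g.\ $a(x)=|x|^{-2}\in L^{6/5}$ near an interior point, for which $\sup a_\e\sim\e^{-2}$), there is in general \emph{no} $\e$-independent constant $\delta>0$ that is a sub-solution, and the asserted uniform bound $\delta\le u_\e$ is false. The error is localized: because $m,i$ are positive integers, $u_-\equiv 0$ is a sub-solution for every $\e$ (this is exactly what the paper uses), and the strictly positive lower bound is never needed later—only the uniform upper bound matters for the supercritical term. A second soft spot is the super-solution: the constant $M$ must dominate $\sup_{\partial\Om}\rho_\e$ uniformly in $\e$, which requires $\rho$ to be essentially bounded (the paper's $\beta$ carries the same implicit assumption through $\hat{\rho}$); your parenthetical harmonic-majorant repair is not compatible with the uniform bound $u_\e\le M$ you invoke immediately afterwards, since the majorant's supremum is precisely the quantity $\sup_{\partial\Om}\rho_\e$ that may blow up, and without a uniform $L^\infty$ bound the envelope $M^m|av|$ in your dominated-convergence step disappears.

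Apart from these points, your compactness mechanism is a genuine and legitimate departure from the paper, and it buys something. The paper bounds $\|u_n\|_{W^{2,p}}$ by $L^p$ elliptic regularity and uses the \emph{compact} embedding $W^{2,p}(\Om)\hookrightarrow H^1(\Om)$—which requires $p>6/5$ strictly—to obtain strong $H^1$ convergence before passing to the limit. You instead get a uniform $H^1$ bound by testing with $u_\e-\rho_\e$, extract only a weak $H^1$ limit, and use Rellich plus a.e.\ convergence and the uniform $L^\infty$ bound; this is enough for your limit passage (the gradient term only needs weak convergence, and the nonlinear terms only need a.e.\ convergence with an $L^1$ envelope), it is more elementary, and it even covers the endpoint $p=6/5$ where the paper's compactness step is unavailable. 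The identification of the boundary condition via weak closedness of $H^1_0(\Om)$ and $\rho_\e\to\rho$ in $H^1(\Om)$ is fine. With the sub-solution replaced by $0$ (or by an $\e$-dependent positive constant) and with $\rho$ assumed essentially bounded, as the paper implicitly does, your argument goes through.
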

\begin{proof}
To determine a solution to \eqref{eq2:19oct11}, we consider the sequence of solutions to the 
approximate problems
\begin{align}\label{eq1:19oct11}
-\Delta u_n +a_n(u_n)^m+b_n(u_n)^i &=0 \quad \text{in $\Omega$},\\
u_n = \rho_n \quad \text{on $\partial\Om$}, \nonumber 
\end{align}
where $a_n= a\ast \phi_n$, $b_n= b\ast \phi_n$, and $\rho_n = \rho\ast \phi_n$ and
$\phi_n = n^3\phi(nx)$ is a positive mollifier where $\int \phi(x)~dx = 1$.
Given that $\phi$ is a positive mollifier, it is clear that for each $n\in \mathbb{N}$,
$$
\check{a}_{n}>0, ~~~\hat{b}_n < 0~~~~ \text{and} \quad \check{\rho}_n>0.
$$
We first verify that the sequence of problems \eqref{eq1:19oct11} has a solution
for each $n$.  To do this, we will utilize Theorem~\ref{thm1:24oct11} and Proposition~\ref{prop2:24oct11}. 
Let $\beta_n$ have the same properties as $\beta$ in Proposition \ref{prop2:24oct11}
for the sequence of problems \eqref{eq1:19oct11}.  
Then using the notation in Proposition \ref{prop2:24oct11}, we can write explicit expressions for
$ \beta$ for \eqref{eq2:19oct11} and $\beta_n$.  It is not hard
to show that 
$$
\beta = \max\left\{\left(-\frac{\check{b}}{\check{a}}\right)^{\frac{1}{m-i}}, \hat{\rho}\right\},
$$
and
$$
\beta_n'= \left(-\frac{\check{b}_n}{\check{a}_n}\right)^{\frac{1}{m-i}}, \quad \beta_n = \max\left\{\beta_n', \hat{\rho}_n\right\}.
$$
By Proposition~\ref{prop2:24oct11}, for each $n \in \mathbb{N}$, $\beta_n$ determines an 
{\em a priori} upper bound for the approximate problems. 
Furthermore, it is not difficult to see that for each $n\in\mathbb{N}$ that
$0 $ and $\beta_n$ are sub- and super-solutions for \eqref{eq1:19oct11}.
See Section~\ref{supersolution} and Theorem~\ref{thm2june27} for more details. 
Therefore Theorem~\ref{thm1:24oct11} implies that for $n$ sufficiently large $0 \le u_n\le \beta_n$ is a solution in $ C^{\infty}(\overline{\Om})$ to \eqref{eq1:19oct11}
given that $\rho_n, a_n, b_n \in C^{\infty}(\overline{\Om})$ for $n$ sufficiently large.

Now observe that for each $n\in \mathbb{N}$, $\beta_n \le \beta$, which follows from the fact that
\begin{align}\label{eq1:22oct11}
-b_{n}(x) = \int (-b(y)) \phi_{n}(x-y)~dy \le \int (-\check{b})\phi_{n}(x-y)  = -\check{b},
\end{align}
and $a_{n}(x) \ge \check{a}$, which is verified by a similar calculation.  Therefore,  
by standard $L^p$ elliptic regularity theory 
\begin{align}
\|u_n\|_{W^{2,p}} \le& C( \|-a_n(u_n)^m-b_n(u_n)^i\|_{L^p} + \|u_n\|_{L^p}) \\
\le &C(\beta_n^m\|a_n\|_{L^p}+\beta_n^i\|b_n\|_{L^p}+\beta_n)< M < \infty, \nonumber
\end{align}
where $M$ is independent of $n$ given that $\beta_n \le \beta$, $a_n \to a$ in $L^p$, $b_n \to b$ in $L^p$.  
Because $p > \frac65$ and $\Om$ is of $C^{1,\alpha}$-class, $W^{2,p}(\Om)$ embeds compactly into $H^{1}(\Om)$. 
Therefore, there exists a convergent subsequence $u_{n_j} \to u $ in $H^{1}(\Om)$. 
We claim now that $u$ satisfies the following two properties:
\begin{enumerate}
\item $0 \le u \le \beta$ almost everywhere,
\item $u$ weakly solves \eqref{eq2:19oct11}.
\end{enumerate}
The inequality $0 \le u \le \beta~a.e.$ follows from the fact the $u_{n_j} \to u$ in $H^{1}(\Om)$ and 
$$
0  \le u_{n_j} \le \beta_{n_j} \le \beta \quad \text{for each $j \in \mathbb{N}$}.
$$
Indeed, if we assume that $u>\beta$ on some set of nonzero measure, then for some $n$ the set $A_n =\{x\in \Om: u(x) > \beta+\frac1n\}$ has positive measure.
Then for all $j\in \mathbb{N}$, we have that
$$
\int |u_{n_j} - u|^2~dx \ge \int_{A_n} |u_{n_j}-u|^2~dx \ge \frac{1}{n^2} \mu(A_n) > 0.
$$
But this clearly contradicts the fact that $u_{n_j} \to u$ in $H^{1}(\Om)$.  A similar argument 
shows that $ u \ge 0, ~a.e$ in $\Omega$.

Finally, we want to show that $u$ weakly solves \eqref{eq2:19oct11}.  Let $\e >0$.  Then for any $v\in H^{1}_0(\Om)$ we have
that
\begin{align}
& \left|\int \left( \nabla u\cdot\nabla v + au^mv + bu^i v\right)~ dx\right|
= \left|\int \left(\nabla u\cdot\nabla v + au^mv + bu^i v\right)~ dx \right. \\
& \qquad \qquad \qquad \qquad \left.
  - \int \left(\nabla u_{n_j}\cdot\nabla v + a_{n_j}(u_{n_j})^mv + b_{n_j}(u_{n_j})^i v\right)~ dx\right|, \nonumber
\end{align}
given that $u_{n_j}$ solves \eqref{eq1:19oct11}.
Then expanding the second line of the above equation we find that
\begin{align}
\label{eq3:19oct11}
&\left|\int \nabla u\cdot\nabla v + au^mv + bu^i v~ dx\right| \\
& \qquad \le \int \left|\nabla u\cdot\nabla v- \nabla u_{n_j}\cdot\nabla v\right|~dx+\int\left| au^mv-  a_{n_j}(u_{n_j})^mv\right|~dx \nonumber \\
& \qquad \qquad + \int\left|bu^i v- b_{n_j}(u_{n_j})^i v\right|~ dx  \\
& \qquad \le \int \left|\nabla u\cdot\nabla v- \nabla u_{n_j}\cdot\nabla v\right|~dx +\int\left| au^mv-a(u_{n_j})^mv\right|~dx  \nonumber \\
& \qquad \qquad +\int\left|a(u_{n_j})^mv-  a_{n_j}(u_{n_j})^mv\right|~dx+ \int\left|bu^i v-b(u_{n_j})^iv\right|~dx \nonumber \\
& \qquad \qquad + \int \left|b(u_{n_j})^iv- b_{n_j}(u_{n_j})^i v\right|~ dx.
\label{eq3:19oct11-b}
\end{align}
Every term in 
\eqref{eq3:19oct11-b} tends to $0$ given that $u_{n_j} \to u$ in $H^{1}(\Om)$, 
$a_{n_j} \to a$ in $L^p(\Om)$, $b_{n_j} \to b$ in $L^p(\Om)$ and $0 \le u \le \beta$.  To show that the expression
$$
\int\left| au^mv-a(u_{n_j})^mv\right|~dx \to 0,
$$
we apply H$\ddot{\text{o}}$lder's inequality to obtain
$$
\int\left| au^mv-a(u_{n_j})^mv\right|~dx  \le \| a\|_{L^{\frac65}} \|u^mv-u_{n_j}^mv\|_{L^6}.
$$
Given that $u_{n_j} \to u $ in $H^{1}(\Om)$, $u_{n_j} \to u~~a.e$, where we pass to a subsequence if 
necessary.  Therefore $u_{n_j}^mv \to u^mv~~a.e$.  Finally, we observe that
$$
|u^m-u_{n_j}^m|^6|v|^6 \le 64\beta^{6m}|v|^6,
$$
and given that $v \in H^{1}(\Om)$ and $\Om$ is bounded, the Dominated Convergence Theorem implies
that $\|u^mv-u_{n_j}^mv\|_{L^6} \to 0$.  Therefore
$$
\int\left| au^mv-a(u_{n_j})^mv\right|~dx \to 0.
$$
We apply a similar argument to show that the lower order terms in Eq. \eqref{eq3:19oct11}
converge to zero and conclude that $u$ is a weak solution to \eqref{eq2:19oct11}.
\end{proof}


\section{Preliminary Material: H\"{o}lder Spaces and Colombeau Algebras}
\label{prelim}

We now begin to develop the Colombeau Algebra framework that will be used 
to solve \eqref{problem}.
We first define H\"{o}lder Spaces and state precise versions of the
classical Schauder estimates given in \cite{MP06}.
The definition of the Colombeau Algebra in which we will be working and these 
classical elliptic regularity estimates make these spaces the most natural 
choice in which to do our analysis.
Therefore we will work almost exclusively with H\"{o}lder spaces for the 
remainder of the paper.
Following our discussion of function spaces, we define the Colombeau algebra 
in which we will work and then formulate an elliptic, semilinear problem 
in this space.

\subsection{Function Spaces and Norms}
\label{holder}
In this paper we will make frequent use of Schauder estimates on H\"{o}lder spaces
defined on an open set $\Om \subset \mathbb{R}^n$.
Here we give notation for the H\"{o}lder norms and then state the regularity estimates that will be used.  

All notation and results are taken from~\cite{GiTr77}.
Assume that $\Om \subset \mathbb{R}^n$ is open, connected and bounded.
Then define the following norms and seminorms:
\begin{align}
	&[u]_{\alpha;\Om} = \sup_{\stackrel{x,y \in \Om}{x \ne y}} \frac{|u(x)-u(y)|}{|x-y|^{\alpha}},\\
	&[u]_{k,0;\Om} = \sup_{|\beta|=k} \sup_{x\in\ol{\Om}}|D^{\beta}u|,\\
	&[u]_{k,\alpha;\Om} = \sup_{|\beta|=k}[D^{\beta}u]_{\alpha;\Om},\\
	&\|u\|_{C^k(\overline{\Om})} =|u|_{k;\Om}= \sum_{j=0}^{k}[u]_{j,0;\Om},\\
	&\|u\|_{C^{k,\alpha}(\overline{\Om})} =|u|_{k,\alpha;\Om} = |u|_{k;\Om}+[u]_{k,\alpha;\Om}.
\end{align}
We interpret $C^{k,\alpha}(\overline{\Om})$ as the subspace of functions $f \in C^k(\overline{\Om})$ such that $f^{(k)}$ is $\alpha$-H\"{o}lder continuous.
Also, we view the subspace $C^{k,\alpha}(\Om)$ as the subspace of functions $f\in C^k(\Om)$ such that $f^{(k)}$ is locally $\alpha-$H\"{o}lder continuous
(over compact sets $K\subset\subset\Om$).

Now we consider the equation
\begin{align}
\label{eq4june29}
Lu = a^{ij}D_{ij}u + b^i uD_iu +cu &=f \quad \text{in $\Om$},\\
u &= \rho \quad \text{on $\partial\Om$},
\end{align}
where $L$ is a strictly elliptic operator satisfying
$$
a^{ij} = a^{ji} \quad \text{and} \quad a^{ij}(x)\xi_i\xi_j \ge \lambda|\xi|^2, \quad x\in \Om, \quad \xi \in \mathbb{R}^n.
$$
The following regularity theorems can be found in~\cite{GiTr77} and~\cite{MP06}.  See~\cite{GiTr77} for proofs.
Note that the constant $C$ in the following theorems has no dependence on $\Lambda$ or $\lambda$.  
\begin{theorem} 
\label{thm1june30}
Assume that $\Om$ is a $C^{2,\alpha}$-class domain in $\mathbb{R}^n$ and that $u\in C^{2,\alpha}(\overline{\Om})$ is
a solution~\eqref{eq4june29}, where $f\in C^{\alpha}(\overline{\Om})$ and $\rho\in C^{2,\alpha}(\overline{\Om})$.  Additionally assume
that 
$$
|a^{ij}|_{0,\alpha;\Om}, |b^i|_{0,\alpha;\Om}, |c|_{0,\alpha;\Om} \le \Lambda.
$$
Then there exists
$C>0$ such that
$$
|u|_{2,\alpha;\Om} \le C\left(\frac{\Lambda}{\lambda}\right)^3(|u|_{0;\Om}+|\rho|_{2,\alpha;\Om}+|f|_{0,\alpha;\Om}).
$$
\end{theorem}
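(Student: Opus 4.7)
The plan is the classical Schauder bootstrap: reduce to zero boundary data, establish an interior estimate by freezing the leading coefficients, derive a companion boundary estimate after flattening $\partial\Omega$, and then patch the two together through a finite partition of unity while carefully tracking the dependence on $\Lambda$ and $\lambda$.

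First I would normalize the problem. Dividing \eqref{eq4june29} through by $\lambda$ rescales the principal part so that the new ellipticity constant is $1$, while the $C^{0,\alpha}$ norms of the coefficients are replaced by $\Lambda/\lambda$. Next, since $\rho\in C^{2,\alpha}(\overline{\Omega})$ and $\partial\Omega\in C^{2,\alpha}$, I can extend $\rho$ to a function $\tilde{\rho}\in C^{2,\alpha}(\overline{\Omega})$ with $|\tilde{\rho}|_{2,\alpha;\Omega}\le C\,|\rho|_{2,\alpha;\Omega}$. Setting $w=u-\tilde{\rho}$, the shifted function solves $Lw=f-L\tilde{\rho}$ with $w|_{\partial\Omega}=0$, and the new right-hand side has $C^{\alpha}$ norm controlled by $|f|_{0,\alpha;\Omega}+C(\Lambda/\lambda)|\rho|_{2,\alpha;\Omega}$. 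This reduces matters to proving the bound for the homogeneous Dirichlet problem.

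For the interior piece I would fix $x_0\in\Omega$ and freeze the top-order coefficients at $x_0$, writing $L = L_0 + (L-L_0)$ where $L_0 u = a^{ij}(x_0)D_{ij}u$. On a small ball $B_r(x_0)\subset\Omega$ the model estimate for the constant-coefficient operator $L_0$ gives a $C^{2,\alpha}$ bound in terms of the $C^{\alpha}$ norm of the right-hand side (this is the Campanato/Korn half of the classical theory). The perturbation $(L-L_0)u$ splits into a top-order piece bounded by $[a^{ij}]_{0,\alpha}\,r^{\alpha}[u]_{2,\alpha;B_r}$ and lower-order pieces bounded by $\Lambda/\lambda$ times $|u|_{1,\alpha;B_r}$. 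Choosing $r$ small enough that the top-order remainder can be absorbed (this is where one power of $\Lambda/\lambda$ enters through the choice of $r$), and then summing over a standard finite covering of any interior compact set, produces a local interior bound of Schauder type with constant a controlled power of $\Lambda/\lambda$.

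For the boundary piece I would work in a chart $\Psi\in C^{2,\alpha}$ flattening $\partial\Omega$ to $\partial\mathbb{R}_+^n$ near each boundary point. Pulling back through $\Psi$ preserves the form of $L$ with coefficients still bounded in $C^{0,\alpha}$ by a multiple of $\Lambda/\lambda$ and still strictly elliptic. On the half-ball I would repeat the freezing argument, using the half-space constant-coefficient Schauder estimate with zero Dirichlet data, and pull back. A finite partition of unity $\{\zeta_k\}$ subordinate to a cover of $\overline{\Omega}$ by interior balls and boundary charts then lets me apply the local estimates to $\zeta_k w$; assembling these yields
\begin{equation*}
|w|_{2,\alpha;\Omega}\le C\bigl(\Lambda/\lambda\bigr)^{k}\bigl(|f|_{0,\alpha;\Omega}+|\rho|_{2,\alpha;\Omega}+|w|_{1,\alpha;\Omega}\bigr).
\end{equation*}
The lower-order term $|w|_{1,\alpha;\Omega}$ is absorbed using the standard interpolation inequality $[w]_{1,\alpha}\le \varepsilon\,[w]_{2,\alpha}+C(\varepsilon)|w|_{0;\Omega}$, converting it into $|w|_{0;\Omega}$ at the cost of another power of $\Lambda/\lambda$. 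Reassembling in terms of $u=w+\tilde{\rho}$ yields the stated estimate.

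The main obstacle is the quantitative accounting that produces exactly the exponent $3$ on $\Lambda/\lambda$: the cube arises from (i) the rescaling to unit ellipticity, (ii) the coefficient-freezing/perturbation step where the ratio governs the admissible ball radius, and (iii) the absorption of the $C^{0,\alpha}$ norms of $b^i$ and $c$ via the interpolation step. Everything else is qualitatively classical, but keeping the constants sharp enough to land on $(\Lambda/\lambda)^{3}$ rather than a generic polynomial requires monitoring each of these three amplifications and no more.
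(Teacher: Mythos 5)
The first thing to say is that the paper does not prove this statement at all: Theorem~\ref{thm1june30} is quoted as a known result, with the qualitative Schauder estimate attributed to~\cite{GiTr77} (Theorem 6.6 there) and the explicit constant $C(\Lambda/\lambda)^3$, with $C$ independent of $\lambda$ and $\Lambda$, attributed to the constant-tracking carried out in~\cite{MP06}. So your attempt to actually prove it goes beyond what the paper does, and your architecture (reduce to zero boundary data, freeze coefficients for an interior estimate, flatten the boundary, patch with a partition of unity, absorb lower-order terms by interpolation) is exactly the classical route that the cited proofs follow. Up to that point the proposal is sound.

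The genuine gap is that the only non-classical content of the statement --- that the constant can be taken to be exactly $C(\Lambda/\lambda)^3$ with $C$ independent of $\lambda,\Lambda$ --- is asserted rather than derived, and your heuristic accounting of the cube does not hold up. Dividing the equation by $\lambda$ shows that the constant depends on the data only through the ratio $\mu=\Lambda/\lambda$, but it contributes no power of $\mu$ by itself, so item (i) of your list is spurious. More seriously, in the freezing step the admissible radius must satisfy roughly $\mu r^{\alpha}\le \tfrac12$, i.e.\ $r\sim\mu^{-1/\alpha}$; if you then cover $\overline{\Om}$ (or a boundary neighborhood) by balls of that radius and sum the local estimates, the patching constants grow like negative powers of $r$, producing factors of order $\mu^{(2+\alpha)/\alpha}$ or $\mu^{n/\alpha}$ --- powers that depend on $n$ and $\alpha$ and are in general larger than $3$. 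The clean exponent $3$ is obtained only by running the argument with the weighted interior norms and seminorms of \cite{GiTr77} (the $|\cdot|^{*}$ machinery of their Theorems 4.6 and 6.2) and tracking each constant explicitly, which is precisely the computation done in~\cite{MP06}; your sketch as written would deliver \emph{some} polynomial dependence on $\mu$, but not the stated cube. To close the gap you would need either to reproduce that bookkeeping in detail or, as the paper does, cite~\cite{MP06} for the explicit constant.
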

\noindent
This theorem can then be extended to higher order derivatives by repeatedly applying Theorem~\ref{thm1june30}.  
See~\cite{MP06} for details.  We summarize this result
in the next theorem.
\begin{theorem}
\label{thm2june30}
Let $\Om$ be a $C^{k+2,\alpha}$-class domain and $u\in C^2({\Om})\cap C^0(\overline{\Om})$ be a solution of~\eqref{eq4june29}, where $f\in C^{k,\alpha}(\overline{\Om})$ and $\rho\in C^{k+2,\alpha}(\overline{\Om})$.
Additionally assume that
$$
|a^{ij}|_{k,\alpha;\Om}, |b^i|_{k,\alpha;\Om}, |c|_{k,\alpha;\Om} \le \Lambda.
$$
Then $u\in C^{k+2,\alpha;\Om}(\overline{\Om})$ and 
$$
|u|_{k+2,\alpha;\Om} \le C^{k+1}\left(\frac{\Lambda}{\lambda}\right)^{3(k+1)}(|u|_{0;\Om}+|\rho|_{k+2,\alpha;\Om}+|f|_{k,\alpha;\Om}),
$$
where $C$ is the constant from Theorem~\ref{thm1june30}.
\end{theorem}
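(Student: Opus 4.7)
\medskip

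\noindent\textbf{Proof plan.} The natural approach is induction on $k$, with Theorem~\ref{thm1june30} providing both the base case ($k=0$) and the engine driving the inductive step. The idea is that differentiating the equation~\eqref{eq4june29} produces an equation of the same structural form (with the same principal and lower-order coefficients, hence the same ellipticity constant $\lambda$ and the same bound $\Lambda$), applied to $D_l u$ rather than $u$, with a new right-hand side that is one order less regular. Each application of Theorem~\ref{thm1june30} costs a multiplicative factor of $C(\Lambda/\lambda)^3$, so after $k+1$ applications one arrives at the claimed constant $C^{k+1}(\Lambda/\lambda)^{3(k+1)}$.

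First I would set up the inductive step carefully. Assume the estimate holds at order $k-1$, and suppose $f\in C^{k,\alpha}(\overline{\Om})$, $\rho\in C^{k+2,\alpha}(\overline{\Om})$, with coefficients bounded in $C^{k,\alpha}$ by $\Lambda$. Applying a tangential derivative $D_l$ to~\eqref{eq4june29} formally yields
\begin{equation*}
L(D_l u) \;=\; D_l f \;-\; (D_l a^{ij})D_{ij}u \;-\; (D_l b^i) D_i u \;-\; (D_l c)\, u \;=:\; \tilde{f}_l,
\end{equation*}
with boundary values $D_l u|_{\partial\Om} = D_l\rho$ (after straightening the boundary in a chart, which is where the $C^{k+2,\alpha}$ regularity of $\partial\Om$ is used). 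The coefficients of $L$ acting on $D_l u$ are unchanged, so their $C^{k-1,\alpha}$ norms are still bounded by $\Lambda$. The key observation is that $|\tilde{f}_l|_{k-1,\alpha;\Om}$ is controlled by $|f|_{k,\alpha;\Om} + \Lambda\,|u|_{k,\alpha;\Om}$, and $|u|_{k,\alpha;\Om}$ is already controlled at the previous inductive level.

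Next I would invoke the inductive hypothesis on $D_l u$ at order $k-1$, obtaining
\begin{equation*}
|D_l u|_{k+1,\alpha;\Om}
\;\leqs\; C^{k}\Bigl(\tfrac{\Lambda}{\lambda}\Bigr)^{3k}
\bigl(|D_l u|_{0;\Om} + |D_l\rho|_{k+1,\alpha;\Om} + |\tilde{f}_l|_{k-1,\alpha;\Om}\bigr).
\end{equation*}
Summing over $l=1,\dots,n$, absorbing $|D_l u|_{0;\Om}\leqs |u|_{1;\Om}$, $|D_l\rho|_{k+1,\alpha;\Om}\leqs |\rho|_{k+2,\alpha;\Om}$, and bounding $|\tilde{f}_l|_{k-1,\alpha;\Om}$ as above, then finally combining with the base case applied once more to pick up $|u|_{0;\Om}$, one arrives at the desired estimate
\begin{equation*}
|u|_{k+2,\alpha;\Om}
\;\leqs\; C^{k+1}\Bigl(\tfrac{\Lambda}{\lambda}\Bigr)^{3(k+1)}
\bigl(|u|_{0;\Om} + |\rho|_{k+2,\alpha;\Om} + |f|_{k,\alpha;\Om}\bigr).
\end{equation*}

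The main obstacle is the bookkeeping near $\partial\Om$: the differentiation argument works directly only for tangential derivatives, so one must either flatten the boundary via the $C^{k+2,\alpha}$ chart maps of Definition~2.1 and track how those changes of variable preserve the bounds (picking up constants depending only on $\Om$, which may be absorbed into $C$), or recover normal derivatives from tangential ones using the equation itself to express $D_{nn}u$ in terms of the remaining second derivatives, $f$, and lower-order terms. Either route produces the same constants up to dimensional factors, and the multiplicative structure $C^{k+1}(\Lambda/\lambda)^{3(k+1)}$ is preserved precisely because the inductive step uses Theorem~\ref{thm1june30} once. This clean exponential dependence on $k$ is what will later be exploited in Section~\ref{results} to obtain the polynomial $\e$-growth estimates required in the Colombeau framework.
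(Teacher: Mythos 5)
Your skeleton---induction on $k$, differentiating the equation, feeding the commutator terms into the right-hand side, and using Theorem~\ref{thm1june30} as the engine---is exactly the route the paper indicates (it gives no proof of Theorem~\ref{thm2june30} beyond ``repeatedly applying Theorem~\ref{thm1june30}'' and pointers to \cite{GiTr77} and \cite{MP06}), and your caution about the boundary is warranted, since $u=\rho$ on $\partial\Om$ only controls tangential derivatives. The genuine problem is the constant bookkeeping, which is the only content of the statement beyond classical Schauder theory. First, a consequential index slip: the term $(D_l a^{ij})D_{ij}u$ in $\tilde f_l$ forces $|\tilde f_l|_{k-1,\alpha;\Om}\lesssim |f|_{k,\alpha;\Om}+\Lambda\,|u|_{k+1,\alpha;\Om}$, not $\Lambda\,|u|_{k,\alpha;\Om}$. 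That term can indeed be controlled by the order-$(k-1)$ estimate applied to $u$ itself, but then your step uses the inductive constant twice multiplicatively: you get roughly $\Lambda\,C^{2k}(\Lambda/\lambda)^{6k}$, so the exponent of $\Lambda/\lambda$ doubles at each level instead of increasing by $3$, and the induction does not close with the claimed bound $C^{k+1}(\Lambda/\lambda)^{3(k+1)}$. The repair is to apply the order-zero estimate of Theorem~\ref{thm1june30} once, to the equation satisfied by the top-order derivatives $D^{\nu}u$ with $|\nu|=k$, and to invoke the inductive hypothesis only for the commutator terms involving $|u|_{k+1,\alpha;\Om}$; this is precisely how the paper itself argues later in Step~4 of Section~\ref{results}. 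Even then the stated form emerges only up to product-rule (combinatorial) constants and a stray factor of $\Lambda$ that the quoted constant silently absorbs---harmless for the paper's purposes, where only polynomial growth in $\e^{-1}$ matters, but it is not accurate to say the constant comes out ``precisely'' from your nesting.

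Two smaller points. The base case is not literally Theorem~\ref{thm1june30}: Theorem~\ref{thm2june30} assumes only $u\in C^2(\Om)\cap C^0(\overline{\Om})$ and asserts $u\in C^{k+2,\alpha}(\overline{\Om})$, whereas Theorem~\ref{thm1june30} presupposes $u\in C^{2,\alpha}(\overline{\Om})$; the initial regularity upgrade requires the global Schauder regularity theory of \cite{GiTr77}, and a similar remark is needed at each inductive level before the equation may be differentiated classically. Also, after flattening the boundary the transformed coefficients have altered ellipticity and H\"older bounds, and Theorem~\ref{thm1june30} is a global statement, so strictly one needs local interior and boundary estimates together with a covering argument; this is standard, but it is where the $C^{k+2,\alpha}$ regularity of $\partial\Om$ and the domain-dependent constants genuinely enter, and it deserves more than a parenthetical.
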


\subsection{Colombeau Algebras}
\label{Colombeau}

Now that we have defined the basic function spaces that we will be working 
with and stated the regularity theorems that will be required to obtain 
necessary growth estimates, we are ready to define the Colombeau algebra with which we 
will be working and formulate our problem in this algebra.

Let $V$ be a topological vector space whose topology is given by an increasing
family of seminorms $\mu_k$.
That is, for $u \in V$, $\mu_{i}(u) \le \mu_{j}(u)$ if
$i \le j$.  Then letting $I = (0,1]$, we define the following:
\begin{align}
\label{eq1july7}
\mathcal{E}_V &= (V)^I \hspace{4mm} \text{where $u\in \mathcal{E}_V$ is a net $(u_{\e})$ of elements in $V$ with $\e \in (0,1]$}, \\
\mathcal{E}_{M,V} &= \{ (u_{\e}) \in \mathcal{E}_V \hspace{2mm} | \hspace{2mm} \forall k \in \mathbb{N} \hspace{2mm} \exists a\in \mathbb{R} \hspace{2mm} : \hspace{2mm} \mu_k(u_{\e}) = \mathcal{O}(\e^a) \hspace{2mm} \text{as } \e \to 0\}, \\
\mathcal{N}_V &= \{ (u_{\e}) \in \mathcal{E}_{V,M} \hspace{2mm} | \hspace{2mm} \forall k \in \mathbb{N} \hspace{2mm} \forall a\in \mathbb{R} \hspace{2mm} : \hspace{2mm} \mu_k(u_{\e}) = \mathcal{O}(\e^a) \hspace{2mm} \text{as } \e \to 0\} \label{eq1:15nov11}.
\end{align}
Then the polynomial generalized extension of $V$ is formed
by considering the quotient $\mathcal{G}_V = \mathcal{E}_{M,V}/\mathcal{N}_V$.

We now give a few examples of generalized extensions.  See \cite{MP06,GKOS01} for a more detailed discussion.

\begin{definition}\label{eq1:3nov11}
If $V = \mathbb{C}$, $r \in \mathbb{C}$, $\mu_k(r) =|r|$, then one obtains $\overline{\mathbb{C}}$, the ring of generalized constants. This
ring contains all nets of complex numbers that grow no faster than a polynomial in $\e^{-1}$ as $\e \to 0$.  For example,
$(\textnormal{e}^{\e^{-1}}) \not\in \mathbb{\ol{C}}$ given that this net grows exponentially in $\e^{-1}$ as $\e \to 0$. 
\end{definition}

\begin{definition}\label{ex1:27oct11}
Let $\Om \subset \mathbb{R}^n$ 
be an open set, $U_k \subset\subset \Om$ an exhaustive sequence of compact sets and $\alpha \in \mathbb{N}^n_0$ a multi-index. Then if 
$$V = C^{\infty}(\Om), \hspace{2mm}f \in C^{\infty}(\Om), \hspace{4mm} \mu_{k}(f)= \sup\{|D^{\alpha}f| \hspace{1mm} : \hspace{1mm} x \in U_k,\hspace{2mm}|\alpha| \le k\},$$
one obtains $\mathcal{G}^s(\Om)$, the simplified Colombeau Algebra.
\end{definition}
\begin{definition}\label{def1:3feb13}
If $V=C^{\infty}(\overline{\Om})$, where $\Om\subset \mathbb{R}^n$ is bounded and 
$$\mu_k(f)=\sup\{|D^{\alpha} f |:|\alpha|\le k,\hspace{2mm} x\in\overline{\Omega}\},$$
we denote the generalized extension by $\mathcal{G}(\overline{\Om})$.  The
set $\mathcal{E}_{M,C^{\infty}(\overline{\Om})}$ will be denoted by $\mathcal{E}_M(\overline{\Om})$ 
and be referred to as the space of moderate elements.  The set $\mathcal{N}_{C^{\infty}(\overline{\Om})}$ will
be denoted by $\mathcal{N}(\overline{\Om})$ and will be referred to as the space of null elements.
\end{definition}
Both $\mathcal{G}^s(\Om)$ and $\overline{\mathbb{C}}$ were developed by Colombeau and laid the basis for the more general construction
described in \eqref{eq1july7}-\eqref{eq1:15nov11}.  See~\cite{JCo84} for more details.  
As in~\cite{MP06}, for the purposes of this paper
we are concerned with $\mathcal{G}(\overline{\Omega})$ given that we are interested in solving the Dirichlet problem
and require a well-defined boundary value.  
If $(u_{\e}) \in \mathcal{E}_M(\overline{\Om})$ is a representative of an element $u \in \mathcal{G}(\overline{\Om})$, 
we shall write $u = [(u_{\e})]$ to indicate that $u$ is the equivalence class of $(u_{\e})$.  At times we will drop the parentheses
and simply write $[u_{\e}]$.  Addition and 
multiplication of elements in $\mathcal{G}(\overline{\Om})$ is defined in terms of addition and multiplication of
representatives.  That is, if $u = [(u_{\e})]$ and $v = [(v_{\e})]$, then $uv = [(u_{\e}v_{\e})]$ and $u+v = [(u_{\e}+v_{\e})]$.
Derivations are defined for $u = [(u_{\e})] \in \mathcal{G}(\overline{\Om})$ by $\partial_{x_i} u = [(\partial_{x_i}u_{\e})]$.
\begin{theorem}
With the above definitions of addition, multiplication and differentiation, $\mathcal{G}(\overline{\Om})$ is a associative, commutative, differential algebra.
\end{theorem}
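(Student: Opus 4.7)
The plan is to verify that $\mathcal{E}_M(\overline{\Om})$ is a commutative, associative algebra stable under the partial derivatives $\partial_{x_i}$, that $\mathcal{N}(\overline{\Om})$ is a differential ideal inside it, and that consequently the operations of addition, multiplication, and differentiation pass to the quotient. Associativity and commutativity of the operations on representatives are immediate, since they are defined pointwise in $\e$ on the algebra $C^\infty(\overline{\Om})$, which is itself an associative and commutative algebra under pointwise operations. So the substance of the proof lies in the closure statements and the well-definedness on the quotient.

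First I would prove the two seminorm inequalities that drive everything: the Leibniz rule gives
\[
 \mu_k(fg) \;=\; \sup_{|\alpha|\le k,\ x\in\overline{\Om}}\Bigl|\sum_{\beta\le \alpha}\binom{\alpha}{\beta}D^{\beta}f\,D^{\alpha-\beta}g\Bigr| \;\le\; C_k\,\mu_k(f)\,\mu_k(g),
\]
and the trivial observation $\mu_k(\partial_{x_i}f)\le \mu_{k+1}(f)$. From these two bounds the closure assertions follow at once. If $(u_\e),(v_\e)\in\mathcal{E}_M(\overline{\Om})$ with $\mu_k(u_\e)=\mathcal{O}(\e^{a})$ and $\mu_k(v_\e)=\mathcal{O}(\e^{b})$, then $\mu_k(u_\e v_\e)=\mathcal{O}(\e^{a+b})$, so moderate nets are closed under products; closure under sums is even easier. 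Similarly, $\mu_k(\partial_{x_i}u_\e)\le \mu_{k+1}(u_\e)=\mathcal{O}(\e^{a'})$ for some $a'$, so moderate nets are closed under differentiation. The identical estimates, now with the quantifier ``for every $a\in\mathbb{R}$'' in place of ``for some $a\in\mathbb{R}$'', show that $\mathcal{N}(\overline{\Om})$ is closed under sums and differentiations.

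Next I would show that $\mathcal{N}(\overline{\Om})$ is an ideal in $\mathcal{E}_M(\overline{\Om})$. Given $(u_\e)\in\mathcal{E}_M(\overline{\Om})$ with $\mu_k(u_\e)=\mathcal{O}(\e^a)$ and $(n_\e)\in\mathcal{N}(\overline{\Om})$, the bound $\mu_k(u_\e n_\e)\le C_k\mu_k(u_\e)\mu_k(n_\e)$ together with the fact that $\mu_k(n_\e)=\mathcal{O}(\e^{b})$ for every $b\in\mathbb{R}$ yields $\mu_k(u_\e n_\e)=\mathcal{O}(\e^{a+b})$ for every $b$, hence $(u_\e n_\e)\in\mathcal{N}(\overline{\Om})$. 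To finish, I would check well-definedness of the quotient operations: if $u_\e-u_\e'\in\mathcal{N}(\overline{\Om})$ and $v_\e-v_\e'\in\mathcal{N}(\overline{\Om})$, the identity
\[
 u_\e v_\e - u_\e' v_\e' \;=\; u_\e(v_\e-v_\e') \;+\; (u_\e-u_\e')\,v_\e'
\]
together with the ideal property shows the right-hand side lies in $\mathcal{N}(\overline{\Om})$, so multiplication descends; similarly for addition, and for $\partial_{x_i}$ one uses that $\partial_{x_i}$ sends $\mathcal{N}(\overline{\Om})$ into itself. The main obstacle is really just assembling the seminorm bookkeeping cleanly; the fortunate feature of using $\mu_k=\sup_{|\alpha|\le k,\,x\in\overline{\Om}}|D^\alpha\cdot|$ over the compact set $\overline{\Om}$ is that the Leibniz estimate preserves the index $k$ on both sides, so no loss-of-derivatives issue arises, and the quotient inherits the algebra and derivation structure directly from $C^\infty(\overline{\Om})$.
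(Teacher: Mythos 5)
Your proposal is correct and follows essentially the same route as the paper: the paper's proof simply observes that component-wise operations make $(C^{\infty}(\overline{\Om}))^I$ a differential algebra and that $\mathcal{E}_M(\overline{\Om})$ is a subalgebra containing $\mathcal{N}(\overline{\Om})$ as a differential ideal, deferring the verification to the literature (\cite{GKOS01}). You have merely written out explicitly the seminorm (Leibniz-type) estimates and the ideal/quotient bookkeeping that the paper cites, so there is no substantive difference in approach.
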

\begin{proof}
This follows from the fact component-wise addition, multiplication, and differentiation makes $V^I = (C^{\infty}(\overline{\Om}))^I$ into a differential
algebra.  By design, $\mathcal{E}_M(\overline{\Om})$ is the largest sub-algebra of $(C^{\infty}(\overline{\Om}))^I$ that contains $\mathcal{N}(\overline{\Om})$
as an ideal.  Therefore $\mathcal{G}(\overline{\Om})$ is a differential algebra as well.  See~\cite{GKOS01}.
\end{proof}
Now that we have given the basic definition of a Colombeau algebra, we can discuss how distributions can be embedded
into a space of this type.

\subsection{Embedding Schwartz Distributions into Colombeau Algebras}\label{embed}
While the algebras defined above are somewhat unwieldy, these spaces
are well suited for analyzing problems with distributional data.
The primary reason
for this is that for a given open set $\Om\subset \mathbb{R}^n$, the Schwartz distributions $\mathcal{D}'(\Om)$ can be 
linearly embedded into $\cG^s(\Om)$.
This allows one to define an {\em extrinsic} notion of distributional
multiplication that is consistent with the pointwise product of 
$C^{\infty}(\Om)$ functions.     
Here we briefly discuss the method use to embed $\mathcal{D}'(\Om)$ into 
$\mathcal{G}^s(\Om)$.  Given that we will primarily be working with the
generalized extension $\cG(\ol{\Om})$ defined in \eqref{def1:3feb13}, we will then discuss how 
to embed certain subsets of $\cD'(\Om)$into $\cG(\ol{\Om})$.

We begin by recalling the definitions of the spaces that will be relevant to 
our discussion.
The Schwartz distributions on an open set $\Om\subset \mathbb{R}^n$ are 
denoted $\mathcal{D}'(\Om)$ and are defined to be the dual of 
$\mathcal{D}(\Om)$, the space of $C^{\infty}(\Om)$ functions
with support contained in $\Om$.
For a given $\varphi \in \mathcal{D}(\Om)$ and $T \in \mathcal{D}'(\Om)$, 
the action of $T$ on $\varphi$ will be denoted by 
$\left\langle T, \varphi \right\rangle $.  We let $\cE'(\Om)\subset \cD'(\Om)$ denote the
denote the space of compactly supported distributions.
Finally, we define the space of Schwartz functions $\cS(\mathbb{R}^n)$ by
\begin{align}
\cS(\mathbb{R}^n) = \left\{ f \in C^{\infty}(\mathbb{R}^n)~|~\|f\|_{\alpha,\beta} < \infty, ~~ \forall \alpha,\beta \right\}, \quad \|f\|_{\alpha,\beta} = \sup_{x\in\mathbb{R}^n}|x^{\alpha} D^{\beta}f(x)|,
\end{align}
where $\alpha, \beta$ are multi-indices.

Let $\varphi \in \cD(\mathbb{R}^n)$ satisfy
\begin{align}\label{eq1:5feb13}
\varphi(x) \ge 0, \quad \int_{\mathbb{R}^n} \varphi(x)~dx = 1, \quad \lim_{\e\to 0} \varphi_{\e}(x) = \lim_{\e to 0} \e^{-n}\varphi\left(\frac{x}{\e}\right) \to \delta(x).
\end{align}
So $\varphi(x)$ is a standard, positive mollifying function.  To construct our embedding, we will also require
another function with more restrictive properties. 
Let $\psi \in \mathcal{S}(\mathbb{R}^n)$ be a function such that $\psi \equiv 1$ on some neighborhood of $0$.  
Then define $\phi \in \mathcal{S}(\mathbb{R}^n)$
by $\phi = \mathcal{F}^{-1}[\psi]$, the inverse Fourier transform of $\psi$. 
It is easy to see that 
\begin{align}\label{eq4:28oct11}
\int_{\mathbb{R}^n} \phi \hspace{1mm}dx = 1 \hspace{2mm}  \text{ and} \hspace{2mm} \int_{\mathbb{R}^n} x^{\alpha}\phi \hspace{1mm}dx = 0 \quad \forall |\alpha| \ge 1.
\end{align}
Let  $\phi_{\e} = \e^{-n}\phi(\frac{x}{\e})$.  

The properties of $\phi$ specified in \eqref{eq4:28oct11} are extremely important.
By convolving with the function $\phi_{\e}$ and using the sheaf properties of the space
$\cG^s(\Om)$, one is able to construct a linear embedding
\begin{align}\label{eq3:28oct11}
&\it{i} : \mathcal{D}'(\Om) \to \mathcal{G}^s(\Om)
\end{align}
See \cite{GKOS01} for details.  An important property of this
embedding is that for any ${f, g \in C^{\infty}(\Om)}$, ${\it{i}(fg) = \it{i}(f)\it{i}(g)}$.
Therefore, multiplication in $\cG^s(\Om)$ is an extension of point-wise multiplication   
of $C^{\infty}$ functions.

We now discuss a method of embedding certain subsets of $\cD'(\Om)$ into $\cG(\overline{\Om})$.
The reason that we must restrict our embedding to certain subsets of $\cD'(\Om)$ is that our generalized
extension $\cG(\ol{\Om})$ is defined on the closed set $\ol{\Om}$.  Colombeau algebras of this form
no longer have a sheaf structure and so we can no longer take advantage of the general embedding
\eqref{eq3:28oct11} constructed in \cite{GKOS01}.  However, as in the case with the embedding constructed
in \cite{GKOS01}, our main tool for constructing our embedding will be convolution with the function $\phi$
satisfying the properties in \eqref{eq4:28oct11}.

The most natural way to associate a given element $u \in \cD'(\Om)$ with a net of $C^{\infty}(\ol{\Om})$ functions
is by mollifying $u$ with a function like $\varphi$ defined in \eqref{eq1:5feb13}.  But in order for our embedding to preserve point-wise multiplication of
$C^{\infty}(\ol{\Om})$ functions, we need the functions that we convolve with $u$ to have the same properties as $\phi_{\e}$ defined in \eqref{eq4:28oct11}.
However, $\phi_{\e} \in \cS(\mathbb{R}^n)$ for each $\e\in (0,1]$, so convolution with an arbitrary element $u \in \cD'(\Om)$ is not well-defined. 
This is where the sheaf properties of $\cG(\Om)$ are instrumental in constructing the embedding \eqref{eq3:28oct11}.  However, we no
longer have this option, and therefore focus on finding a subsets of $\cD'(\Om)$ for which the convolution is defined.

Given $f\in C^{\infty}(\ol{\Om})$, we again observe that the convolution $(f\ast \phi_{\e})$ is not well-defined for all $x \in \ol{\Om}$, $\e \in (0,1]$.  This
follows because $f$ has no value outside of $\ol{\Om}$.  What we seek is a way to extend $C^{\infty}(\ol{\Om})$ functions to $C^{\infty}(\mathbb{R}^n)$ functions, and more generally, elements of $\cD'(\Om)$ to $\cD'(\mathbb{R}^n)$, so that the convolution has meaning.  We note that it is not possible to extend an arbitrary element of $\cD'(\Om)$ to $\cD'(\mathbb{R}^n)$, so we will restrict ourselves to a subspace of $\cD'(\Om)$.  The following theorem taken from \cite{RA75} will provide us with a large subspace of $\cD'(\Om)$ that we can
extend.

\begin{theorem}\label{thm1:4feb13}
Suppose that $\Om' \subset\subset \Om$, and that $\Om$ is bounded and of $C^{\infty}$-class.  Then there exists a total extension operator, which has
the property that for each $0 \le k\le \infty$, $1 \le p \le \infty$
\begin{align}
&E: W^{k,p}(\Om)  \to W^{k,p}(\mathbb{R}^n),\\
&E(u)|_{\Om} = u,\nonumber
\end{align}
and 
\begin{align}\label{eq2:6feb13}
\|Eu\|_{W^{k,p}(\mathbb{R}^n)} \le C(n,p)\|u\|_{W^{k,p}(\Om)}.
\end{align}
\smallskip

\noindent
Moreover, $E$ can be extended to $\cE'(\Om') \subset \cE'(\Om)$ so that if $u \in \cE'(\Om')$,
\begin{align}
&E(u)|_{\Om} = u \\
&E(u)|_{\Om^c} = 0. \nonumber
\end{align}
\end{theorem}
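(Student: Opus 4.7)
The Sobolev part of this theorem is Stein's total extension theorem, so my plan is to outline the standard construction and then handle the distributional extension separately, which is essentially just extension by zero.

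My approach is as follows. First, because $\Om$ is bounded and of $C^{\infty}$-class, I can cover $\partial\Om$ by finitely many open balls $B_1,\dots,B_N$ such that in each $B_j$ there is a $C^{\infty}$ diffeomorphism $\Psi_j$ straightening the boundary, mapping $B_j \cap \Om$ onto a half-ball in $\mathbb{R}^n_+$ and $B_j\cap \partial\Om$ onto the flat piece. Together with $B_0 = \Om$ these cover $\overline{\Om}$, and I pick a $C^{\infty}$ partition of unity $\{\chi_j\}_{j=0}^N$ subordinate to this cover. For a function $u \in W^{k,p}(\Om)$, I write $u = \sum_j \chi_j u$ and transport each $\chi_j u$ (for $j\geq 1$) to the half-space via $\Psi_j$, reducing the construction to an extension operator on the half-space $\mathbb{R}^n_+$ into all of $\mathbb{R}^n$.

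On the half-space, the key tool is Stein's total extension operator. Rather than the elementary reflection $v(x',x_n) \mapsto \sum_{j=1}^{k+1} c_j v(x',-jx_n)$ (whose constants $c_j$ depend on $k$), I use Stein's construction
\begin{equation*}
(Ev)(x',x_n) = \int_1^\infty v(x',-\lambda x_n)\,\psi(\lambda)\,d\lambda, \qquad x_n < 0,
\end{equation*}
where $\psi$ is a rapidly decaying function satisfying $\int_1^\infty \psi(\lambda)\,d\lambda = 1$ and $\int_1^\infty \lambda^k \psi(\lambda)\,d\lambda = 0$ for all $k\geq 1$. This single operator extends functions in every $W^{k,p}$ for $0\le k\le \infty$, $1\le p\le \infty$ simultaneously, with operator norm bounds depending only on $n$ and $p$. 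Pulling back by $\Psi_j^{-1}$, multiplying by a cutoff to localize support in $B_j$, and summing the pieces together with $\chi_0 u$ (which is already supported in $\Om$ and may be extended by zero) produces the operator $E$ on $W^{k,p}(\Om)$ and the bound \eqref{eq2:6feb13}. The hardest technical step here is Stein's half-space construction itself; the partition-of-unity/straightening argument is then routine.

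For the second part, let $u \in \cE'(\Om')$. Since $\Om'\subset\subset\Om$, the compact set $K := \operatorname{supp} u \subset \Om'$ satisfies $\operatorname{dist}(K,\partial\Om) > 0$. I choose a cutoff $\eta \in \cD(\Om)$ with $\eta \equiv 1$ on a neighborhood of $K$, and define $E(u) \in \cD'(\mathbb{R}^n)$ by
\begin{equation*}
\langle E(u), \varphi\rangle := \langle u, \eta\varphi|_{\Om}\rangle, \qquad \varphi \in \cD(\mathbb{R}^n).
\end{equation*}
This is independent of the choice of $\eta$ (any two choices agree on a neighborhood of $\operatorname{supp} u$, so the difference is killed by $u$), it satisfies $E(u)|_\Om = u$ because for $\varphi\in\cD(\Om)$ we may take $\eta\varphi = \varphi$, and $E(u)$ vanishes outside $\ol{\Om'}$ since any $\varphi\in\cD(\Om^c)$ has $\eta\varphi\equiv 0$ on a neighborhood of $K$. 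Thus $E(u)|_{\Om^c} = 0$, completing the extension to $\cE'(\Om')$.

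The main obstacle is purely the half-space construction in the first part: producing a \emph{single} bounded extension operator valid for all Sobolev indices simultaneously requires the Stein averaging trick above, not merely a finite-order reflection. Once that is in hand, the gluing via partition of unity and the distributional extension by cutoff are both straightforward.
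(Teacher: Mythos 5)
Your Sobolev part follows essentially the same route as the paper: the paper simply covers $\partial\Om$ with finitely many small balls diffeomorphic to half-ball pieces and cites the total extension theorem (Theorem 4.28 in Adams) rather than writing out Stein's averaged-reflection operator, but the content — straighten, partition of unity, a single half-space extension valid for all $k,p$ simultaneously — is the same. For the distributional part your route is genuinely different and cleaner: you define $E(u)$ for $u\in\cE'(\Om')$ directly by $\langle E(u),\varphi\rangle=\langle u,\eta\varphi|_{\Om}\rangle$ with a cutoff $\eta$, whereas the paper approximates $u$ by a net of $C^{\infty}(\ol{\Om})$ functions vanishing near $\partial\Om$ (writing $u=\partial^{\alpha}f$, shifting and mollifying, then a partition of unity) and sets $E(u)=\lim_{\e\to 0}E(u_\e)$. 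Your definition avoids that limit argument entirely.

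There is, however, one genuine gap. The theorem asserts that $E$ \emph{can be extended} to $\cE'(\Om')$, i.e.\ the operator you define on $\cE'(\Om')$ must agree with the Sobolev operator on the overlap $W^{k,p}(\Om)\cap\cE'(\Om')$; this consistency is also what the paper needs later to make $E$, and hence the embedding $\it{i}$, well defined on the sum space $\cF'(\Om)=\cE'(\Om')+\bigcup W^{k,p}(\Om)$, where the decomposition $u+v$ is not unique. With your construction as stated this can fail: if the boundary balls $B_j$ are large enough to meet $\ol{\Om'}$, then for $u\in W^{k,p}(\Om)$ supported in $\Om'$ some localized piece $\chi_j u$ is nonzero, and Stein's operator (whose kernel $\psi$ is not compactly supported) produces an extension that is generally nonzero in the reflected region outside $\Om$ — so the Sobolev extension of $u$ is \emph{not} extension by zero, while your cutoff definition is. The fix is exactly the paper's opening move: take $\alpha=d(\Om',\partial\Om)>0$ and choose the boundary balls of radius at most $\alpha/2$, so that the cutoffs $\chi_j$, $j\ge 1$, vanish on a neighborhood of $\ol{\Om'}$; then for any $u$ supported in $\ol{\Om'}$ the Sobolev extension reduces to extension by zero and coincides with your duality definition, making the two definitions consistent and the word ``extended'' justified.
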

\begin{proof}
Let $ \alpha = d(\Om',\partial\Om) > 0$.  Given that $\Om$ is of $C^{\infty}$-class, we may cover $\partial\Om$ with finitely many balls of radius $\alpha/2$ (or smaller if necessary)
that are $C^{\infty}$-diffeomorphic with some subset of ${B_1(0) \cap \mathbb{R}^n_+}$.
As in the proof of Theorem 4.28 in \cite{RA75}, we may use these neighborhoods to construct a total extension
operator which has the property that for every $0 \le k \le \infty$, $1 \le p \le \infty$,
\begin{align}
&E: W^{k,p} \to W^{k,p}(\mathbb{R}^n),\\
&E(f)|_{\Om} = f.
\end{align}

We can extend this extension operator to $\cE'(\Om')$.  It is well know that for 
any $u \in W^{k,p}(\Om)$, there exists an approximating net $\{u_{\e}\} \subset C^{\infty}(\ol{\Om})$
such that $u_{\e} \to u$ in $W^{k,p}(\Om)$.  See the Global Approximation Theorem in \cite{LE98}.
Using the same argument as in the proof of this theorem, we can
obtain an approximating net of $C^{\infty}(\ol{\Om})$ functions for $u\in \cE'(\Om')$.
We have that $u = \partial^{\alpha} f$ for some continuous $f$ with support in an arbitrary neighborhood of $\text{supp}(u)$.
By shifting the argument of $f$ and mollifying up to the boundary in each of the balls covering $\partial\Om$ defined above,
and then applying a partition of unity argument, we obtain a net $\{u_{\e} \} \subset C^{\infty}(\ol{\Om})$ such that $u_{\e} \to u$ in $\cD'(\Om)$.
Furthermore, for this net $\{u_{\e}\}$ there exists $\e_0 \in (0,1)$ for which $u_{\e} \equiv 0$ on $\Om\cap \ol{\Om'}^c$~~ if $0<\e < \e_0$.  
For a given $u \in \cE'(\Om')$, we let $\{u_{\e}\}$ denote this approximating net and we define
$$
E(u) = \lim_{\e\to 0} E(u_{\e}).
$$
Based on the properties of $u_{\e}$, this extension will extend $u$ by zero outside of $\Om$.  We note that for
$u\in W^{k,p}(\Om) \cap \cE'(\Om')$, this definition of extension on $\cE'(\Om')$ will be consistent with the
extension on $W^{k,p}(\Om)$ given the properties of $E$ in \eqref{eq2:6feb13}. 
\end{proof}

We now define the following subspace of $\cD'(\Om)$ that we will embed into $\cG(\ol{\Om})$.  Fix an open subset of $\Om' \subset\subset \Om$,
where $\Om$ is of $C^{\infty}$-class.  Let 
\begin{align}
\cF'(\Om) = \cE'(\Om') + \left( \bigcup_{0 \le k \le \infty, 1 \le p \le \infty} W^{k,p}(\Om) \right),
\end{align}
\noindent
where the above notation indicates the subspace formed by the sum of $\cE'(\Om)$ and the union of the Sobolev spaces as subspaces of
$\cD'(\Om)$. 

\begin{theorem}\label{thm1:5feb13}
Let $E$ be the extension operator defined in Theorem~\ref{thm1:4feb13} and let $\phi_{\e} \in \cS(\mathbb{R}^n)$ be the 
net of functions defined in \eqref{eq4:28oct11}.  Then the map
\begin{align}\label{eq1:6feb13}
&\it{i}: \cF'(\Om) \to \cG(\ol{\Om}),\\
&\it{i}(u) = \left.(E(u)\ast \phi_{\e})\right|_{\ol{\Om}} + \cN(\ol{\Om}) \nonumber,
\end{align}  
is a linear embedding of $\cF'(\Om)$ into $\cG(\ol{\Om})$.
\end{theorem}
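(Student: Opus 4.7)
The plan is to establish three things about $\it{i}$: that it is well-defined (sends $\cF'(\Om)$ into $\cE_M(\ol{\Om})$ modulo $\cN(\ol{\Om})$), that it is linear, and that it is injective. First I would observe that for any $u \in \cF'(\Om)$ and any $\e > 0$, the extension $E(u)$ from Theorem~\ref{thm1:4feb13} is well-defined (either as an element of $W^{k,p}(\mathbb{R}^n)$ or, for the $\cE'(\Om')$ part, as a compactly supported distribution in $\mathbb{R}^n$), and convolution with the Schwartz function $\phi_{\e}$ produces a smooth function on $\mathbb{R}^n$ whose restriction to $\ol{\Om}$ is a legitimate candidate representative in $C^{\infty}(\ol{\Om})$.

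Next I would verify moderateness by decomposing $u = u_1 + u_2$ with $u_1 \in \cE'(\Om')$ and $u_2 \in W^{k,p}(\Om)$ and handling each piece separately. For $u_2$, pulling derivatives onto the mollifier gives $D^{\alpha}(E(u_2)\ast\phi_{\e}) = E(u_2)\ast D^{\alpha}\phi_{\e}$, and Young's inequality together with the scaling $\|D^{\alpha}\phi_{\e}\|_{L^{p'}} = C_{\alpha}\,\e^{-n/p - |\alpha|}$ yields
\begin{equation}
\|D^{\alpha}(E(u_2)\ast\phi_{\e})\|_{L^{\infty}(\ol{\Om})} \le C\,\|E(u_2)\|_{L^{p}(\mathbb{R}^n)}\,\e^{-n/p - |\alpha|}.
\end{equation}
For $u_1$ the structure theorem lets one write $u_1 = D^{\beta}f$ near its support with $f$ continuous and compactly supported, and integration by parts reduces the estimate to a polynomial-in-$\e^{-1}$ bound of order $n + |\alpha| + |\beta|$. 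Summing over $|\alpha|\le k$ gives the moderate bound for every seminorm $\mu_k$, so $(E(u)\ast\phi_{\e})|_{\ol{\Om}} \in \cE_M(\ol{\Om})$. Linearity then follows at once from the linearity of $E$ and of convolution, and independence of the chosen decomposition is automatic because $E$ is a single linear operator on the sum space $\cE'(\Om') + \bigcup W^{k,p}(\Om)$.

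Finally, for injectivity I would suppose $\it{i}(u) = 0$, i.e.\ $(E(u)\ast\phi_{\e})|_{\ol{\Om}} \in \cN(\ol{\Om})$, so in particular the net tends to zero uniformly on $\ol{\Om}$ faster than any power of $\e$. For any test function $\psi \in \cD(\Om)$ this immediately yields $\int_{\Om}(E(u)\ast\phi_{\e})\,\psi\,dx \to 0$. On the other hand, since $\int_{\mathbb{R}^n}\phi\,dx = 1$, the family $\phi_{\e}$ is a standard mollifier, so $E(u)\ast\phi_{\e} \to E(u)$ in $\cD'(\mathbb{R}^n)$; pairing against $\psi \in \cD(\Om) \subset \cD(\mathbb{R}^n)$ gives $\int_{\Om}(E(u)\ast\phi_{\e})\,\psi\,dx \to \langle u,\psi\rangle$. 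Uniqueness of limits forces $\langle u,\psi\rangle = 0$ for every $\psi \in \cD(\Om)$, hence $u = 0$ in $\cD'(\Om)$. I expect the main technical obstacle to be the moderateness bound for the compactly-supported piece $u_1$: because $\phi_{\e}$ is merely Schwartz (not compactly supported, unlike the standard $\varphi_{\e}$ in \eqref{eq1:5feb13}), one cannot read off the $\e$-dependence by a naive pointwise inequality, and the exponent of $\e^{-1}$ must be tracked through the order $|\beta|$ of $u_1$ as a distribution together with the derivative order $|\alpha|$ being estimated and the rescaling of $\phi$.
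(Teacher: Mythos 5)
Your proposal is correct and follows essentially the same route as the paper's proof: moderateness is obtained by splitting $u$ into its $W^{k,p}(\Om)$ and $\cE'(\Om')$ parts and estimating $D^{\alpha}(E(u)\ast\phi_{\e})$ by convolution bounds, linearity comes from linearity of $E$ and of convolution, and injectivity uses exactly the same argument (uniform decay of the null representative on $\ol{\Om}$ versus $E(u)\ast\phi_{\e}\to E(u)$ in $\cD'$). Your explicit Young-inequality exponent $\e^{-n/p-|\alpha|}$ is in fact a more carefully tracked version of the paper's stated $\cO(\e^{-|\alpha|})$, but this changes nothing since any polynomial bound suffices for moderateness.
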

\begin{proof}
By the linearity of the extension operator $E$, we observe that for any $u \in \cE'(\Om')$ and
$v\in W^{k,p}(\Om)$, $0 \le k \le \infty$, $1 \le p \le \infty$, $E(u+v)$ is well defined and unique
in the distributional sense.  Therefore, for any element $u \in \cF'(\Om)$, $E(u) \in \cD'(\mathbb{R}^n)$ is 
unique and $\it{i}$ is well-defined.  For any $u \in W^{k,p}(\Om)$ and multi-index $\alpha$, we have that
\begin{align}\label{eq3:6feb13}
\partial^{\alpha}(&E(u)\ast \phi_{\e}) = \\
&\int E(u)(x)\partial^{\alpha}\phi_{\e}(x-y)~dy = \int E(u)(x-\e y)\e^{-|\alpha|}\partial^{\alpha}\phi(y)~dy = \cO(\e^{-|\alpha |}), \nonumber
\end{align}  
given that $E(u) \in W^{k,p}(\mathbb{R}^n)$ and $\phi \in \cS(\mathbb{R}^n)$.  So $\it{i}(u) \in \cG(\ol{\Om})$.  A similar argument can be used
to show that $\it{i}(v) \in \cG(\ol{\Om})$ if $v \in \cE'(\Om')$.  By linearity, $\it{i}(u) \in \cG(\ol{\Om})$ for any $u \in \cF'(\Om)$.  Now we only need to 
show that $\it{i}$ is injective.  Suppose that $\it{i}(u) \in \cN(\ol{\Om})$.  Then $E(u) \ast \phi_{\e}(x) \to 0$ uniformly on $\ol{\Om}$.  Therefore,
for any $\psi \in \cD(\Om)$, 
$$
\langle u, \psi \rangle = \lim_{\e \to 0} \langle u\ast \phi_{\e}, \psi \rangle = \lim_{\e \to 0} \langle E(u) \ast \phi_{\e}, \psi\rangle = 0.
$$ 
So $u \equiv 0$ in $\cD'(\Om)$ and $\it{i}$ is injective.
\end{proof}

The embedding $\it{i}$ has the important property that it preserves point-wise multiplication of $C^{\infty}(\ol{\Om})$ functions.
We prove this by following the argument in \cite{GKOS01}.
We first observe that we may embed $f\in C^{\infty}(\ol{\Om})$ into $\cG(\ol{\Om})$ by the map
\begin{align}\label{eq4:6feb13}
\sigma: C^{\infty}(\ol{\Om}) \to \cG(\ol{\Om}),\\
\sigma(f) = (f)_{\e} + \cN(\ol{\Om})\nonumber
\end{align}
where $(f)_{\e}$ the constant net such that $f_{\e} = f $ for all $\e \in (0,1]$.

\begin{proposition}\label{prop1:6feb13}
The embedding $\it{i}$ has the property that $\it{i}|_{C^{\infty}(\ol{\Om}) }= \sigma$.
\end{proposition}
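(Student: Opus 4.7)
The plan is to show that for any $f \in C^{\infty}(\ol{\Om})$, the net $u_{\e} := (E(f) \ast \phi_{\e})|_{\ol{\Om}} - f$ lies in $\cN(\ol{\Om})$, which is precisely the statement $\it{i}(f) = \sigma(f)$ in $\cG(\ol{\Om})$. The main tool is the vanishing-moments property \eqref{eq4:28oct11} of $\phi$, combined with a Taylor expansion of the extension $E(f)$.

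First I would verify that $E(f) \in C^{\infty}(\mathbb{R}^n)$. Since $f \in C^{\infty}(\ol{\Om})$ implies $f \in W^{k,p}(\Om)$ for every $k$ and $p$, Theorem~\ref{thm1:4feb13} gives $E(f) \in W^{k,p}(\mathbb{R}^n)$ for all such $k,p$; by Sobolev embedding $E(f) \in C^{\infty}(\mathbb{R}^n)$, and $E(f)|_{\ol{\Om}} = f$ by continuity. Consequently $\partial^{\beta}(E(f) \ast \phi_{\e}) = (\partial^{\beta} E(f)) \ast \phi_{\e}$, and it suffices to prove, for every multi-index $\beta$ and every $a > 0$,
\begin{equation*}
\sup_{x \in \ol{\Om}} \bigl| (\partial^{\beta} E(f)) \ast \phi_{\e}(x) - \partial^{\beta} f(x) \bigr| = \cO(\e^{a}).
\end{equation*}
Since $\partial^{\beta} E(f) \in C^{\infty}(\mathbb{R}^n)$ and agrees with $\partial^{\beta} f$ on $\ol{\Om}$, it is enough to show the analogous uniform decay for $g := \partial^{\beta} E(f)$, namely $\sup_{x\in\ol{\Om}} |g \ast \phi_{\e}(x) - g(x)| = \cO(\e^{a})$ for every $a>0$.

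Next I would expand $g$ by Taylor's theorem about the point $x \in \ol{\Om}$ to order $N$:
\begin{equation*}
g(x - \e y) = \sum_{|\alpha| < N} \frac{(-\e y)^{\alpha}}{\alpha!} \, \partial^{\alpha} g(x) + R_N(x, \e y),
\end{equation*}
with $|R_N(x,\e y)| \le C_N \e^{N} |y|^{N} \sup_{|\gamma|=N, z \in \ol{\Om}+B_1} |\partial^{\gamma} g(z)|$ for $\e \le 1$. Substituting into the convolution integral and using \eqref{eq4:28oct11}, every term with $|\alpha| \ge 1$ vanishes because $\int y^{\alpha} \phi(y)\,dy = 0$, while the $\alpha = 0$ term contributes $g(x) \int \phi(y)\,dy = g(x)$. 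Therefore
\begin{equation*}
g \ast \phi_{\e}(x) - g(x) = \int R_N(x, \e y)\, \phi(y)\,dy,
\end{equation*}
whose absolute value is bounded by $C_N \e^{N} \|g\|_{C^{N}(\ol{\Om}+B_1)} \int |y|^N |\phi(y)|\,dy$; the last integral is finite because $\phi \in \cS(\mathbb{R}^n)$. Since $N$ is arbitrary and the bound is uniform in $x \in \ol{\Om}$, the decay $\cO(\e^{a})$ for every $a > 0$ follows, giving $u_{\e} \in \cN(\ol{\Om})$.

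I do not expect a serious obstacle: the argument is the standard ``Strang-type'' identification of mollification with a quasi-interpolant having vanishing moments. The only point requiring a little care is that the Taylor remainder must be controlled uniformly on $\ol{\Om}$, which is why I first promoted $E(f)$ to a genuine $C^{\infty}(\mathbb{R}^n)$ function via the Sobolev embedding applied for every $k$, rather than working with Sobolev representatives. Once this is in hand, the vanishing-moment identity does all the work.
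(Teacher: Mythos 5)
Your proof is correct and is essentially the same argument the paper uses: the paper's proof simply cites Proposition 1.2.11 of \cite{GKOS01} (whose proof is precisely this Taylor-expansion/vanishing-moments computation) together with the fact that $E(f)\in C^{\infty}(\mathbb{R}^n)$ and $E(f)|_{\ol{\Om}}=f$, which you justify via Theorem~\ref{thm1:4feb13} and Sobolev embedding. One small repair: since $y$ ranges over all of $\mathbb{R}^n$ in the convolution, the segment from $x$ to $x-\e y$ need not stay in $\ol{\Om}+B_1$, so the Taylor remainder should be bounded using $\sup_{z\in\mathbb{R}^n}|\partial^{\gamma}g(z)|$ --- finite because $E(f)\in W^{N+|\beta|,\infty}(\mathbb{R}^n)$ (equivalently, the bounded Sobolev embedding you already invoked) --- or else split off the tail $|y|>\e^{-1/2}$ and use the rapid decay of $\phi\in\cS(\mathbb{R}^n)$.
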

\begin{proof}
This follows from the proof of Proposition 1.2.11 in \cite{GKOS01} and the fact that if
$u \in C^{\infty}(\ol{\Om})$, then $E(u) \in C^{\infty}(\mathbb{R}^n)$ and $E(u)|_{\ol{\Om}} = u$.
\end{proof}

Proposition~\ref{prop1:6feb13} allows us to conclude that $\it{i}$ preserves point-wise multiplication
of $C^{\infty}(\ol{\Om})$ functions.  Indeed, if $f,g \in C^{\infty}(\ol{\Om})$, then
$$
\it{i}(fg) = \sigma(fg) = \sigma(f)\sigma(g) = \it{i}(f)\it{i}(g).
$$
\noindent
Now that we have a means of embedding a rather large class of elements of $\cD'(\Om)$ into
$\cG(\ol{\Om})$ that are useful for solving PDE, we can begin to formulate what a semilinear problem
in $\cG(\ol{\Om})$ looks like.

\subsection{Nets of Semilinear Differential Operators}
\label{netsofproblems}
We begin by defining a semilinear differential operator on $\mathcal{G}(\ol{\Om})$.  Our construction
strongly resembles the construction by Mitrovic and Pilipovic in \cite{MP06}.
For $\epsilon<1$,  if $(a^{ij}_{\epsilon})$, $(b^i_{\epsilon}) \in \mathcal{E}_M(\overline{\Om})$, we obtain a net of operators 
by defining $A_{\e}$ to be
$$
A_{\epsilon}u_{\e} = - D_i( a_{\epsilon}^{ij}D_j u) + \sum_i^K b^i_{\epsilon}u^{n_i} = - a^{ij}_{\e} D_iD_ju_{\e}- (D_ia^{ij}_{\e})(D_j u_{\e})+\sum_{i=1}^K b^i_{\e}(u_{\e})^{n_i},
$$
where $n_i \in \mathbb{Z}$.  Under certain conditions, we can view a net of operators of the above form as an operator on $\mathcal{G}(\ol{\Om})$.  Here we determine these conditions, which will guarantee that this net of operators is a well-defined operator on $\mathcal{G}(\overline{\Om})$. 

Given an element $u$ in $\mathcal{G}(\ol{\Om})$, we first need to ensure that $(A_{\e}u_{\e}) \in \mathcal{E}_{M}(\ol{\Om})$.
Based on how derivations and multiplication are defined in $\mathcal{G}(\ol{\Om})$, the only serious obstacle to this 
is if $n_i <0$ for some $i \le K$.
Therefore, we must guarantee that the
element $((u_{\e})^{n_i})$ is a well-defined representative in $\mathcal{G}(\overline{\Om})$ if $n_i <0$.  It suffices to ensure that $u =[(u_{\e})]$ has an inverse in $\mathcal{G}(\overline{\Om})$.  This is 
true if for each representative $(u_{\e})$ of $u$, there exists $\e_0 \in (0,1]$ and $m\in \mathbb{N}$ such that for all $\e \in (0,\e_0)$, $\inf_{x\in \overline{\Om}}|u_{\e}(x)| \ge C\e^m$.  See \cite{GKOS01} for more details.
So $u\in \mathcal{G}(\overline{\Om})$ must possess this property in order for the above operator to have any chance of being well-defined.  For the rest of this section we assume that $u$ satisfies this condition. 

Now suppose $(\overline{a}^{ij}_{\epsilon})$, $(\overline{b}^i_{\epsilon})$ in $\mathcal{E}_M(\overline{\Omega})$,
and let
$$
 \overline{A}_{\epsilon}u = -\sum_{i,j=1}^N D_i( \overline{a}_{\epsilon}^{ij}D_j u) + \sum_i^K \overline{b}^i_{\epsilon}u^{n_i} = \overline{a}^{ij}_{\e}D_iD_ju_{\e}-(D_i \overline{a}^{ij}_{\e})(D_ju_{\e})+\sum_{i=1}^K \overline{b}^i_{\e}(u_{\e})^{n_i}.
$$
We say that $(A_{\epsilon}) \sim (\overline{A}_{\epsilon})$ if $(a^{ij}_{\epsilon}-\overline{a}^{ij}_{\epsilon}),(b^i_{\epsilon}-\overline{b}^i_{\epsilon}) \in \mathcal{N}^s(\overline{\Omega})$.  Then $(A_{\epsilon})\sim (\overline{A}_{\epsilon})$ if and only if $(A_{\epsilon}u_{\epsilon}-\overline{A}_{\epsilon}u_{\epsilon})\in\mathcal{N}(\overline{\Omega})$ for all $(u_{\epsilon})\in\mathcal{E}_M(\overline{\Omega})$ due to the fact that the above operators are linear in $(a^{ij}_{\epsilon})$ and $(b^i_{\epsilon})$.

Let $\mathcal{A}$ be the family of nets of differential operators of the above form and define $\mathcal{A}_0=\mathcal{A}/\sim$.
Then for $A\in \mathcal{A}_0$ and $u\in \mathcal{E}_M(\overline{\Omega})$, define
$$
A:\mathcal{G}(\overline{\Omega})\to\mathcal{G}(\overline{\Omega}) \text{   by   } Au=[A_{\epsilon}u_{\epsilon}],
$$
where
\begin{align}
\label{eq1june26}
[A_{\epsilon}u_{\epsilon}]=[-a^{ij}_{\e}][D_iD_ju_{\epsilon}]+[-D_ia^{ij}_{\e}][D_j u_{\e}] + \sum_{i=1}^K[b^i_{\epsilon}][u^{n_i}_{\epsilon}].
\end{align}
Using this definition, $A\in \mathcal{A}_0$ is a well-defined operator on $\mathcal{G}(\overline{\Om})$.  We summarize this statement in the following proposition.

\begin{proposition}
\label{prop1july1}
$\mathcal{A}_0$ is a well-defined class of differential operators from $\mathcal{G}(\overline{\Om})$ to $\mathcal{G}(\overline{\Om})$.
\end{proposition}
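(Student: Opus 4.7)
The plan is to verify three independent assertions: (i) for any $u = [(u_\epsilon)] \in \mathcal{G}(\overline{\Om})$ (required to be invertible in the algebra whenever some $n_i<0$), the net $(A_\epsilon u_\epsilon)$ lies in $\mathcal{E}_M(\overline{\Om})$; (ii) the class $[A_\epsilon u_\epsilon]$ does not depend on the choice of representative $(u_\epsilon)$; and (iii) the class does not depend on the choice of representative $(A_\epsilon)$ of $A\in\mathcal{A}_0$. Together these show that the assignment $Au := [A_\epsilon u_\epsilon]$ is a well-defined map $\mathcal{G}(\overline{\Om})\to\mathcal{G}(\overline{\Om})$.

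For step (i), I would first treat each summand of \eqref{eq1june26} separately. Since $(a^{ij}_\epsilon),(b^i_\epsilon)\in\mathcal{E}_M(\overline{\Om})$ and $\mathcal{E}_M(\overline{\Om})$ is closed under differentiation, each of $(a^{ij}_\epsilon)$, $(D_i a^{ij}_\epsilon)$ and $(b^i_\epsilon)$ is moderate, as are $(D_jD_i u_\epsilon)$ and $(D_j u_\epsilon)$. Moderateness is preserved under products (the Leibniz rule produces a polynomial-in-$\epsilon^{-1}$ bound), which handles the terms with $n_i\geq 0$. The delicate case is $n_i<0$: here one uses the assumed invertibility of $u$, which supplies constants $C>0, m\in\mathbb{N}, \epsilon_0\in(0,1]$ with $\inf_{\overline{\Om}}|u_\epsilon|\ge C\epsilon^m$ for $\epsilon<\epsilon_0$. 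A Faà di Bruno expansion (or an induction) then writes $D^\alpha(u_\epsilon^{n_i})$ as a finite sum of products of derivatives of $u_\epsilon$ divided by a power of $u_\epsilon$; moderate numerator and polynomial lower bound on the denominator together yield a polynomial estimate on $|D^\alpha(u_\epsilon^{n_i})|$, so $(u_\epsilon^{n_i})\in\mathcal{E}_M(\overline{\Om})$.

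For steps (ii) and (iii), the fundamental fact to be invoked repeatedly is that moderate $\times$ null is null: if $(p_\epsilon)\in\mathcal{E}_M(\overline{\Om})$ and $(q_\epsilon)\in\mathcal{N}(\overline{\Om})$, then $(p_\epsilon q_\epsilon)\in\mathcal{N}(\overline{\Om})$. For (ii), if $(u_\epsilon-v_\epsilon)\in\mathcal{N}(\overline{\Om})$ the linear terms $-D_i(a^{ij}_\epsilon D_j(u_\epsilon-v_\epsilon))$ are null by this principle. The nonlinear terms are handled via the factorization
\[
u_\epsilon^{n_i} - v_\epsilon^{n_i} = (u_\epsilon - v_\epsilon)\cdot R_{n_i}(u_\epsilon,v_\epsilon),
\]
where, for $n_i\ge 0$, $R_{n_i}$ is a polynomial in $u_\epsilon,v_\epsilon$ (hence moderate), and for $n_i<0$, $R_{n_i}=-u_\epsilon^{-1}v_\epsilon^{-1}R_{-n_i}(u_\epsilon,v_\epsilon)$, which is moderate thanks to the invertibility hypothesis applied to both $u$ and $v$ (note $v$ inherits invertibility from $u$ since the two differ by a null net). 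For (iii), linearity of $A_\epsilon$ in $(a^{ij}_\epsilon,b^i_\epsilon)$ reduces the claim to showing that when $(a^{ij}_\epsilon-\overline{a}^{ij}_\epsilon),(b^i_\epsilon-\overline{b}^i_\epsilon)\in\mathcal{N}(\overline{\Om})$, then $(A_\epsilon u_\epsilon - \overline{A}_\epsilon u_\epsilon)\in\mathcal{N}(\overline{\Om})$; this again follows immediately from moderate $\times$ null is null, using that the derivatives $D_j u_\epsilon$, $D_iD_j u_\epsilon$ and the powers $u_\epsilon^{n_i}$ established in (i) are moderate.

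The main obstacle is the bookkeeping for the negative-exponent terms. Both verifying moderateness of $u_\epsilon^{n_i}$ and verifying that $u_\epsilon^{n_i}-v_\epsilon^{n_i}$ is null require using the quantitative polynomial lower bound coming from invertibility, and then tracking how this lower bound propagates through derivatives. Once the moderate-times-null and the explicit factorization identity above are established for negative powers, the rest of the argument is a straightforward application of the definitions of $\mathcal{E}_M(\overline{\Om})$, $\mathcal{N}(\overline{\Om})$, and the equivalence relation $\sim$ on $\mathcal{A}$.
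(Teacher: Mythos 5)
Your proposal is correct and, in substance, follows the same route as the paper: the proof reduces to showing that differences of powers of representatives are null, that null perturbations of the coefficients produce null perturbations of $A_\epsilon u_\epsilon$ (immediate from linearity in $(a^{ij}_\epsilon),(b^i_\epsilon)$, which the paper disposes of before the proposition), and that negative powers are controlled by the invertibility hypothesis $\inf_{\overline{\Omega}}|u_\epsilon|\geq C\epsilon^m$. The differences are organizational and algebraic rather than conceptual: the paper does not re-prove moderateness of $(u_\epsilon^{n_i})$ inside the proposition (your step (i)) but delegates it to the invertibility discussion and the cited literature, and for representative independence in $u$ it writes $\overline{u}_\epsilon=u_\epsilon+\eta_\epsilon$ and expands $(u_\epsilon+\eta_\epsilon)^{n_i}$ binomially for $n_i>0$, while for $n_i<0$ it computes the difference of reciprocals explicitly, $u_\epsilon^{n_i}-(u_\epsilon^{|n_i|}+\overline{\eta}_\epsilon)^{-1}=\overline{\eta}_\epsilon\big(u_\epsilon^{|n_i|}(u_\epsilon^{|n_i|}+\overline{\eta}_\epsilon)\big)^{-1}$; your telescoping factorization $u_\epsilon^{n}-v_\epsilon^{n}=(u_\epsilon-v_\epsilon)R_n(u_\epsilon,v_\epsilon)$ plus ``moderate times null is null'' accomplishes the same thing. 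One small correction: for $n_i<0$ the factor in your identity should be $R_{n_i}(u_\epsilon,v_\epsilon)=-\,u_\epsilon^{n_i}v_\epsilon^{n_i}R_{|n_i|}(u_\epsilon,v_\epsilon)$ (i.e., divide by $u_\epsilon^{|n_i|}v_\epsilon^{|n_i|}$), not $-u_\epsilon^{-1}v_\epsilon^{-1}R_{|n_i|}(u_\epsilon,v_\epsilon)$, which is only right when $|n_i|=1$; the corrected factor is still moderate by the invertibility of $u$ and $v$, so your argument goes through unchanged.
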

\begin{proof}
Based on the construction of $\mathcal{A}_0$, it is clear that for a given representative
$(u_{\e})$ of $u \in \mathcal{G}(\overline{\Om})$, $(A_{\e}u_{\e})$ and $(\overline{A}_{\e}u_{\e})$ represent the same element in $\mathcal{G}(\overline{\Om})$.  Furthermore, given a representative $(A_{\e})$
of $\mathcal{A}_0$, we also have that $[A_{\e}u_{\e}] = [A_{\e}\overline{u}_{\e}]$ for any two representatives of $u\in \mathcal{G}(\overline{\Om})$.   To see this, we first observe that for each $\e$, every term in $A_{\e}u_{\e}$ is linear except
for the $(u_{\e})^{n_i}$ terms.  So to verify the previous statement it suffices to show that for each
$n_i \in \mathbb{Z}$, $((u_{\e})^{n_i}) = ((\overline{u}_{\e})^{n_i})+(\overline{\eta}_{\e})$, where $(\overline{\eta}_{\e}) \in \mathcal{N}(\overline{\Om})$.  Given that $[(u_{\e})] = [(\overline{u}_{\e})]$ in 
$\mathcal{G}(\overline{\Om})$, we have $(\overline{u}_{\e}) = (u_{\e})+(\eta_{\e})$ for $(\eta_{\e}) \in \mathcal{N}(\overline{\Om})$.  For fixed $\e$, $n_i \in \mathbb{Z}^+$, 
$$
(\overline{u}_{\e})^{n_i} = (u_{\e}+\eta_{\e})^{n_i} = \sum_{j=0}^{n_i} \binom{n_i}{j} (u_{\e})^j(\eta_{\e})^{n_i-j} = (u_{\e})^{n_i} + \overline{\eta}_{\e},
$$
where $\overline{\eta}_{\e}$ consists of the summands that each contain some nonzero power of $\eta_{\e}$.  Clearly the net $(\overline{\eta}_{\e}) \in \mathcal{N}(\overline{\Om})$.  
If $n_i \in \mathbb{Z}^-$, then for a fixed $\e$,
$$
(\overline{u}_{\e})^{n_i} =\frac{1}{(u_{\e}+\eta_{\e})^{|n_i|}} = \frac{1}{\sum_{j=0}^{|n_i|} \binom{|n_i|}{j} (u_{\e})^j(\eta_{\e})^{|n_i|-j} }= \frac{1}{(u_{\e})^{|n_i|} + \overline{\eta}_{\e}}.
$$
By looking at the difference
$$
(u_{\e})^{n_i}- \frac{1}{(u_{\e})^{|n_i|} + \overline{\eta}_{\e}} = \frac{\overline{\eta}_{\e}}{((u_{\e})^{|n_i|})((u_{\e})^{|n_i|}+\overline{\eta}_{\e})} = \hat{\eta}_{\e},
$$
we see that the net $((u_{\e})^{n_i}) = ((\overline{u}_{\e})^{n_i}) + (\hat{\eta}_{\e})$, where $(\hat{\eta}_{\e}) \in \mathcal{N}(\overline{\Om})$.  Therefore for any $u\in \mathcal{G}(\overline{\Om})$
possessing an inverse, and any $A \in \mathcal{A}_0$, the expression $Au = [A_{\e}u_{\e}] \in \mathcal{G}(\overline{\Om})$ is well-defined.
\end{proof}

\subsection{The Dirichlet Problem in $\mathcal{G}(\overline{\Omega})$}
\label{Dirichlet}

Using the above definition of $\mathcal{A}$, we can now define our semilinear Dirichlet problem
on $\mathcal{G}(\ol{\Om})$.
Let $u,\rho \in \mathcal{G}(\overline{\Omega})$ where $\Om \subset \mathbb{R}^n$ is 
open, bounded and of $C^{\infty}$-class.
Then let $E$ be a total extension operator of $\Om$ such
that for $f \in C^{\infty}(\ol{\Om})$, $Ef \in C^{\infty}(\mathbb{R}^n)$ and $Ef|_{\ol{\Om}} =f$.  See \cite{RA75} for details.
Using $E$ we may 
may define $u|_{\partial\Omega}=\rho|_{\partial\Omega}$ for elements $u, \rho \in \mathcal{G}(\ol{\Om})$
if there are representatives $(u_{\epsilon})$ and $(\rho_{\epsilon})$ such that 
$$
u_{\epsilon}|_{\partial\Omega}=\rho_{\epsilon}|_{\partial\Omega}+n_{\epsilon}|_{\partial\Omega},
$$ 
where $n_{\epsilon}$ is a net of $C^{\infty}$ functions defined in a neighborhood of $\partial\Omega$
such that 
\begin{align}
\label{boundary}
\sup_{x\in\partial\Omega}|n_{\epsilon}(x)|=\textit{o}(\epsilon^a)\hspace{3mm} \forall a\in \mathbb{R}.
\end{align}
This will ensure that $u|_{\partial\Omega}=\rho|_{\partial\Omega}$ does not depend on representatives~\cite{MP06}. 
With this definition of boundary equivalence, for a given operator $A \in \mathcal{A}_0$, the Dirichlet problem
\begin{align}\label{eq1:1nov11}
Au &= 0 \hspace{3mm} \text{in $\Om$},\\
u  &= \rho \quad \text{on $\partial\Om$}  \nonumber
\end{align}
is well-defined in $\mathcal{G}(\overline{\Om})$.  Now we state the conditions
under which the above problem can be solved in $\mathcal{G}(\ol{\Om})$. 

\section{Overview of the Main Results}
\label{overview}

We begin this section by stating the main existence result for the Dirichlet problem \eqref{eq1:1nov11}.
Let $A\in\mathcal{A}_0$ be an operator on $\mathcal{G}(\overline{\Omega})$ defined by \eqref{eq1june26}.
Also assume that the coefficients of $A$ have representatives
$(a^{ij}_{\e}), (b^i_{\e}) \in \mathcal{E}_M(\overline{\Om})$ that satisfy the following properties for $\e \in (0,1)$:
\begin{align}
\label{eq1june27}
&a^{ij}_{\e} = a^{ji}_{\e}, \hspace{5mm} a^{ij}_{\e}\xi_i\xi_j \ge \lambda_\e|\xi|^2 \ge C_1\e^a|\xi|^2, \\
&|a^{ij}_{\e}|_{k+1,\alpha;\Om}, \hspace{2mm} |b^i_{\e}|_{k,\alpha;\Om} \le \Lambda_{k,\e} \le C_2(k)\e^{b(k)}, \quad \forall k \in \mathbb{N} \nonumber \\
&b^1_{\e} \le -C_3\e^c, \hspace{3mm} \hspace{3mm} \{n_i:n_i< 0\} \ne \emptyset, \hspace{3mm} n_1=\min\{n_i : n_i < 0\} \nonumber \\
&b^K_{\e} \ge C_4\e^d, \hspace{3mm} \{n_i: n_i > 0 \} \ne \emptyset ,\hspace{3mm} n_K=\max\{n_i : n_i > 0\}, \nonumber
\end{align}
where $C_1, C_2, C_3$ and $C_4$ are positive constants independent of $\e$ and the 
constants \\
${a,b,c, d \in \mathbb{R}}$ are also independent of $\e$. The notation $C_2(k)$ and
$b(k)$ is meant to indicate that these constants may depend on $k$.   
Then the following Dirichlet problem has a solution in $\mathcal{G}(\overline{\Om})$:
\begin{align}
\label{eq3june27}
Au =[A_{\e}&u_{\e}] = 0 \hspace{2mm}\text{in  }\Om,\\
u &=\rho  \quad \text{on $\partial\Om$}. \nonumber
\end{align}
We summarize this result in the following theorem, which will be the focus of the remainder of the paper:
\begin{theorem}
\label{thm1june27}
Suppose that $A:\mathcal{G}(\overline{\Om}) \to \mathcal{G}(\overline{\Om})$ is in $\mathcal{A}_0$ and that the conditions of \eqref{eq1june27} hold.  Assume that $\rho \in \mathcal{G}(\overline{\Om})$
has a representative $(\rho_{\e})$ such that for $\e < 1$, $\rho_{\e} \ge C\e^a$  for some $C>0$ and $a\in \mathbb{R}$.  Then there exists a solution to the Dirichlet problem \eqref{eq3june27} in $\mathcal{G}(\overline{\Om})$. 
\end{theorem}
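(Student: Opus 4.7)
The plan is to prove Theorem~\ref{thm1june27} by constructing a moderate net of classical smooth solutions. Fix representatives $(a^{ij}_\e)$, $(b^i_\e)$, $(\rho_\e)$ satisfying \eqref{eq1june27} and $\rho_\e \ge C\e^a$, and consider the family of smooth Dirichlet problems
\begin{align*}
A_\e u_\e = -D_i(a^{ij}_\e D_j u_\e) + \sum_{i=1}^K b^i_\e u_\e^{n_i} = 0 \text{ in }\Om, \qquad u_\e = \rho_\e \text{ on }\partial\Om.
\end{align*}
If each problem admits a classical solution $u_\e \in C^\infty(\ol{\Om})$ bounded \emph{below} by a positive power of $\e$ and whose Hölder seminorms of every order grow at most polynomially in $\e^{-1}$, then $(u_\e) \in \cE_M(\ol{\Om})$ and $u := [u_\e]$ is the desired element of $\cG(\ol{\Om})$; boundary agreement is immediate from $u_\e = \rho_\e$ on $\partial\Om$.

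For each fixed $\e < 1$ I will construct positive \emph{constant} sub- and super-solutions $\alpha_\e \le \beta_\e$, following the template of Proposition~\ref{prop2:24oct11}. Take $\beta_\e = \max\{\sup_\Om \rho_\e,\, \beta'_\e\}$, where $\beta'_\e$ is the smallest value above which the dominant positive term $b^K_\e y^{n_K}$ beats every other algebraic term in $\sum_i b^i_\e(x) y^{n_i}$; the lower bound $b^K_\e \ge C_4 \e^d$ combined with the Hölder bounds $|b^i_\e|_{0,\alpha;\Om} \le \Lambda_{0,\e}$ in \eqref{eq1june27} forces $\beta'_\e = \mathcal{O}(\e^{-M})$ for an explicit $M$. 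For $\alpha_\e$, pick a small positive constant for which $\sum_i b^i_\e(x)\alpha_\e^{n_i}\le 0$ pointwise; this is possible because the negative term $b^1_\e y^{n_1}$ with $n_1<0$ and $b^1_\e \le -C_3\e^c$ blows up as $y\to 0^+$, and balancing against the polynomially bounded positive contributions yields $\alpha_\e \ge C\e^{m}$ for a computable $m$. Shrinking $\alpha_\e$ further if necessary guarantees $\alpha_\e \le \inf_{\partial\Om}\rho_\e$ using $\rho_\e \ge C\e^a$. Theorem~\ref{thm1:24oct11} then produces $u_\e \in C^\infty(\ol{\Om})$ with $\alpha_\e \le u_\e \le \beta_\e$.

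Moderateness is then extracted by Schauder bootstrapping. The two-sided pointwise bound together with $\alpha_\e \ge C\e^m$ ensures that every nonlinear term $u_\e^{n_i}$, including the ones with $n_i<0$, is controlled in $L^\infty$ by a fixed negative power of $\e$. Rewriting the semilinear problem as the linear Dirichlet problem
\begin{align*}
-D_i(a^{ij}_\e D_j u_\e) = -\sum_{i=1}^K b^i_\e u_\e^{n_i} =: F_\e \text{ in }\Om,\qquad u_\e = \rho_\e \text{ on }\partial\Om,
\end{align*}
and applying the chain and Leibniz rules using \eqref{eq1june27} and the lower bound on $u_\e$, one obtains polynomial-in-$\e^{-1}$ control of each $|F_\e|_{k,\alpha;\Om}$. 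Theorem~\ref{thm2june30}, whose prefactor $C^{k+1}(\Lambda_{k,\e}/\lambda_\e)^{3(k+1)}$ is itself polynomial in $\e^{-1}$ under \eqref{eq1june27}, then delivers polynomial bounds on $|u_\e|_{k+2,\alpha;\Om}$ for every $k\in\N$, so $(u_\e)\in\cE_M(\ol{\Om})$.

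The principal obstacle is securing the constant sub-solution $\alpha_\e$ with an explicit polynomial lower bound in $\e$. Without this, the negative-exponent contribution $b^1_\e u_\e^{n_1}$ would fail to be moderate, breaking both the Schauder bootstrap and the very interpretation of $A_\e u_\e = 0$ in $\cG(\ol{\Om})$. The hypotheses in \eqref{eq1june27} are tuned to precisely this balance: they force the algebraic definitions of $\alpha_\e$ and $\beta_\e$ to have polynomial $\e$-behavior, they keep the Schauder factor $(\Lambda_{k,\e}/\lambda_\e)^{3(k+1)}$ polynomial at every order, and they preserve the moderateness class throughout the iteration.
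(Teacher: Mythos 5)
Your overall strategy coincides with the paper's: fix representatives satisfying \eqref{eq1june27}, solve the regularized net \eqref{11feb15eq1} by constructing constant sub- and super-solutions $\alpha_\e\le\beta_\e$ with polynomial $\e$-bounds (this is Proposition~\ref{prop2july20}, Proposition~\ref{prop1:15nov11} and Lemma~\ref{lem1july4}), invoke the sub-/super-solution fixed point theorem (Theorem~\ref{thm2june27}), and then bootstrap with the $\e$-explicit Schauder estimates to place $(u_\e)$ in $\cE_M(\ol{\Om})$. However, your moderateness step has a genuine gap at its base case. To apply Theorem~\ref{thm2june30} (already for $k=0$) you must bound $|F_\e|_{0,\gamma;\Om}$ with $F_\e=-\sum_i b^i_\e u_\e^{n_i}$, and this H\"older seminorm involves $[u_\e]_{0,\gamma;\Om}$, which is \emph{not} controlled by the two-sided bounds $\alpha_\e\le u_\e\le\beta_\e$ and the lower bound $\alpha_\e\ge C\e^m$; it is part of what you are trying to estimate. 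The chain and Leibniz rules only convert $|F_\e|_{k,\gamma;\Om}$ into expressions involving $|u_\e|_{k,\gamma;\Om}$, so the first application of the Schauder estimate yields an inequality of the form $|u_\e|_{2,\gamma;\Om}\le C(\Lambda_\e/\lambda_\e)^3\bigl(\cdots + C(\e)\,|u_\e|_{0,\gamma;\Om}\bigr)$, which is circular rather than a moderate bound.

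The paper closes exactly this loop by the interpolation inequality $|u_\e|_{0,\gamma;\Om}\le C\bigl(\delta_\e^{-1}|u_\e|_{0;\Om}+\delta_\e|u_\e|_{2,\gamma;\Om}\bigr)$, choosing $\delta_\e$ proportional to a suitable power of $\e$ so that the $|u_\e|_{2,\gamma;\Om}$ term is absorbed into the left-hand side; the polynomial bounds on $(\alpha_\e)$, $(\beta_\e)$, $(1/\alpha_\e)$ from Lemma~\ref{lem1july4} and the moderateness of $(b^i_\e)$ are what make the absorption constant polynomial in $\e^{-1}$. You need to add this absorption argument (or a substitute, e.g.\ an $\e$-explicit $L^p$ estimate plus Sobolev embedding that first controls $[u_\e]_{0,\gamma;\Om}$ from the $L^\infty$ bounds alone) before your induction can start; once the base case is secured, your inductive use of Theorem~\ref{thm2june30} for higher $k$ is sound and is essentially the paper's Step~4, which differentiates the equation and applies Theorem~\ref{thm1june30} to $D^\nu u_\e$. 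A minor further omission: to conclude that $u=[(u_\e)]$ solves \eqref{eq3june27} in $\cG(\ol{\Om})$ you should also check independence of the chosen representatives of $A$, $\rho$ and $u$ (the paper's Step~5), which follows from Proposition~\ref{prop1july1} and the definition of boundary equality via \eqref{boundary}.
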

\begin{proof}
The proof will be given in Section~\ref{results}.
\end{proof}
\begin{remark}\label{rem1:11nov11}
We can actually weaken the assumptions in \eqref{eq1june27} so that the conditions on the representatives  $(a^{ij}_{\e}), (b^1_{\e}),(b^K_{\e}), (\rho_{\e})$
only have to hold for all $\e \in (0,\e_{0})$ for some $\e_0 \in (0,1)$.  Suppose that this is the case, and that using these conditions
we are able to show that for all $\e \in (0,\e_0)$, there exists $u_{\e}$ that solves 
\begin{align}\label{eq1:11nov11}
A_{\e}u_{\e} &= 0 \quad \text{ in $\Om$},\\
u_{\e} &= \rho_{\e} \quad \text{on $\partial\Om$}. \nonumber
\end{align}
If $u_{\e}$ satisfies the additional property that for all $k \in \mathbb{N}$, there exists some $\e_0' \in (0,\e_0)$, $C>0$, and $a \in \mathbb{R}$ such that for all $\e \in (0,\e_0')$, 
$|u_{\e}|_{k,\alpha} \le C\e^a$, then we can form a solution $(v_{\e}) \in \mathcal{E}_M(\ol{\Om})$ to \eqref{eq3june27} by defining $v_{\e} = u_{\e}$ for $\e \in (0, \e_0)$ and 
$v_{\e} = u_{\e_0}$ for $\e \in [\e_0,1]$.  The solution theory that we develop to prove Theorem~\ref{thm1june27} with 
the stronger conditions \eqref{eq1june27} 
will also imply the existence of the partial net $(u_{\e})$ of solutions to \eqref{eq1:11nov11} in the event that the constraints outlined in \eqref{eq1june27} only hold for $\e \in (0,\e_0) \subset (0,1)$.  We will
require this fact when we consider how to embed and solve \eqref{problem} in $\mathcal{G}(\ol{\Om})$ later on in Section~\ref{embed}.
\end{remark}
We begin assembling the tools we will need to prove Theorem~\ref{thm1june27}.  The first tool we need
is a method capable of solving a large class of semilinear problems.  The method of sub- and super-solutions
meets this need, and we discuss this process of solving elliptic, semilinear problems in the following section.

\subsection{The Method of Sub- and Super-Solutions}
\label{subsolution}

In Theorem~\ref{thm2june27} below, we state a fixed-point result that will be essential in proving Theorem~\ref{thm1june27}.  This fixed-point result is known
as the method of sub- and super-solutions due to the fact that for a given operator $A$, the method relies on finding a sub-solution $u_-$ and super-solution $u_+$ such that
$u_- < u_+$.  A large part of this paper is devoted to finding a net of positive sub- and super-solutions for~\eqref{eq3june27} and establishing growth conditions for them.
In the proof below, let 
\begin{align}
\label{eq1june29}
Lu = -D_i(a^{ij}D_{j}u) + cu,
\end{align}
be an elliptic operator where
$$
a^{ij} = a^{ji}, \quad a^{ij}\xi_i\xi_j \ge \lambda|\xi|^2 \quad \text{and} \quad
a^{ij}, c \in C^{\infty}(\overline{\Om}).
$$
We now state and prove the sub- and super-solution fixed-point result for these assumptions.
\begin{theorem}
\label{thm2june27}
Suppose $\Om \subset \mathbb{R}^n$ is a $C^{\infty}$ domain and assume
$f:\overline{\Om} \times \mathbb{R}^+\to \mathbb{R}$ 
is in
$C^{\infty}(\overline{\Om}\times\mathbb{R}^+)$ and $\rho \in C^{\infty}(\overline{\Om})$.  Let $L$ be of the
form \eqref{eq1june29}.  Suppose that there exist functions
$u_-:\overline{\Om} \to\mathbb{R}$ and $u_+:\overline{\Om}\to \mathbb{R}$ 
such that the following hold:
\begin{enumerate}
\item $u_-,u_+ \in C^{\infty}(\overline{\Om})$,
\item $0<u_-(x) \le u_+(x) \hspace{3mm}\forall x\in \overline{\Om}$,
\item $ Lu_- \le f(x,u_-)$,
\item $ Lu_+ \ge f(x,u_+)$,
\item $ u_- \le \rho \hspace{2mm}\text{on} \hspace{2mm} \partial\Om$,
\item $ u_+ \ge \rho \hspace{2mm}\text{on} \hspace{2mm} \partial\Om$.
\end{enumerate}
Then there exists a solution $u$ to 
\begin{align}
\label{eq3june30}
Lu &= f(x,u) \hspace{3mm} \text{on $\Om$},\\
u &= \rho \quad \text{on $\partial\Om$}, \nonumber
\end{align}
such that
\begin{itemize}
\item[(i)] $u\in C^{\infty}(\overline{\Om})$,
\item[(ii)] $u_-(x)\le u(x) \le u_+(x). $
\end{itemize}
\end{theorem}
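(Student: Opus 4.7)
The plan is to reduce the nonlinear problem to a monotone iteration of linear Dirichlet problems by absorbing the non-monotonicity of $f$ into the principal operator. First I would choose a constant $k > 0$ large enough so that (i) the shifted function $g(x,u) := f(x,u) + k u$ is strictly increasing in $u$ on the compact interval $[m,M]$, where $m = \min_{\overline{\Om}} u_-$ and $M = \max_{\overline{\Om}} u_+$, and (ii) the operator $L_k := L + k$ satisfies the weak maximum principle on $\Om$. Part (i) is possible because $f \in C^{\infty}(\overline{\Om} \times \mathbb{R}^+)$ and $u_- > 0$, so $\partial_u f$ is bounded on the compact set $\overline{\Om} \times [m,M]$; taking $k > \sup |\partial_u f|$ suffices. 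Enlarging $k$ further if necessary makes $c + k \geq 0$, which yields (ii).

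Rewriting the equation as $L_k u = g(x,u)$, I would define the iteration $u_0 := u_+$ and let $u_{n+1}$ be the unique classical solution of the linear Dirichlet problem $L_k u_{n+1} = g(\cdot, u_n)$ in $\Om$ with $u_{n+1} = \rho$ on $\partial\Om$. Once we know inductively that $u_n$ is smooth and takes values in $[m,M]$, the right-hand side $g(\cdot, u_n)$ lies in $C^{k,\alpha}(\overline{\Om})$ for every $k$, and Theorem~\ref{thm2june30} together with the smoothness of the coefficients produces $u_{n+1} \in C^{\infty}(\overline{\Om})$.

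The heart of the argument is the monotone sandwich $u_- \leq u_{n+1} \leq u_n \leq u_+$, proved by induction. For the base case, the super-solution inequality yields $L_k(u_+ - u_1) \geq g(x,u_+) - g(x,u_+) = 0$ in $\Om$ and $u_+ - u_1 = u_+ - \rho \geq 0$ on $\partial\Om$, so the maximum principle for $L_k$ forces $u_1 \leq u_+$. The lower bound $u_1 \geq u_-$ follows similarly from the sub-solution inequality combined with the monotonicity of $g$: $L_k(u_1 - u_-) \geq g(x, u_+) - g(x, u_-) \geq 0$ in $\Om$, with nonnegative boundary data. The inductive step is identical, using $u_{n-1} \geq u_n$ and monotonicity of $g$ to compare $u_n$ with $u_{n+1}$, and staying in $[m,M]$ so that $g$-values are well controlled.

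To pass to the limit, the sequence $\{u_n\}$ is pointwise monotone and uniformly bounded, hence converges pointwise to some $u$ with $u_- \leq u \leq u_+$. Since $g(\cdot, u_n)$ is uniformly bounded in $C^{0,\alpha}(\overline{\Om})$ (its arguments remain in the compact set $\overline{\Om} \times [m,M]$ on which $g$ is smooth), Theorem~\ref{thm1june30} furnishes a uniform $C^{2,\alpha}$ bound on $u_n$, and a standard compactness argument upgrades pointwise convergence to $C^2$ convergence, so $u$ solves the Dirichlet problem classically. Iterated application of Theorem~\ref{thm2june30}, noting that $g(\cdot, u) \in C^{k,\alpha}$ whenever $u \in C^{k,\alpha}$, then bootstraps $u$ up to $C^{\infty}(\overline{\Om})$. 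The main obstacle to watch out for is the coupling between the choice of $k$ and the boundary inequalities in hypotheses (5)--(6): we must ensure the maximum principle can compare $u_{n+1}$ to $u_\pm$ on $\partial\Om$ where $u_\pm$ only dominate or are dominated by $\rho$ (not equal to it), which is why the shift by $k$ making $c+k \geq 0$ is essential rather than merely technically convenient.
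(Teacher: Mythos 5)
Your proposal follows essentially the same route as the paper: shift by a constant $k$ chosen so that $g(x,u)=f(x,u)+ku$ is increasing on $[\min u_-,\max u_+]$ and $c+k\ge 0$, then run a monotone iteration of linear Dirichlet problems controlled by the weak maximum principle, and finally pass to the limit and bootstrap. The only cosmetic differences are that you iterate downward from $u_+$ (the paper iterates upward from $u_-$, solving $A u_1 = F(x,u_-)$ and producing an increasing sequence), and that you try to obtain compactness directly from the Schauder estimate of Theorem~\ref{thm1june30}, whereas the paper first uses $L^p$ elliptic estimates and Sobolev embedding to get a uniform $C^{1,\alpha}$ bound, applies Arzel\`a--Ascoli, and then upgrades convergence in H\"older norms.

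There is one step whose justification does not hold as written: you claim that $g(\cdot,u_n)$ is uniformly bounded in $C^{0,\alpha}(\overline{\Om})$ ``because its arguments remain in the compact set $\overline{\Om}\times[m,M]$.'' Confinement of the range only gives a uniform $L^\infty$ bound; the H\"older seminorm of $x\mapsto g(x,u_n(x))$ is controlled by $\|g\|_{C^1}\bigl(1+[u_n]_{0,\alpha;\Om}\bigr)$, so you need a uniform H\"older (or $C^1$) bound on $u_n$ \emph{before} you can feed Theorem~\ref{thm1june30} --- which is exactly the kind of bound you are trying to produce, so the argument as stated is circular. Two standard repairs: (a) do what the paper does and first invoke the $L^p$ estimate $\|u_{n+1}\|_{W^{2,p}}\le C(\|u_{n+1}\|_{L^p}+\|g(\cdot,u_n)\|_{L^p})$, which needs only the sup-norm bound $m\le u_n\le M$; taking $p>n$ and using the embedding $W^{2,p}\hookrightarrow C^{1,\alpha}$ gives the missing uniform H\"older control, after which your Schauder step and the rest of the argument go through; or (b) keep the Schauder estimate but absorb the bad term, using the interpolation inequality $[u_n]_{0,\alpha;\Om}\le \delta\,|u_n|_{2,\alpha;\Om}+C(\delta)\,|u_n|_{0;\Om}$ together with the uniform $L^\infty$ bound to turn the resulting recursion $|u_{n+1}|_{2,\alpha;\Om}\le C_1+C_2\,[u_n]_{0,\alpha;\Om}$ into one with a uniform bound. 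With either fix, the monotone sandwich, the passage to the limit, and the bootstrap to $C^{\infty}(\overline{\Om})$ via Theorem~\ref{thm2june30} are sound.
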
  
\begin{proof}
The general approach of the proof will be to construct a monotone sequence $\{u_n\}$ that is point-wise bounded above and below by our super- and sub-solutions, $u_+$ and $u_-$.  
We will then apply elliptic regularity estimates and the Arzela-Ascoli Theorem to conclude that the sequence $\{u_n\}$ has a $C^{\infty}(\overline{\Om})$ limit $u$ that is a solution to 
\begin{align}
Lu &= f(x,u) \hspace{3mm}\text{on $\Om$},\\
u &= \rho \quad \text{on $\partial\Om$}. \nonumber
\end{align}
 
Given that $u_-(x), u_+(x) \in C^{\infty}(\overline{\Om})$, the interval $[\min u_-(x), \max_+ u_+(x)] \subset \mathbb{R}^+$ is well-defined.  We then restrict the domain of the function $f$ to the compact set
$K = \overline{\Om}\times [\min u_-(x), \max_+ u_+(x)]$.  Given that
$f \in C^{\infty}(\overline{\Om}\times \mathbb{R}^+)$, it is clearly in $C^{\infty}(\overline{\Om}\times [\min u_-(x), \max_+ u_+(x)])$ and so the function $|\frac{\partial f(x,t)}{\partial t}|$ is continuous and attains a maximum on $ K$.  Denoting this maximum value by
$m$, let $M= \max\{m, -\inf_{x\in \ol{\Om}} c(x)\}$.  Then consider the
operator 
$$
Au =  Lu + Mu,
$$
and the function
$$
F(x,t) = Mu+f(x,t).
$$
Note that this choice of $M$ ensures that $F(x,t)$ is an increasing function in $t$ on $K$
and that $A$ is an invertible operator.  
Also, we clearly have the following: 
\begin{align}
A(u) = F(x,u) &\Longleftrightarrow Lu = f(x,u), \\
A(u_-) \le F(x,u_-) &\Longleftrightarrow L(u_-) \le f(x,u_-), \\
A(u_+) \ge F(x,u_+) &\Longleftrightarrow L(u_+) \ge f(x,u_+).
\end{align}
The first step in the proof is to construct the sequence $\{u_n\}$ iteratively.  Let $u_1$ satisfy the equation
\begin{align}
A(u_1) &= F(x,u_-) \hspace{2mm} \text{on $\Om$},\\
u_1&= \rho \quad \text{on $\partial\Om$}. \nonumber
\end{align}
We observe that for $u, v\in H^1_0(\Om)$, the operator $A$ satisfies
$$
C_1\|u\|_{H^1(\Om)}^2 \le \left\langle Au,u \right\rangle, \hspace{5mm} \text{ and} \hspace{5 mm} \left\langle Au,v\right\rangle \le \|u\|_{H^1(\Om)}^2\|v\|_{H^1(\Om)}^2,
$$
where 
$$
\left\langle u,v\right\rangle = \int_{\Om} uv dx, \hspace{5mm}\text{and} \hspace{5 mm} \left\langle Lu,v\right\rangle = \int_{\Om} (a^{ij} D_ju D_iv + cuv) dx.
$$
Therefore the Lax-Milgram theorem implies
that there exists a weak solution ${u_1 \in H^1(\Om)}$ satisfying ${u_1-\rho \in H^1_0(\Om)}$.
Given our assumptions on $F(x,t)$ and $\rho$,  
${F(x,u_+) \in H^{m}(\Om)}$ and ${\rho \in H^m(\Om)}$ for all $m \in \mathbb{N}$.
Therefore, by standard elliptic regularity arguments, $u_1 \in H^m(\Om)$ for all $m\in \mathbb{N}$.
This, the assumption that $\Om$ is of $C^{\infty}$-class and the assumption that $a^{ij}, c, \rho \in C^{\infty}(\ol{\Om})$ imply that $u_1\in C^{\infty}(\overline{\Om})$
and $u_1 = \rho \hspace{2mm} \text{on $\partial\Om$}$.  Therefore, we may iteratively define the sequence $\{u_j\} \subset C^{\infty}(\overline{\Om})$ where
\begin{align}
\label{eq3june29}
A(u_{j}) &= F(x,u_{j-1}) \hspace{3mm} \text{on $\Om$},\\
       u_{j} &= \rho \quad \text{on $\partial\Om$}. \nonumber
\end{align}

The next step is to verify that the sequence $\{u_j\}$ is a monotonic increasing sequence satisfying $u_- \le u_1 \le \cdots \le u_{j-1} \le u_j \le \cdots \le u_+$.
We prove this by induction.  First we observe that
\begin{align}
A(u_- -u_1)  \le F(x,u_-)-F(x,u_-) &= 0 \hspace{3mm}\text{on $\Om$}, \\
  ( u_-- u_1)|_{\partial\Om} &\le 0. \nonumber
\end{align}
Therefore, by the weak maximum principle, $u_-\le u_1$ on $\overline{\Om}$.  Now suppose that $u_{j-1} \le u_{j}$.  Then
\begin{align}
A(u_j-u_{j+1}) = F(x,u_{j-1})-F(x,u_j) &\le 0 \hspace{3mm}\text{ on $\Om$}, \\
  ( u_j-u_{j+1})|_{\partial\Om} &= 0 . \nonumber
\end{align}
given that $F(x,t)$ is an increasing function in the variable $t$ and $u_{j-1} \le u_j$.  The weak maximum 
principle again implies that $u_j \le u_{j+1}$, so by induction
we have that $\{u_j\}$ is monotonic increasing sequence that is point-wise bounded below by $u_-(x)$.  
Now we show that our increasing sequence is point-wise bounded above by $u_+(x)$ by proceeding in a 
similar manner.  Given that $u_- \le u_+$ and $u_+$ is a super-solution, we have that
\begin{align}
A(u_1-u_+) \le F(x,u_-)-F(x,u_+) &\le 0 \hspace{3mm} \text{ on $\Om$},\\
(u_1-u_+)|_{\partial\Om} &\le 0 . \nonumber
\end{align}
The weak maximum principle implies that $u_1\le u_+$.  Now assume that $u_j \le u_+$.  Then 
\begin{align}
A(u_{j+1}-u_+) \le F(x,u_j)-F(x,u_+) &\le 0 \hspace{3mm} \text{on $\Om$},\\
(u_{j+1} - u_+)|_{\partial\Om} &\le 0, \nonumber
\end{align}
given that $F(x,t)$ is an increasing function and $u_j \le u_+$.  So by induction the sequence 
$\{u_j\}$ is a monotonic increasing sequence that is point-wise bounded above by $u_+(x)$ and 
point-wise bounded below by $u_-(x)$.  

Up to this point, we have constructed a monotonic increasing sequence ${\{u_j\} \subset C^{\infty}(\overline{\Om})}$ such that for each $j$,
$u_j$ satisfies the Dirichlet problem \eqref{eq3june29} and 
is point-wise bounded below by $u_-$ and above by $u_+$.  The next step will be to apply the Arzela-Ascoli theorem and a bootstrapping
argument to conclude that this sequence converges to $u\in C^{\infty}(\overline{\Om})$.  We first show that it converges to $u\in C(\overline{\Om})$
by an application of the Arzela-Ascoli Theorem.   Clearly the family of functions $\{u_j\}$ is point-wise bounded, so it is only necessary to establish
the equicontinuity of the sequence.  Given that each function $u_j$ solves the problem \eqref{eq3june29},
by standard $L^p$ elliptic regularity estimates
(cf.~\cite{GiTr77}) 
we have that
$$
\|u_j\|_{W^{2,p}} \le C(\|u_j\|_{L^p}+\|F(x,u_{j-1})\|_{L^p}).
$$
The regularity of $F(x,t)$ and the sequence $\{u_j\}$ along with the above estimate and the compactness of $\overline{\Om}\times [\inf u_-, \sup u_+]$ imply that
there exists a constant $N$ such that $  \|F(x,u_{j-1})\|_{L^p} \le N$ for all $j$.  Therefore, if $p> 3$, the above  bound
and the fact that $u_- \le u_j \le u_+$ imply that for each $j\in \mathbb{N}$,
 $$
 |u_j|_{1,\alpha;\Om} \le C\|u\|_{W^{2,p}} \le \infty,
 $$
 where $\alpha = 1-\frac3p$.  This implies that the sequence $\{u_j\}$ is equicontinuous.  The Arzela-Ascoli Theorem
 then implies that there exists a $u\in C(\overline{\Om})$ and a subsequence $\{u_{j_k}\}$ such that $u_{j_k} \to u$ uniformly. 
 Furthermore, due to the fact that the sequence $\{u_j\}$ is monotonic increasing, we actually have 
 that $u_j \to u$ uniformly on $\overline{\Om}$.   
Once we have that $u_{j} \to u $ in $C(\overline{\Om})$, we apply $L^p$ regularity theory
again to conclude that
\begin{align}\label{eq1june30}
|u_{j} - u_{k}|_{1,\alpha;\Om} \le& C\|u_j-u_k\|_{W^{2,p}}\\
 \le& C'(\|u_{j}-u_{k}\|_{L^p}+\|F(x,u_{j-1})-F(x,u_{k-1})\|_{L^p}).\nonumber
\end{align}      
Note that the above estimate follows from the fact that $u_{j_k+1}-u_{j_l+1}$ satisfies 
\begin{align}
A(u_{j}-u_{k}) &= F(x,u_{j-1})-F(x,u_{k-1}) \hspace{3mm} \text{on $\Om$}, \\
(u_{j} - u_{k})|_{\partial\Om} &= 0. \nonumber
\end{align}
Given that $u_j \to u$ in $C(\overline{\Om})$, \eqref{eq1june30} 
 implies that the sequence $\{u_{j}\}$ is a Cauchy sequence in $C^{1}(\overline{\Om})$.  The completeness of $C^{1}(\overline{\Om})$ 
 then implies that this subsequence has a limit  $v \in C^{1}(\overline{\Om})$, and given that $u_{j} \to u$ in $C(\overline{\Om})$, it follows that $u = v$.  
Similarly, by repeating the above argument and using higher order $L^p$ estimates we have that
 \begin{align}\label{eq2june30}
 |u_{j} - u_{k}|_{2,\alpha;\Om} \le &C(\|u_j-u_k\|_{W^{3,p}} ) \\
  \le & C'(\|u_j-u_k\|_{W^{1,p}}+\|F(x,u_{j-1})-F(x,u_{k-1})\|_{W^{1,p}})\nonumber,
 \end{align}
 where $u_{j} \to u$ in $C^{1}(\overline{\Om})$ as $k \to \infty$.  Again, \eqref{eq2june30}, the regularity of $F$ and the fact
 that $u_j \to u$ in $C^1(\ol{\Om})$ 
 imply that the sequence $\{u_{j} \}$ is Cauchy in $C^{2}(\overline{\Om})$.  A simple induction
 argument then shows that $u \in C^{\infty}(\overline{\Om})$.
 
 The final step of the proof is to show that $u$ is an actual solution to the problem~\eqref{eq3june30}.
 It suffices to show that $u$ is a weak solution to the above problem.  It is clear that $u = \rho \hspace{2mm} \text{on $\partial\Om$}$,
 so we only need to show that $u$ satisfies \eqref{eq3june30} on $\Om$.   
 Fix $v \in H^1_0(\Om)$.  Then based on the definition
 of the sequence $\{u_j\}$, we have
 $$
 \int_{\Om}( a^{ij}D_j u_j D_i v + Mu_{j}v )dx = \int_{\Om} (f(x,u_{j-1})+ Mu_{j-1})v dx.
 $$
 As $u_j \to u$ uniformly in $C(\overline{\Om})$, we let let $j\to \infty$ to conclude that
 $$
  \int_{\Om}( a^{ij}D_j u D_i v + Muv )dx = \int_{\Om} (f(x,u)+ Mu)v dx.
  $$
  Upon canceling the term involving $M$ from both sides, we find that $u$ is a weak solution.
 \end{proof}

\subsection{Outline of the Proof of Theorem~\ref{thm1june27}}
\label{over}

Now that the sub- and super-solution fixed-point theorem is in place, we give an outline for how to prove Theorem~\ref{thm1june27}.  
\begin{itemize}
\label{steps}
\item[Step 1:]  { \it Formulation of the problem.}  We phrase~\eqref{eq3june27} in a way that allows us to solve a net of semilinear
elliptic problems.  We assume that the coefficients of $A$ and boundary data $\rho$ have representatives $(a^{ij}_{\e}), (b^i_{\e}),$ and $(\rho_{\e})$
in $\mathcal{E}_M(\overline{\Om})$ satisfying the assumptions \eqref{eq1june27}.
Then for this particular choice of representatives, we solve the family of problems:
\begin{align}\label{11feb15eq1}
A_{\epsilon}u_{\epsilon} = -\sum_{i,j=1}^N D_i( a_{\epsilon}^{ij}D_j u_{\epsilon}) &+ \sum_i^N b^i_{\epsilon}u_{\epsilon}^{n_i}=0 \quad \text{  in  $\Omega$},\\
u_{\epsilon} &= \rho_{\epsilon} \quad \text{on $\partial\Om$}. \nonumber 
\end{align}
Then we must ensure that the net of
solutions $(u_{\epsilon})\in \mathcal{E}_M(\overline{\Omega})$ and ensure that~\eqref{11feb15eq1} is satisfied for other representatives of $A,\rho,u$.

\item[Step 2:] {\it Determine $L^{\infty}$-estimates and a net of generalized constant sub-solutions and super-solutions}.  We determine constant, {\em a priori} $L^{\infty}$
bounds such that for a positive net of solutions $(u_{\e})$ of the semilinear problem \eqref{11feb15eq1}, there exist constants $a_1,a_2 \in \mathbb{R}$,
$C_1, C_2>0$ independent of $\e\in (0,1)$ such that 
$$C_1\e^{a_1}<\alpha_{\e} \le u_{\e} \le \beta_{\e} <C_2\e^{a_2}.$$ 
These estimates are constructed in such a way that for each $\e$,
the pair $\alpha_{\e}, \beta_{\e}$ are sub- and super-solutions for \eqref{11feb15eq1}.    
\item[Step 3:]{\it Apply fixed-point theorem to solve each semilinear problem in \eqref{11feb15eq1}}. Using the sub- and super-solutions $\alpha_{\e}, \beta_{\e}$, 
 we apply Theorem~\ref{thm2june27} to obtain a net of solutions $(u_{\e}) \in C^{\infty}(\overline{\Om})$.  

\item[Step 4:]{\it Verify that the net of solutions $(u_{\e}) \in \mathcal{E}_M(\overline{\Om})$}.  Here we show that the net of solutions
satisfies the necessary growth conditions in $\e$ using the growth conditions on the sub- and super- solutions  and Theorem~\ref{thm1june30}.

\item[Step 5:]{\it Verify that the solution is well-defined}.   Once we've determined that the net of solutions $(u_{\e}) \in \mathcal{E}_M(\overline{\Om})$,
we conclude that $[(u_{\e})] \in \mathcal{G}(\overline{\Om})$ is a solution to the Dirichlet problem \eqref{eq3june27} by showing that the solution
is independent of the representatives chosen.   Note that most of the work for this  step was done in Proposition~\ref{prop1july1}.

\end{itemize}

We shall carry out the above steps in our proof of Theorem~\ref{thm1june27} in Section~\ref{results}.  We still need
to determine a net of sub- and super- solutions for \eqref{eq1june27}, which  we do in Section~\ref{bounds1}.  But
before we move on to this and the other steps in the above outline, we briefly return to the motivating problem \eqref{problem}
by discussing how to embed a problem with distributional data into $\mathcal{G}(\ol{\Om})$. 

\subsection{Embedding a Semilinear Elliptic PDE with Distributional Data into $\mathcal{G}(\ol{\Om})$.}
\label{embed2}
Now that we have defined what it means to solve a differential equation in $\mathcal{G}(\ol{\Om})$, we are ready
to return to the problem discussed at the beginning of the paper.  We are interested in solving
an elliptic, semilinear Dirichlet problem of the form
\begin{align}\label{eq1:27oct11}
-\sum_{i,j=1}^ND_i(a^{ij}D_ju) &+ \sum^K_{i=1}b^i u^{n_i} = 0 \quad \text{ in $\Om$},\\
u &= \rho \quad \text{on $\partial\Om$} \nonumber,
\end{align}
where $a^{ij}, b^i$ and $\rho$ are potentially distributional and $n_i \in \mathbb{Z}$ for each $i$.
If we can formulate this problem as a family of equations similar to \eqref{11feb15eq1}, 
then it can readily be solved in $\mathcal{G}(\ol{\Om})$ by Theorem~\ref{thm1june27}.  
The key to formulating our problem with singular data as a net of problems is Theorem~\ref{thm1:5feb13}. 

Suppose that $\Om' \subset\subset \Om$ and $\Om$ is of $C^{\infty}$-class.
For this choice of $\Om'$, we can construct
an extension operator $E$ as in Theorem~\ref{thm1:4feb13} and then use Theorem~\ref{thm1:5feb13} to define an embedding of $\cF'(\Om)$ into
$\cG(\ol{\Om})$, where we defined $\cF'(\Om)\subset \cD'(\Om)$ in section~\ref{embed}.  If we are given a problem of the form \eqref{eq1:27oct11} 
with data $a^{ij}, b^i, \rho$ in $\mathcal{F}'(\Om)$, 
then we may use Theorem~\ref{thm1:5feb13} to embed the coefficients $a^{ij}, b^i$ and $\rho$ into $\mathcal{G}(\ol{\Om})$.  
We will denote a representative of the image of each these terms
in $\mathcal{G}(\ol{\Om})$ by $(a^{ij}_{\e}), (b^i_{\e})$ and $(\rho_{\e})$.  Then for a choice of representatives,
we obtain a net of problems of the form \eqref{11feb15eq1}.  

In order to solve this net of problems
using Theorem~\ref{thm1june27}, we need there to exist a choice of representatives $(a^{ij}_{\e}), (b^i_{\e})$ and $(\rho_{\e})$
that satisfy the conditions specified in \eqref{eq1june27}.  While these conditions might seem exacting, this
solution framework still admits a wide range of interesting problems.  This is evident when one considers
the following proposition:

\begin{proposition}\label{eq1:24feb12}
Let $\Om'\subset\subset \Om$, where $\Om$ is bounded and of $C^{\infty}$-class, and define $\cF'(\Om)$ as in section~\ref{embed}.
Let $n_i \in \mathbb{Z}$ be a collection of integers for $1 \le i \le K$ and assume that there exist $1 \le i,j \le K$ such that 
$n_i <0$ and $n_j >0$.  Then assume that
$$n_1 = \min\{n_i:~ n_i < 0 \}, \quad \text{and} \quad n_K = \max\{n_i:~ n_i > 0\}.$$ 
Suppose that  $a^{ij}, b^1, b^K, \rho \in C(\ol{\Om})$ and $b^2, \cdots, b^{K-1} \in \mathcal{F}'(\Om)$.
Additionally assume that $a^{ij}$ satisfies the symmetric, ellipticity condition and $\rho>0$, $b_1<0$ and
$b_K>0$ in $\Om$.  Then the problem
\begin{align}\label{eq1:28oct11}
-\sum_{i,j=1}^ND_i(a^{ij}D_ju) &+ \sum^K_{i=1}b^i u^{n_i} = 0 \quad \text{ in $\Om$},\\
u &= \rho \quad \text{on $\partial\Om$} \nonumber,
\end{align}
admits a solution in $\mathcal{G}(\ol{\Om})$.
\end{proposition}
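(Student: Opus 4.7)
The plan is to reduce Proposition~\ref{eq1:24feb12} to a direct application of Theorem~\ref{thm1june27} by first embedding the data $a^{ij}, b^i, \rho$ into $\cG(\ol{\Om})$ using the map constructed in Theorem~\ref{thm1:5feb13}, and then verifying that a convenient choice of representatives $(a^{ij}_{\e})$, $(b^i_{\e})$, $(\rho_{\e})$ satisfies the structural conditions \eqref{eq1june27} at least for $\e \in (0,\e_0)$. By Remark~\ref{rem1:11nov11} such a partial-net verification suffices to produce a moderate net of classical solutions to \eqref{11feb15eq1} and hence a bona fide element of $\cG(\ol{\Om})$ solving \eqref{eq1:28oct11}.

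First, since every continuous function on $\ol{\Om}$ lies in $L^p(\Om)=W^{0,p}(\Om)$ for all $p$, the coefficients $a^{ij}, b^1, b^K$ and the datum $\rho$ all lie in $\cF'(\Om)$; the remaining coefficients $b^2,\dots,b^{K-1}$ lie in $\cF'(\Om)$ by hypothesis. I would then set $a^{ij}_{\e} = (E(a^{ij})\ast\phi_{\e})|_{\ol{\Om}}$ and analogously for $b^i_{\e}$ and $\rho_{\e}$. By Theorem~\ref{thm1:5feb13} these representatives lie in $\cE_M(\ol{\Om})$, and the computation \eqref{eq3:6feb13} automatically furnishes the required Hölder-norm bounds $|a^{ij}_{\e}|_{k+1,\alpha;\Om}, |b^i_{\e}|_{k,\alpha;\Om}\le C(k)\e^{b(k)}$ of \eqref{eq1june27}.

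Next I would check the pointwise ellipticity and sign conditions. Since $a^{ij}$ is continuous on the compact set $\ol{\Om}$ and elliptic on $\Om$, compactness gives a uniform $\lambda_0>0$ with $a^{ij}(x)\xi_i\xi_j \ge \lambda_0|\xi|^2$ on $\ol{\Om}$; because $a^{ij}_{\e}\to a^{ij}$ uniformly on $\ol{\Om}$ as $\e\to 0$ (standard mollifier convergence applied to the continuous extension $E(a^{ij})$), the net $(a^{ij}_{\e})$ is uniformly elliptic with constant $\lambda_0/2$ for $\e\in(0,\e_0)$, i.e., the ellipticity condition in \eqref{eq1june27} holds with $a=0$. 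An identical uniform-convergence argument applied to $b^1,b^K,\rho$ yields constants $C_3,C_4,C>0$ and an $\e_0\in(0,1)$ such that $b^1_\e\le -C_3$, $b^K_\e\ge C_4$, $\rho_\e\ge C$ on $\ol\Om$ for all $\e\in(0,\e_0)$, giving the remaining conditions of \eqref{eq1june27} with $c=d=a=0$. With all hypotheses verified for this choice of representatives, Theorem~\ref{thm1june27} (in its weakened partial-net form from Remark~\ref{rem1:11nov11}) delivers a moderate net $(u_\e)$ of classical solutions to the family \eqref{11feb15eq1}, and Proposition~\ref{prop1july1} together with the representation-independence of the boundary condition discussed in Section~\ref{Dirichlet} ensures that $u=[(u_\e)]\in\cG(\ol\Om)$ is well-defined and solves the embedded problem \eqref{eq1:28oct11}.

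The main obstacle I anticipate is the passage from the strict sign hypotheses assumed on the \emph{open} set $\Om$ to uniform lower bounds for the mollified nets on $\ol{\Om}$. The values of $b^1_\e, b^K_\e, \rho_\e$ on $\partial\Om$ depend on the extensions $E(b^1), E(b^K), E(\rho)$ outside $\Om$, and if the continuous functions $b^1,b^K,\rho$ are allowed to touch zero somewhere on $\partial\Om$ the uniform convergence above need not deliver strictly positive constants $C_3,C_4,C$. To handle this one must either read the hypothesis as strict sign throughout $\ol{\Om}$ (the generic case), or exploit the freedom in the embedding by choosing the extension operator so that $E(b^1), E(b^K), E(\rho)$ retain the strict sign on an open neighborhood of $\ol{\Om}$, which can be arranged by modifying the extension on a boundary collar in a manner compatible with the equivalence class in $\cG(\ol{\Om})$. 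Once this boundary issue is dispensed with, the rest of the argument is a direct invocation of the machinery already assembled in the paper.
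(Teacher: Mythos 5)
Your proposal is correct and follows essentially the same route as the paper's own (very terse) proof: embed the data via Theorem~\ref{thm1:5feb13}, use uniform convergence of the mollified continuous coefficients on $\ol{\Om}$ to verify the conditions \eqref{eq1june27} for $\e$ sufficiently small, and invoke Theorem~\ref{thm1june27} together with Remark~\ref{rem1:11nov11}. The boundary-sign subtlety you flag is genuine but is simply glossed over in the paper, which implicitly reads the strict sign conditions on $\rho$, $b^1$, $b^K$ as holding on all of $\ol{\Om}$ so that compactness and uniform convergence yield the required uniform constants.
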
    
\begin{proof}
This follows from Proposition~\ref{thm1:5feb13}, Theorem~\ref{thm1june27}, Remark~\ref{rem1:11nov11} and the fact that \\$(a^{ij}\ast \phi_{\e})$,
 $(b^1\ast \phi_{\e})$, $(b^K\ast \phi_{\e})$ and $(\rho \ast \phi_{\e})$
 converge uniformly to $a^{ij}, b^1, b^K$ and $\rho$ in $\ol{\Om}$.  For $\e$ sufficiently small,
the corresponding problem \eqref{11feb15eq1} in 
$\mathcal{G}(\ol{\Om})$ will satisfy the conditions specified in \eqref{eq1june27}.  Therefore, Theorem~\ref{thm1june27} and Remark~\ref{rem1:11nov11}
imply the result. 
\end{proof}
With the issue of solving \eqref{eq1:27oct11} at least partially resolved, we return to the task of proving Theorem~\ref{thm1june27}.
We begin by establishing some {\em a priori} $L^{\infty}$-bounds for a solution to our semilinear problem \eqref{eq1:28oct11} if the given data is smooth.

\section{Sub- and Super-Solution Construction and Estimates}
\label{bounds1}

Given an operator $A \in \mathcal{A}_0$ with coefficients satisfying \eqref{eq1june27}, our solution strategy 
for the Dirichlet problem \eqref{eq3june27} is to solve the family of problems \eqref{11feb15eq1} and then
establish the necessary growth estimates.  In order for this to be a viable strategy, we first need to show
that \eqref{11feb15eq1} has a solution for each $\e \in (0,1)$.
Given that $n_i <0$ for some $1 \le i \le K$, for each $\e$, we must restrict
the operator
 $$A_{\e}u_{\e} = -\sum_{i,j=1}^N D_i(a_{\e}^{ij}D_j u_{\e})+ \sum_{i=1}^K b^i u_{\e}^{n_i},$$ 
to a subset of functions in $C^{\infty}(\ol{\Om})$
to guarantee that $A_{\e}$ is well-defined.  In particular, for each $\e$ we consider functions $u_{\e}\in C^{\infty}(\ol{\Om})$
such that $0 < \alpha_{\e}\le  u_{\e} \le \beta_{\e} < \infty$ for some choice of $\alpha_{\e}$ and $\beta_{\e}$.
The first part of this section is dedicated to making judicious choices of $\alpha_{\e}$ and $\beta_{\e}$ for each $\e$
such that a solution $u_{\e}$ to \eqref{11feb15eq1} exists that satisfies $\alpha_{\e} \le u_{\e} \le \beta_{\e}$.

Once a net of solutions $(u_{\e})$ is determined, it is necessary to show that if $(u_{\e}) \in \mathcal{E}_M(\ol{\Om})$,
then an operator $A \in \mathcal{A}_0$ whose
coefficients satisfy \eqref{eq1june27} is well-defined for $(u_{\e})$.  Recall that $A$ is only a well defined operator
for elements $u \in \mathcal{G}(\ol{\Om})$ satisfying $u_{\e} \ge C\e^a$
for $\e \in (0,\e_0) \subset (0,1)$, $a\in \mathbb{R}$ and some constant $C$ independent of $\e$. 
This will require us to establish certain
$\e$-growth estimates on $\alpha_{\e}$, which we do later in this section. 

\subsection{$L^{\infty}$ Bounds for the Semilinear Problem}
\label{bounds}
We begin by determining the net of {\em a priori} bounds $\alpha_{\e}$ and $\beta_{\e}$ described above.
For now we disregard the $\e$ notation.   
In the following proposition we
 determine {\em a priori} estimates for a weak solution $u\in H^{1}(\Om)$ to a problem of the form
\begin{align}
\label{eq1june20}
-\sum_{i,j}^ND_i (a^{ij} D_j u) &+ \sum_{i=1}^K b^iu^{n_i}=0 \text{  in  $\Om$}, \\
u &= \rho  \quad \text{on $\partial\Om$} ,\nonumber
\end{align}
with certain conditions imposed on the coefficients and exponents.  In particular,
in the following proposition we assume that $\Om \subset \mathbb{R}^n$ is connected, 
bounded, and of $C^{\infty}$-class, and $a^{ij}, b^i, \rho \in C(\overline{\Om})$ with
$\rho >0$ in $\ol{\Om}$.   

\begin{proposition}
\label{prop1july20}
Suppose that the semilinear operator in \eqref{eq1june20} has the property that
$n_i>0$ for some 
$1 \le i \le K$.  Let $n_K$ be the largest positive exponent and 
suppose that $b_K(x)>0$ in $\overline{\Om}$. 
Additionally, assume that one of the following two cases holds:
\begin{align}
\text{{(1)}}
  &\text{ $n_i < 0$ for some $1 \le i < K$ and if $n_1= \min\{n_i: n_i < 0\}$},
\label{eq1:10feb13}
\\
  & \qquad \text{then $b^1(x) < 0$ in $\overline{\Om}$.}
\nonumber
\\
\text{{(2)}}
  &\text{ $n_K$ is odd and $0<n_i~$  for all $1 \le i\le K$. }
\label{eq2:10feb13}
\end{align} 
If case \eqref{eq1:10feb13}  holds, define
\begin{align}\label{eq3:10feb13}
&\alpha_1' = \sup_{c\in\mathbb{R}_+} \left\{\sum_{i=1}^K\sup_{x\in\ol{\Omega}}b^i(x)y^{n_i}< 0 \hspace{3mm}\forall y\in (0,c)\right\},\\
&\alpha_1 = \min\{\alpha_1', \inf_{x\in \partial\Om}\rho(x)\}.
\end{align}
If case \eqref{eq2:10feb13} holds, define
\begin{align}
&\alpha_2' = \sup_{c\in\mathbb{R}} \left\{\sum_{i=1}^K\sup_{x\in\ol{\Omega}}b^i(x)y^{n_i}< 0 \hspace{3mm}\forall y\in (-\infty,c)\right\},\\
&\alpha_2 = \min\{\alpha_2', \inf_{x\in \partial\Om}\rho(x)\}.
\end{align}
If case \eqref{eq1:10feb13} or case \eqref{eq2:10feb13} holds, define
\begin{align}\label{eq4:10feb13}
&\beta' = \inf_{c\in\mathbb{R}} \left\{\sum_{i=1}^K\inf_{x\in\ol{\Omega}}b^i(x)y^{n_i}> 0 \hspace{3mm}\forall y\in (c,\infty)\right\},\\
&\beta = \max\{\beta', \sup_{x\in \partial\Om} \rho(x)\}.
\end{align}

\noindent
Under these assumptions and definitions, if case \eqref{eq1:10feb13} holds and $u \in H^1(\Om)$ is a positive
weak solution to Eq. \eqref{eq1june20}, 
then $0< \alpha_1 \le u \le \beta < \infty $.
Otherwise, if case \eqref{eq2:10feb13} holds and $u \in H^1(\Om)$ is a weak solution to Eq. \eqref{eq1june20},
then $ -\infty < \alpha_2 \le u \le \beta < \infty$.  
\end{proposition}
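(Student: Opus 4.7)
The plan is to use classical truncation test functions $(u-\beta)_+$ and $(\alpha_i-u)_+$ in the weak formulation, combined with the uniform ellipticity of $a^{ij}$ and the sign conditions built into the definitions of $\alpha_1,\alpha_2,\beta$. Because the bounds $\alpha_i$ and $\beta$ are, by construction, at most $\inf_{\partial\Omega}\rho$ and at least $\sup_{\partial\Omega}\rho$ respectively, these truncations lie in $H^1_0(\Omega)$, which makes them admissible as test functions.

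For the upper bound, I would set $v=(u-\beta)_+\in H^1_0(\Omega)$ and insert it into the weak form
\begin{equation*}
\int_{\Omega}a^{ij}D_ju\,D_iv\,dx+\int_{\Omega}\sum_{i=1}^K b^i u^{n_i}v\,dx=0.
\end{equation*}
On the set $\{u>\beta\}$ one has $D_iv=D_iu$, so uniform ellipticity gives $\int a^{ij}D_juD_iv\,dx\ge \lambda\|Dv\|_{L^2}^2$. On the same set $u>\beta\ge\beta'$, and the definition of $\beta'$ forces $\sum_i b^i(x)u^{n_i}\ge \sum_i \inf_x b^i(x)\,u^{n_i}>0$ pointwise (here I use that $y>0$ keeps $y^{n_i}>0$, so infima/suprema distribute with the correct sign). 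Hence the second integral is non-negative, forcing $\lambda\|Dv\|_{L^2}^2\le 0$, and by Poincar\'e $v\equiv 0$, i.e.\ $u\le\beta$ a.e.

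For the lower bound in case \eqref{eq1:10feb13}, I would take $v=(\alpha_1-u)_+\in H^1_0(\Omega)$. Since $u$ is assumed \emph{positive}, on $\{u<\alpha_1\}$ we have $0<u<\alpha_1\le\alpha_1'$, and the definition of $\alpha_1'$ yields $\sum_i b^i(x)u^{n_i}\le \sum_i \sup_x b^i(x)u^{n_i}<0$. Plugging $v$ into the weak form (noting $D_iv=-D_iu$ on $\{u<\alpha_1\}$) gives
\begin{equation*}
-\lambda\|Dv\|_{L^2}^2\ge \int_{\Omega}a^{ij}D_juD_iv\,dx=-\int_{\Omega}\sum_i b^iu^{n_i}v\,dx\ge 0,
\end{equation*}
again forcing $v\equiv 0$ and therefore $u\ge\alpha_1$ a.e. Case \eqref{eq2:10feb13} is structurally identical with $\alpha_2$ in place of $\alpha_1$; the key point is that $n_K$ is odd and $b^K>0$, so the leading term $b^K y^{n_K}$ goes to $-\infty$ as $y\to-\infty$, guaranteeing $\alpha_2'$ is finite and that for $y<\alpha_2'$ the sum $\sum_i \sup_x b^i(x)y^{n_i}$ is negative; the same truncation argument closes the proof since in case (2) all $n_i>0$ so $u^{n_i}$ is well-defined regardless of sign.

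The main technical nuisance, rather than a real obstacle, is checking admissibility of the test functions and the correct sign of $\sup_x b^i(x)y^{n_i}$ when $y^{n_i}$ can change sign; in case (2) one must write $\sup_x b^i(x)y^{n_i}$ as either $(\sup_x b^i)y^{n_i}$ or $(\inf_x b^i)y^{n_i}$ depending on the sign of $y^{n_i}$, but in every instance the pointwise domination $\sum_i b^i(x)y^{n_i}\le \sum_i \sup_x b^i(x)y^{n_i}$ holds. A second small point is integrability of the $u^{n_i}$ terms when some $n_i<0$: this is implicit in the hypothesis that $u$ is a positive weak solution of \eqref{eq1june20}, so the weak formulation already makes sense and no additional justification is required.
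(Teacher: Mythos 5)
Your proposal is correct and follows essentially the same route as the paper's proof: testing the weak form with the truncations $(u-\beta)^+$ and $(u-\alpha)^-$ (your $(\alpha-u)_+$), which lie in $H^1_0(\Omega)$ because $\alpha\le\inf_{\partial\Omega}\rho$ and $\beta\ge\sup_{\partial\Omega}\rho$, then using ellipticity together with the sign of $\sum_i b^i u^{n_i}$ on the relevant level sets (coming directly from the definitions of $\alpha'$ and $\beta'$) to force the truncations to vanish. The sign bookkeeping and the integrability caveat you flag match the paper's treatment, so no changes are needed.
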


\begin{remark}
We observe that Eq. \eqref{eq1june20} does not have a well-defined weak formulation for arbitrary
$u \in H^1(\Om)$.  The way to interpret Proposition~\ref{prop1july20} is that if we seek a positive solution
$u \in H^1(\Om)$ that weakly solves Eq. \eqref{eq1june20} and satisfies condition \eqref{eq1:10feb13}, then
we only need to look for solutions in $H^1(\Om)\cap [\alpha_1,\beta]$, where $[\alpha_1,\beta]$ denotes
the $L^{\infty}(\Om)$ interval of functions $u$ such $\alpha_1\le u \le  \beta~a.e$.  Similarly, we only need to look
for $u \in H^1(\Om)\cap [\alpha_2,\beta]$ if condition \eqref{eq2:10feb13} holds.  
\end{remark}

\begin{remark}
Note that for the purposes of proving Theorem~\ref{thm1june27}, we are primarily concerned with case \eqref{eq1:10feb13}.
This is the case that we will focus on for the remainder of the paper.  However, with a little extra work we could very easily generalize
Theorem~\ref{thm1june27} to allow for $n_i > 0$ for all $1 \le i \le K$ and $n_K> 0$ odd.  Then we could use case \eqref{eq2:10feb13}
to establish the necessary bounds.  
\end{remark}

\begin{proof}
We first note that in all cases $\alpha_1$, $\alpha_2$ and $\beta$ are
well-defined given the conditions on $b^1(x)$ and $b^K(x)$ and the exponents $n_i$ for $1 \le i \le K$.  In particular,
the assumption that $b^1(x) < 0$ in \eqref{eq1:10feb13} ensures that $\alpha_1$ is well-defined and the assumption
that $n_K$ is odd and $n_i >0$ ensures that $\alpha_2$ is well-defined.
 
Based on the definitions of $\alpha_1$, $\alpha_2$ and $\beta$, if $u$ is a solution to \eqref{eq1june20} (we assume $u$ is nonnegative in the case of \eqref{eq1:10feb13}), 
then it is easy to verify that the functions $\overline{\phi}_1 = (u-\beta)^+$ and $\underline{\phi}_1 = (u-\alpha_1)^-$
are in $H^1_0(\Om)$ if \eqref{eq1:10feb13} holds and 
$\underline{\phi}_2 = (u-\alpha_2)^-$ and $\overline{\phi}_2 = (u-\beta)^+$ are in $H_0^1(\Om)$ if \eqref{eq2:10feb13} holds.  
Indeed, we may write $u = u_0+u_D$, where $u_0 \in H^{1}_0(\Om)$ and we have that
\begin{align}
&0 \le \ol{\phi}_1 = (u-\beta)^+ = (u_0+u_D -\beta)^+ \le (u_D - \beta)^+ +u_0^+ \label{eq5:10feb13}\\
&0 \ge \underline{\phi}_1 = (u-\alpha_1)^- = (u_0+u_D - \alpha_1)^- \ge (u_D-\alpha_1)^- + u_0^- . \label{eq6:10feb13}
\end{align}
Taking the trace of Eqs. \eqref{eq5:10feb13} and \eqref{eq6:10feb13} and using the definition of $\alpha_1$ and $\beta$ we find
that $\underline{\phi}_1$ and $\ol{\phi}_1$ are in $H^1_0(\Om)$.  By applying a similar argument
we can conclude that $\underline{\phi}_2 \in H^1_0(\Om)$.

Define the set 
\begin{align*}
\overline{\mathcal{Y}} &=\left\{x\in\overline{\Om}~|~u\ge \beta \right\} 
\end{align*}
if case \eqref{eq1:10feb13} or \eqref{eq2:10feb13} holds.  If case \eqref{eq1:10feb13} holds, let
\begin{align*}
\underline{\mathcal{Y}}_1 &= \{x\in\overline{\Om}~|~0< u \le \alpha_1\},
\end{align*}
and if case \eqref{eq2:10feb13} holds, let 
\begin{align*}
\underline{\mathcal{Y}}_2 &=\{x \in \overline{\Om}~|~u < \alpha_2\}.
\end{align*}
Then if $u \in H^1(\Om)^+$ is a weak solution to~\eqref{eq1june20}, supp($\underline{\phi}_1$) = $\underline{\mathcal{Y}}_1$.
Similarly, if $u \in H^1(\Om)$ is a weak solution to $\eqref{eq1june20}$, then supp($\overline{\phi}_1$) = supp($\ol{\phi}_2$) = $\overline{\mathcal{Y}}$ and supp($\underline{\phi}_2$) = $\underline{\mathcal{Y}}_2$.

We have the following string of inequalities for $\underline{\phi}_1$ if condition \eqref{eq1:10feb13} holds:
\begin{align}
C_2\|\underline{\phi}_1\|^2_{H^1(\Om)} &\le C_1\|\nabla((u-\alpha)^-)\|^2_{L^2(\Om)} \\
&\le \int_{\Om} a^{ij} D_j((u-\alpha)^-)D_j ((u-\alpha)^-) ~dx \nonumber \\
& =  \int_{\Om}a^{ij} D_j(u-\alpha)D_j ((u-\alpha)^-)~ dx \nonumber \\
&= \int_{\underline{\cY}_1}(-\sum_{i=1}^K b^i(x)u^{n_i}) (u-\alpha)~ dx \le 0. \nonumber
\end{align}
We can make a similar argument to show that $\|\underline{\phi}_2\|_{H^1(\Om)} = 0$ if condition \eqref{eq2:10feb13} holds.

We also have the following string of inequalities for $\overline{\phi} = \ol{\phi}_1 = \ol{\phi}_2$
if either condition \eqref{eq1:10feb13} or \eqref{eq2:10feb13} holds:
\begin{align}
C_2\|\ol{\phi}\|^2_{H^1(\Om)} & \le C_1\|\nabla((u-\beta)^+)\|^2_{L^2(\Om)} \\
&\le \int_{\Om} a^{ij} D_j ((u-\beta)^+)D_i ((u-\beta)^+)~ dx \nonumber \\
&= \int_{\Om} a^{ij} D_j (u-\beta)D_i ((u-\beta)^+) ~dx \nonumber \\
&=\int_{\overline{\mathcal{Y}}}(-\sum_{i=1}^K b^i(x)u^{n_i})(u-\beta) ~dx \le 0. \nonumber
\end{align}
The above inequalities imply the result.
\end{proof}

Now that we've established $L^{\infty}$-bounds for solutions to \eqref{eq1june20}, we can apply these bounds for each fixed $\e$ to determine a net of bounds 
for the following net of problems:
 
\begin{align}
\label{eq1july1}
 -\sum_{i,j}^ND_i a_{\e}^{ij} D_j u_{\e} &+ \sum_{i=1}^K b_{\e}^i u_{\e}^{n_i}=0\quad \text{  in  $\Om$}\\
u_{\e} &= \rho_{\e} \quad \text{on $\partial\Om$} , \nonumber 
\end{align}
where $(a^{ij}_{\e}), (b^i_{\e}), (\rho_{\e}) \in \mathcal{E}_M(\ol{\Om})$ satisfy the following for all $\e <1$:
\begin{align}
\label{eq1july2}
 &a^{ij}_{\e}  = a^{ji}_{\e}, \hspace{5mm} a^{ij}_{\e}\xi_i\xi_j \ge \lambda_\e|\xi|^2 \ge C_1\e^{a_1}|\xi|^2  \\
 &|a^{ij}_{\e}|_{k,\alpha;\Om}, \hspace{2mm} |b^i_{\e}|_{k,\alpha;\Om} \le \Lambda_{k,\e} \le C_2(k)\e^{a_2(k)}, \hspace{3mm} \forall k \in \mathbb{N} \nonumber \\
 &b^1_{\e} \le -C_3\e^{a_3}, \hspace{3mm} \{n_i: n_i< 0 \} \ne \emptyset, \hspace{3mm} n_1=\min\{n_i : n_i < 0\}  \nonumber \\
 &b^K_{\e} \ge C_4\e^{a_4}, \hspace{3mm} \{n_i: n_i > 0\} \ne \emptyset, \hspace{3mm} n_K=\max\{n_i : n_i > 0\} \nonumber \\
 &\rho_{\e} \ge C_5\e^{a_5} \nonumber,
\end{align}
and $C_1,\cdots,C_5$ are positive constants that are independent of $\e$ and $a_1,\cdots,a_5 \in \mathbb{R}$ are
independent of $\e$.  Then notation $C_2(k)$ and $a_2(k)$ is meant to indicate that these constants may depend on $k$. 

\begin{proposition}
\label{prop2july20}
Suppose that for each fixed $\e \in (0,1]$, $u_{\e}$ is a positive solution to~\eqref{eq1july1}
with coefficients satisfying~\eqref{eq1july2}.
 Then there exist $L^{\infty}$-bounds $\alpha_{\e}$ and $\beta_{\e}$ such that for each $\e$,
$0< \alpha_{\e} \le u_{\e} \le \beta_{\e}$.
\end{proposition}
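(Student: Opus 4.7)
The plan is to apply Proposition~\ref{prop1july20} pointwise in $\e$. For each fixed $\e \in (0,1]$, the coefficients $a^{ij}_{\e}, b^{i}_{\e}$ and boundary data $\rho_{\e}$ are smooth on $\ol{\Om}$, and the structural hypotheses in \eqref{eq1july2} precisely match the conditions of case~\eqref{eq1:10feb13} of Proposition~\ref{prop1july20}: ellipticity holds with $\lambda_{\e} \ge C_1 \e^{a_1} > 0$, we have $b^1_{\e}(x) \le -C_3 \e^{a_3} < 0$ (so case~\eqref{eq1:10feb13} applies with $n_1 = \min\{n_i : n_i < 0\}$), we have $b^K_{\e}(x) \ge C_4 \e^{a_4} > 0$ with $n_K = \max\{n_i : n_i > 0\}$, and $\rho_{\e}(x) \ge C_5 \e^{a_5} > 0$, so $\rho_{\e}$ is strictly positive on $\ol{\Om}$.

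Given this verification, for each $\e$ I would define, in direct analogy with \eqref{eq3:10feb13}--\eqref{eq4:10feb13},
\begin{align*}
\alpha'_{\e} &= \sup_{c \in \mathbb{R}_+}\Bigl\{\, \sum_{i=1}^{K} \sup_{x\in\ol{\Om}} b^{i}_{\e}(x)\, y^{n_i} < 0 \ \ \forall y \in (0,c)\,\Bigr\}, \qquad \alpha_{\e} = \min\{\alpha'_{\e},\, \inf_{x \in \partial\Om} \rho_{\e}(x)\},\\
\beta'_{\e} &= \inf_{c \in \mathbb{R}}\Bigl\{\, \sum_{i=1}^{K} \inf_{x \in \ol{\Om}} b^{i}_{\e}(x)\, y^{n_i} > 0 \ \ \forall y \in (c,\infty)\,\Bigr\}, \qquad \beta_{\e} = \max\{\beta'_{\e},\, \sup_{x \in \partial\Om} \rho_{\e}(x)\}.
\end{align*}
These quantities are well-defined and strictly positive for each $\e$: the set defining $\alpha'_{\e}$ is nonempty because $n_1 < 0$ and $b^1_{\e}(x) \le -C_3 \e^{a_3} < 0$ guarantee that the leading term $\sup_{x} b^1_{\e}(x)\, y^{n_1}$ is a large negative number for $y$ near $0$, and $\inf_{\partial\Om} \rho_{\e} \ge C_5 \e^{a_5} > 0$; the set defining $\beta'_{\e}$ is nonempty because $n_K > 0$ and $\inf_{x} b^K_{\e}(x) \ge C_4 \e^{a_4} > 0$ force the sum to be positive for all sufficiently large $y$. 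Applying Proposition~\ref{prop1july20} (case~\eqref{eq1:10feb13}) with these choices then gives $0 < \alpha_{\e} \le u_{\e} \le \beta_{\e}$, as required.

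The only genuine content is the verification that Proposition~\ref{prop1july20} applies for each $\e$; the appeal itself is then mechanical. The substantive work lies not in this proposition but in the companion task, carried out later in Section~\ref{bounds1}, of obtaining explicit $\e$-dependent upper bounds on $\beta_{\e}$ and lower bounds on $\alpha_{\e}$ of the form $C\e^{a}$ (with $C$ and $a$ independent of $\e$), so that the resulting pair $(\alpha_{\e}, \beta_{\e})$ can serve as a net of sub- and super-solutions belonging to the Colombeau framework. That is where the polynomial growth assumptions in \eqref{eq1july2} on $b^1_{\e}, b^K_{\e}, \rho_{\e}$ are essential, but they play no role in the present \emph{existence} statement of the bounds themselves.
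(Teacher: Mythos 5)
Your proposal is correct and follows exactly the paper's own argument: for each fixed $\e$ the smoothness of $a^{ij}_{\e}, b^i_{\e}, \rho_{\e}$ and the sign/ellipticity conditions in \eqref{eq1july2} place the problem in case~\eqref{eq1:10feb13} of Proposition~\ref{prop1july20}, whose bounds $\alpha_{\e},\beta_{\e}$ (defined just as you write, via the $\e$-analogues of \eqref{eq3:10feb13}--\eqref{eq4:10feb13}) give $0<\alpha_{\e}\le u_{\e}\le\beta_{\e}$. Your closing remark correctly identifies that the $\e$-growth estimates are deferred to Lemma~\ref{lem1july4} and are not needed here.
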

\begin{proof}
For each fixed $\e$, if the assumptions in~\eqref{eq1july2} hold, 
then case \eqref{eq1:10feb13} of Proposition~\ref{prop1july20} is satisfied.
Therefore, for each ${\e \in (0,1]}$, there exists 
$\alpha_{\e}$ and $\beta_{\e}$ such that 
${0<\alpha_{\e} \le u_{\e} \le \beta_{\e}}$. 
\end{proof}

\subsection{Sub- and Super-Solutions}
\label{supersolution}
In the previous section we showed that if the data of \eqref{eq1july1} satisfies \eqref{eq1july2} and if $u_{\e}\in C^{\infty}(\ol{\Om})$ solves
\eqref{eq1july1} for each $\e$,
then $0< \alpha_{\e} \le u_{\e} \le \beta_{\e}$.  Now, for each $\e \in (0,1]$, we want to show that there actually exists a solution $u_{\e}\in C^{\infty}(\ol{\Om})$
satisfying $0<\alpha_{\e} \le u_{\e} \le \beta_{\e}$.  The key to proving this result lies in the fact that $\alpha_{\e}$ and $\beta_{\e}$
are sub- and super-solutions to \eqref{eq1july1} for each $\e$.

\begin{proposition}\label{prop1:15nov11}
Suppose that the coefficients in the net of problems~\eqref{eq1july1} satisfy~\eqref{eq1july2}. 
Then there exists a net $(u_{\e}) \in (C(\ol{\Om}))^I$ such that for each $\e$, $u_{\e}$ solves~\eqref{eq1july1} and  ${0 < \alpha_{\e} \le u_{\e} \le \beta_{\e}}$,
where $\alpha_{\e}$ and $\beta_{\e}$ be the bounds established in Proposition~\ref{prop2july20}.  . 
\end{proposition}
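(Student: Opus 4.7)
The plan is to apply the sub- and super-solution fixed-point theorem (Theorem~\ref{thm2june27}) pointwise in $\e$, with the constants $\alpha_\e$ and $\beta_\e$ from Proposition~\ref{prop2july20} serving directly as the sub- and super-solutions. For each fixed $\e \in (0,1]$, the operator in \eqref{eq1july1} fits the template of Theorem~\ref{thm2june27} with
\[
L_\e u = -D_i(a^{ij}_\e D_j u), \qquad f_\e(x,u) = -\sum_{i=1}^K b^i_\e(x)\, u^{n_i},
\]
and with boundary data $\rho_\e$. The conditions \eqref{eq1july2} ensure that $a^{ij}_\e$, $b^i_\e$, $\rho_\e \in C^{\infty}(\overline{\Om})$ and that $L_\e$ is uniformly elliptic on $\overline{\Om}$, so the hypotheses on the operator in Theorem~\ref{thm2june27} are met for each $\e$.

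First I would verify that $\alpha_\e$ and $\beta_\e$, viewed as constant functions on $\overline{\Om}$, satisfy the differential sub- and super-solution inequalities. Since they are constants, $L_\e \alpha_\e = L_\e \beta_\e = 0$, so the inequalities reduce to
\[
\sum_{i=1}^K b^i_\e(x)\, \alpha_\e^{n_i} \le 0 \le \sum_{i=1}^K b^i_\e(x)\, \beta_\e^{n_i}, \qquad x \in \overline{\Om}.
\]
These follow directly from the definitions \eqref{eq3:10feb13} and \eqref{eq4:10feb13}: by construction the pointwise sup of $b^i_\e(x)\, y^{n_i}$ over $x$ is nonpositive for $y \in (0,\alpha_1']$ and the pointwise inf is nonnegative for $y \in [\beta',\infty)$, and $b^i_\e(x) \le \sup_x b^i_\e$, $b^i_\e(x) \ge \inf_x b^i_\e$. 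The boundary inequalities $\alpha_\e \le \rho_\e \le \beta_\e$ on $\partial\Om$ are built into the definitions of $\alpha_\e$ and $\beta_\e$ via $\inf_{\partial\Om}\rho_\e$ and $\sup_{\partial\Om}\rho_\e$. Positivity $\alpha_\e > 0$ uses the hypothesis $b^1_\e \le -C_3\e^{a_3} < 0$ together with $n_1 < 0$: as $y \to 0^+$, $b^1_\e y^{n_1} \to -\infty$ dominates the other terms, so $\alpha_1' > 0$, while $\rho_\e \ge C_5 \e^{a_5} > 0$ forces $\inf_{\partial\Om}\rho_\e > 0$, hence $\alpha_\e > 0$.

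With all six hypotheses of Theorem~\ref{thm2june27} verified, for each $\e \in (0,1]$ one obtains a function $u_\e \in C^{\infty}(\overline{\Om})$ solving \eqref{eq1july1} and satisfying $0 < \alpha_\e \le u_\e \le \beta_\e$. Collecting these $u_\e$ into a net $(u_\e)_{\e \in (0,1]}$ yields the required element of $(C(\overline{\Om}))^I$ (in fact of $(C^{\infty}(\overline{\Om}))^I$), completing the proof.

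I do not anticipate a serious obstacle here, as the proposition is essentially a pointwise-in-$\e$ application of the machinery already established. The only subtlety worth flagging is the endpoint case in the definition of $\alpha_1'$ and $\beta'$: the defining inequalities are strict on open intervals, so one must pass to the limit and use continuity of $y \mapsto b^i_\e(x)y^{n_i}$ on compact subsets of $(0,\infty)$ to conclude that the non-strict inequalities hold at $y = \alpha_\e$ and $y = \beta_\e$ themselves. This is a routine continuity argument, but it is the one place where care is needed to make the sub-/super-solution inequalities come out of the definitions cleanly.
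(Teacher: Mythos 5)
Your strategy is exactly the paper's: treat $\alpha_{\e}$ and $\beta_{\e}$ as constant functions, verify that they are sub- and super-solutions of \eqref{eq1july1} for each fixed $\e$, and invoke Theorem~\ref{thm2june27}. Your checks of the differential inequalities (the second-order term vanishes on constants, and the signs come from the definitions \eqref{eq3:10feb13} and \eqref{eq4:10feb13}), of the boundary inequalities via $\inf_{\partial\Om}\rho_{\e}$ and $\sup_{\partial\Om}\rho_{\e}$, and of the positivity $\alpha_{\e}>0$ (from $b^1_{\e}\le -C_3\e^{a_3}$, $n_1<0$, and $\rho_{\e}\ge C_5\e^{a_5}$) all agree with the paper's proof, and your endpoint-continuity remark is a fair, if routine, point of care.

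There is one hypothesis of Theorem~\ref{thm2june27} that you assert but never verify: the ordering $\alpha_{\e}\le\beta_{\e}$ on $\ol{\Om}$, i.e.\ the second half of hypothesis (2). This is not automatic, because $\alpha'_{\e}$ is extracted from the scalar function $y\mapsto\sum_{i=1}^K\sup_{x\in\ol{\Om}}b^i_{\e}(x)y^{n_i}$ while $\beta'_{\e}$ is extracted from the different function $y\mapsto\sum_{i=1}^K\inf_{x\in\ol{\Om}}b^i_{\e}(x)y^{n_i}$, so one must argue that the first threshold lies below the second. The paper devotes a separate step to this: it introduces $\gamma_{\e}=\inf_{c\in\mathbb{R}}\{\sum_{i=1}^K\sup_{x\in\ol{\Om}}b^i_{\e}(x)d^{n_i}\ge 0\ \forall d\in(c,\infty)\}$ and shows $\alpha'_{\e}\le\gamma_{\e}\le\beta'_{\e}$, the second inequality using $\sum_i\inf_x b^i_{\e}(x)y^{n_i}\le\sum_i\sup_x b^i_{\e}(x)y^{n_i}$ for $y>0$. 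Equivalently, if one had $\beta'_{\e}<\alpha'_{\e}$, any $y$ strictly between them would satisfy $\sum_i\inf_x b^i_{\e}(x)y^{n_i}>0$ and $\sum_i\sup_x b^i_{\e}(x)y^{n_i}<0$ simultaneously, a contradiction. Once $\alpha'_{\e}\le\beta'_{\e}$ is known, $\alpha_{\e}\le\alpha'_{\e}\le\beta'_{\e}\le\beta_{\e}$, the interval $[\alpha_{\e},\beta_{\e}]$ is nonempty, and the rest of your argument goes through verbatim. The fix is short, but as written your claim that all six hypotheses of Theorem~\ref{thm2june27} have been verified is not yet justified.
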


\begin{proof}
To solve the above family of problems in~\eqref{eq1july1}, we show that the net of $L^{\infty}$-bounds $(\alpha_{\epsilon})$ 
and $(\beta_{\epsilon})$ found in Proposition~\ref{prop2july20} is a net of sub and super-solutions to~\eqref{eq1july1}.  
We then apply Theorem~\ref{thm2june27} to
conclude that for each $\e$, there exists a solution $u_{\e} \in C^{\infty}(\overline{\Om})$.

Fix $\e$ and let $\alpha'_{\epsilon}$ and $\beta'_{\epsilon}$ be defined by~\eqref{eq3:10feb13} and~\eqref{eq4:10feb13} respectively, and 
let 
\begin{align*}
\alpha_{\epsilon} &=\min\{\alpha'_{\epsilon},\inf_{\partial\Omega}\rho_{\epsilon}(x)\}, \\
\beta_{\epsilon} &= \max\{\beta'_{\epsilon},\sup_{x\in\partial\Omega}\rho_{\epsilon}(x)\}.
\end{align*}
The conditions in Eq. \eqref{eq1july2} and the fact that $\rho_{\e} > 0$ imply that $\alpha_{\e} > 0$.
Then the definition of $\alpha_{\epsilon}$ implies that
\begin{align}
A_{\epsilon}\alpha_{\epsilon} &= \sum_{i=1}^K b^i_{\epsilon}(\alpha_{\epsilon})^{n_i} \le \sum_{i=1}^K \sup_{x\in\overline{\Omega}}b^i_{\epsilon}(\alpha_{\epsilon})^{n_i}\le 0,\\
\alpha_{\epsilon} & \le \inf_{x\in\partial\Omega}\rho_{\epsilon}(x) \le \rho_{\epsilon}, \nonumber
\end{align}
which shows that $\alpha_{\epsilon}$ is sub-solution for each $\epsilon$.  Similarly, the conditions in Eq. \eqref{eq1july2} and the definition of $\beta'_{\epsilon}$
imply that
\begin{align}
A_{\epsilon}\beta_{\epsilon} &= \sum_{i=1}^K b^i_{\epsilon}(\beta_{\epsilon})^{n_i} \ge \sum_{i=1}^K \inf_{x\in\overline{\Omega}}b^i_{\epsilon}(\beta_{\epsilon})^{n_i}\ge 0,\\
\beta_{\epsilon} & \ge \sup_{x\in\partial\Omega}\rho_{\epsilon} \ge \rho_{\epsilon},  \nonumber
\end{align}
which shows that $\beta_{\epsilon}$ is a super-solution for each $\epsilon$.

What remains is to show that that $\alpha_{\e} \le \beta_{\e}$.
Given the definition of $\alpha_{\e}$ and $\beta_{\e}$, it suffices to show that
$\alpha'_{\e} \le \beta'_{\e}$.  Define 
$$
\gamma_{\epsilon}=\inf_{c\in\mathbb{R}}\{\sum_{i=1}^K\sup_{x\in\overline{\Omega}}b^i(x)d^{n_i} \ge 0 \hspace{3mm} \forall d\in(c,\infty)\}.
$$
Then we have that $\alpha'_{\epsilon}\le\gamma_{\epsilon}$ by the definition of $\alpha_{\e}'$.
Furthermore, for a fixed $\e$, given the assumptions on $b_{\e}^i(x)$,
$$
\sum_{i=1}^K\inf_{x\in\overline{\Omega}}b_{\e}^i(x)y^{n_i} \le \sum_{i=1}^K\sup_{x\in\overline{\Omega}}b_{\e}^i(x)y^{n_i}, \hspace{3mm} \forall y\in \mathbb{R}.
$$
Therefore the definition of $\beta_{\e}'$ and the above inequality clearly imply that $ \gamma_{\epsilon} \le \beta'_{\epsilon}$.  Therefore $\alpha'_{\epsilon}\le\beta'_{\epsilon}$
and the interval $[\alpha_{\epsilon},\beta_{\epsilon}]$ is a nonempty subset of $\mathbb{R}^+$.  For each $\e \in (0,1]$, the hypotheses of Theorem~\ref{thm2june27} 
are satisfied for the elliptic problem \eqref{eq1july2}, so we may conclude that there exists a net of solutions $(u_{\e}) \in (C^{\infty}(\ol{\Om}))^I$ that satisfy $0<\alpha_{\e} \le u_{\e} \le \beta_{\e}$ for each fixed $\e$. 
\end{proof}

The final task in this section is to show that an operator $A \in \mathcal{A}_0$, with coefficients satisfying \eqref{eq1july2},
is a well-defined operator on any element $u \in \mathcal{E}_M(\ol{\Om})$ satisfying 
$$ \alpha_{\e} \le u_{\e} \le \beta_{\e} \quad \forall \e \in (0,1].$$  
Recall that in Section~\ref{netsofproblems} we determined that $A$ is only well-defined for invertible $u \in \mathcal{G}(\ol{\Om})$.  Therefore, 
it suffices to show that
$(\alpha_{\e}), (\beta_{\e})$ and $(\frac{1}{\alpha_{\e}}),(\frac{1}{\beta_{\e}})$ are generalized constants \eqref{eq1:3nov11}, which we verify
in the following lemma.

\begin{lemma}
\label{lem1july4} 
Let $(\alpha_{\e})$ and $(\beta_{\e})$ be the net of sub- and super-solutions to~\eqref{eq1july1} determined in Section~\ref{bounds}.  
Suppose that the coefficients of~\eqref{eq1july1} satisfy~\eqref{eq1july2}.  Then $(\alpha_{\e})$, $(\beta_{\e})$,$ (\frac{1}{\alpha_{\e}})$, and $(\frac{1}{\beta_{\e}})$ are in $\overline{\mathbb{C}}$, the ring of generalized constants.  
\end{lemma}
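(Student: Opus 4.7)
The plan is to reduce the claim to two explicit, one-sided polynomial estimates and then use the relations already built into the construction of $\alpha_\e$ and $\beta_\e$ to deduce the remaining bounds. Concretely, since $(\rho_\e)$ is a representative of an element of $\mathcal{G}(\overline{\Om})$ it is automatically moderate, so in addition to $\rho_\e \ge C_5 \e^{a_5}$ we also have $\sup_{\overline{\Om}}\rho_\e \le C_6 \e^{-b_0}$ for some $b_0 \ge 0$. Combined with $\alpha_\e \le \beta_\e$ (proved at the end of Proposition~\ref{prop1:15nov11}), membership of all four nets in $\overline{\mathbb{C}}$ will follow from the two claims
\begin{align*}
\text{(i) } \beta'_\e \le C_+ \e^{-M_+}, \qquad
\text{(ii) } \alpha'_\e \ge C_- \e^{M_-},
\end{align*}
for some constants $C_\pm>0$ and exponents $M_\pm \ge 0$ independent of $\e$.

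For (i), I would study
$q_\e(y) := \sum_{i=1}^K \inf_{\overline{\Om}} b^i_\e(x)\, y^{n_i}$
from~\eqref{eq4:10feb13} and isolate the dominant term $b^K_\e y^{n_K}$. Set $n_* := \max\bigl(\{n_i : i\ne K,\; n_i>0\} \cup \{0\}\bigr)$, so that for $y>1$ every non-leading term is bounded in absolute value by $C_2(0)\e^{a_2(0)} y^{n_*}$. Using $\inf b^K_\e \ge C_4 \e^{a_4}$, the inequality $C_4 \e^{a_4} y^{n_K} > (K-1)\, C_2(0) \e^{a_2(0)} y^{n_*}$ is secured as soon as $y \ge y_\e$ with $y_\e = \bigl( (K-1)C_2(0)/C_4\bigr)^{1/(n_K - n_*)} \e^{(a_2(0)-a_4)/(n_K-n_*)}$, hence $\beta'_\e \le \max\{1, y_\e\}$, a polynomial bound in $\e^{-1}$. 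For (ii), I would argue dually with $p_\e(y) := \sum_{i=1}^K \sup_{\overline{\Om}} b^i_\e(x)\, y^{n_i}$. Because $\sup b^1_\e \le -C_3 \e^{a_3} < 0$ and $n_1 < 0$ is the most negative exponent, the term $\sup b^1_\e \cdot y^{n_1}$ is negative and dominates as $y \to 0^+$. Letting $n_{**} := \min\bigl(\{n_i : i \ne 1,\; n_i < 0\} \cup \{0\}\bigr)$, for $0<y<1$ the remaining terms are controlled by $(K-1)C_2(0)\e^{a_2(0)} y^{n_{**}}$, and the comparison $C_3 \e^{a_3} y^{n_1} > (K-1) C_2(0) \e^{a_2(0)} y^{n_{**}}$ holds provided $y \le z_\e$ with $z_\e = \bigl(C_3/((K-1)C_2(0))\bigr)^{1/(n_{**}-n_1)} \e^{(a_3 - a_2(0))/(n_{**}-n_1)}$. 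This gives $\alpha'_\e \ge \min\{1, z_\e\}$, a positive power-of-$\e$ lower bound.

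Assembling the pieces: from (i) and $\sup\rho_\e \le C_6 \e^{-b_0}$ I obtain $\beta_\e \le C \e^{-N}$, hence $(\beta_\e) \in \overline{\mathbb{C}}$; from (ii) and $\inf \rho_\e \ge C_5 \e^{a_5}$ I obtain $\alpha_\e \ge C' \e^{N'}$, hence $(1/\alpha_\e) \in \overline{\mathbb{C}}$. The other two bounds are immediate from $\alpha_\e \le \beta_\e$: the upper bound on $\beta_\e$ gives an upper bound on $\alpha_\e$, and the positive lower bound on $\alpha_\e$ gives a positive lower bound on $\beta_\e$, so $(\alpha_\e), (1/\beta_\e) \in \overline{\mathbb{C}}$ as well. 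The only real obstacle is the combinatorial bookkeeping around the edge cases in which there is no second-largest positive exponent or no second most negative exponent, handled uniformly by including $0$ in the definitions of $n_*$ and $n_{**}$; once that is done, the denominators $n_K - n_*$ and $n_{**} - n_1$ are strictly positive and all constants and exponents in the final estimates are finite and $\e$-independent.
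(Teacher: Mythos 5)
Your proposal is correct and follows essentially the same route as the paper: bound $\beta'_\e$ above and $\alpha'_\e$ below by powers of $\e$ by letting the dominant terms $b^K_\e y^{n_K}$ (for large $y$) and $b^1_\e y^{n_1}$ (for small $y$) absorb the remaining terms via the bounds in \eqref{eq1july2}, then combine with the assumed bounds on $\rho_\e$ and the inequality $\alpha_\e \le \beta_\e$. Your lumping of all non-dominant terms into a single $y^{n_*}$ (resp.\ $y^{n_{**}}$) bound and reading the estimate directly off the sup/inf definitions of $\alpha'_\e,\beta'_\e$ is a mild streamlining of the paper's argument, which instead compares against a two-term function, computes its positive root, and treats the cases $\alpha'_\e \in (0,c_\e)$ and $\alpha'_\e \notin (0,c_\e)$ separately.
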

\begin{remark}
Note that if  $(\frac{1}{\alpha_{\e}}) \in \ol{\mathbb{C}}$, then this implies that there exists an $\e_0\in (0,1)$, some constant $C$ 
independent of $\e$ and $a\in \mathbb{R}$ such that 
$\alpha_{\e} \ge C\e^a$ for all $\e \in (0,\e_0)$.  Then if $(u_{\e}) \in \mathcal{E}_M(\ol{\Om})$ satisfies $\alpha_{\e} \le u_{\e} \le \beta_{\e}~$  for each $\e$, $(\frac{1}{\alpha_{\e}}) \in \ol{\mathbb{C}}$ implies that $u = [(u_{\e})]$ is invertible in $\mathcal{G}(\ol{\Om})$.  See Section~\ref{netsofproblems} and \cite{GKOS01} for more details.

\end{remark}
\begin{proof}
We need to show
that there exists constants $D_1,D_2$ independent of $\e$ and $\e_0 \in (0,1)$ such that for all $\e \in (0,\e_0)$,
\begin{align*}
\alpha_{\e}&\ge D_1\e^{b_1} \hspace{3mm} \text{for some $b_1\in\mathbb{R}$}, \\
\beta_{\e} &\le D_2\e^{b_2} \hspace{3mm} \text{for some $b_2\in\mathbb{R}.$}
\end{align*}
So it is necessary to verify that there exists constants $D_1$ and $D_2$ so that for $\e$ sufficiently small
\begin{align*}
\alpha'_{\e} &\ge D_1\e^{b_1}, \hspace{3mm} \text{and} \hspace{3mm}\inf_{x\in \partial\Om} \rho_{\e} \ge D_1\e^{b_1},
\\
\beta'_{\e} &\le D_2\e^{b_2}, \hspace{3mm} \text{and} \hspace{3mm}\sup_{x\in\partial\Om} \rho_{\e} \le D_2\e^{b_2}.
\end{align*}
Given that $(\rho_{\e}) \in \mathcal{E}_M(\overline{\Om})$,  
$$\sup_{x\in\partial\Om} \rho_{\e} \le \sup_{x\in\overline{\Om}}\rho_{\e} = \mathcal{O}(\e^b),$$ 
for some $b\in \mathbb{R}$.  
This and the assumption on $(\rho_{\e})$ in \eqref{eq1july2} imply that we only need to obtain the necessary $\e$-bounds on $\alpha'_{\e}$ and $\beta'_{\e}$. 

For now, drop the $\e$ notation and consider $\alpha'$ defined in \eqref{eq3:10feb13}.  For a given function $f$, define
$$ \underline{\gamma}_{f} = \sup_{c\in\mathbb{R}_+}\left\{f(b) \le 0 \hspace{3mm}\forall b\in (0,c)\right\}.$$
Given that 
$$\alpha' = \sup_{c\in\mathbb{R}_+} \left\{\sum_{i=1}^K\sup_{x\in\ol{\Omega}}b^i(x)y^{n_i}\le 0 \hspace{3mm}\forall y\in (0,c)\right\},$$
it is clear that for another function $f(y)$ such that 
$$f(y) \ge \sum_{i=1}^K\sup_{x\in\ol{\Omega}}b^i(x)y^{n_i} \hspace{3mm} \text{on $(0,c)$},$$
if $\underline{\gamma}_f$ is defined and $\alpha' \in (0,c)$, it must hold that $\underline{\gamma}_f \le \alpha'$.  
Let $C_1 =|\{n_i:n_i\ge0\}|$ and $C_2 =|\{n_i:n_i<0\}|$ and if $C_2 > 1$,
 let $n_{i_2} =\min\{n_i: n_1<n_i<0\}$.  Note that $C_1, C_2 \ge 1$ based on the assumptions in~\eqref{eq1july2}.   
Then recalling that $b_1(x)<0$,  $b_K(x)>0$ correspond to the coefficients of the terms with the smallest negative and largest positive exponent of $\sum^K_i b^i(x)u^{n_i}$,
 if $\sup_{x\in\overline{\Om}}|b^i(x)| \le \Lambda$ for each $i$, the following must hold for $y\in (0,1)$:
\begin{align}
\sum_{i=1}^K\sup_{x\in\ol{\Omega}}b^i(x)y^{n_i}\le \sup_{x\in\overline{\Omega}}b_1(x)y^{n_1}+C_1\Lambda +(C_2-1)\Lambda y^{n_{i_2}}.
\end{align}

Define
$$
d=\left(\frac{-\sup_{x\in\ol{\Om}} (b_1(x))}{2(C_2-1)\Lambda}\right)^{\frac{1}{n_{i_2}-n_1}}
$$
if $ C_2 > 1$ and let $d =1$ if $C_2 =1$.  Then let $c = \min\{1,d\}$.
The definition of $c$ implies that 
$$(C_2-1)\Lambda y^{n_{i_2}}\le -\frac{\sup_{x\in\overline{\Om}}b_1(x)}{2} y^{n_1},$$ 
for all $y\in (0,c)$.  So for $y\in (0,c),$
$$
\sum_{i=1}^K\sup_{x\in\overline{\Omega}}b^i(x)y^{n_i}\le \frac{\sup_{x\in\overline{\Om}}b_1(x)}{2} y^{n_1} + C_1\Lambda =f(y).
$$
Then if $\alpha' \in (0,c)$, $\alpha' \ge \underline{\gamma}_f$. Given that $f(y)$ is a monotone increasing function on 
$\mathbb{R}_+$, $\underline{\gamma}_f$ is the lone positive root of $f(y)$.  Thus, 
$$\underline{\gamma}_f = \left(\frac{-\sup_{x\in\ol{\Om}}b_1(x)}{2C_1\Lambda}\right)^{\frac{1}{-n_1}},$$ 
which implies that if $\alpha' \in (0,c)$,
$$\alpha' \ge \left(\frac{-\sup_{x\in\ol{\Om}}b_1(x)}{2C_1\Lambda}\right)^{\frac{1}{-n_1}}.$$ 

Similarly, for a fixed $\e \in (0,1)$, define
$$
d_{\e}=\left(\frac{-\sup_{x\in \ol{\Om}}( b_{\e}^1(x))}{2(C_2-1)\Lambda_{\e}}\right)^{\frac{1}{n_{i_2}-n_1}},
$$
if $C_2 > 1 $ and 
let $d_{\e} = 1$ if $C_2 =1$.  Let $c_{\e} = \min\{1,d_{\e}\}$.
Then for $y \in (0,c_{\e})$, we have that 
$$(C_2-1)\Lambda_{\e} y^{n_{i_2}}\le -\frac{\sup_{x\in\overline{\Om}}b_{\e}^1(x)}{2} y^{n_1}.$$
So the above arguments imply that if $\alpha_{\e}' \in (0,c_{\e})$, then $\alpha_{\e}' \ge \underline{\gamma}_{f,\e}$ and
$$\alpha'_{\e} \ge \left(\frac{-\sup_{x\in \ol{\Om}}b^1_{\e}(x)}{2C_{1}\Lambda_{\e}}\right)^{\frac{1}{-n_1}}.$$
Given the assumptions on $b_{\e}^1(x)$ and $\Lambda_{\e}$ in~\eqref{eq1july2}, in this case we have that $\alpha'_{\e} \ge C\e^a$ for
some constant $C>0$, $a\in \mathbb{R}$ and $\e$ sufficiently small. 
Now we must show that $c_{\e} \ge C\e^a$ for some constant $C>0$, $a\in \mathbb{R}$ and $\e$ sufficiently small in the event that $\alpha'_{\e} \notin (0,c_{\e})$.  
It suffices to show that $d_{\e} \ge C\e^a$ in the event that $C_2 > 1$.
But clearly, for $\e$ sufficiently small
$$
d_{\e} = \left(-\frac{\sup_{x\in \ol{\Om}}b^1_{\e}(x)}{2(C_2-1)\Lambda_{\e}}\right)^{\frac{1}{n_{i_2}-n_1}} \ge C\e^{a},
$$
given the assumptions on $b^1_{\e}$ and $\Lambda_{\e}$ in~\eqref{eq1july2}. 
Therefore $\alpha'_{\e} \ge D_1\e^a$ for some constant $D_1>0$, $a \in \mathbb{R}$ and $\e$ sufficiently small.  
  
Now we determine bounds on the net $(\beta'_{\e})$.  Again, we temporarily drop the $\e$ and only consider $\beta'$.  Recall that
$$\beta' = \inf_{c\in\mathbb{R}} \left\{\sum_{i=1}^K\inf_{x\in\ol{\Omega}}b^i(x)y^{n_i}\ge 0 \hspace{3mm}\forall y\in (c,\infty)\right\}.$$
For a given function $f(y)$, define 
$$
\overline{\gamma}_f=\inf_{c\in\mathbb{R}} \left\{f(b) \ge 0 \hspace{3mm}\forall b\in (c,\infty)\right\}.
$$
Then if $f(y) \le \sum_{i=1}^K\sup_{x\in\ol{\Omega}}b^i(x)y^{n_i}$ on some interval $(c,\infty)$ and 
$\beta' \in (c,\infty)$, it must hold that $\overline{\gamma}_f \ge \beta'$ if $\ol{\gamma}_{f}$ is defined.  Let 
$C_1,C_2$ be as before and let ${n_{i_1} = \max\{n_i:0 \le n_i<n_K\}}$ if $C_1 > 1$. If $y>1$, then
$$
 \sum_{i=1}^K\inf_{x\in\ol{\Omega}}b^i(x)y^{n_i}\ge \inf_{x\in\ol{\Om}}( b_K(x))y^{n_K}-(C_1-1)\Lambda y^{n_{i_1}}-C_2\Lambda.
$$  
Now define 
$$
d = \left(\frac{2(C_1-1)\Lambda}{\inf_{x\in\ol{\Om}} (b_K(x))}\right)^{\frac{1}{n_k-n_{i_1}}}
$$
if $C_1 > 1$ and let $d = 1$ if $C_1 =1$.  Let $c = \max\{1,d\}$.  
Then our choice of $d$ ensures that if $C_1>1$, then
$$
-(C_1-1)\Lambda y^{n_{i_1}} \ge -\frac{\inf_{x\in\ol{\Om}}(b_K(x))y^{n_K}}{2},
$$
and that for $y \in (c,\infty)$,
$$
\sum_{i=1}^K\sup_{x\in\ol{\Omega}}b^i(x)y^{n_i}\ge \frac{\inf_{x\in\ol{\Om}}( b_K(x))}{2}y^{n_K}-C_2\Lambda =f(y).
$$
So if $\beta' \in (c,\infty)$, $\beta' \le  \overline{\gamma}_f$, where $\overline{\gamma}_f$ is the lone positive root of $f$ on $\mathbb{R}_+$ given
that $f$ is monotone increasing on this interval.  So if $\beta' \in (c, \infty)$, 
$$
\beta' \le  \overline{\gamma}_f= \left(\frac{2C_2\Lambda}{\inf_{x\in\ol{\Om}}( b_K(x))}\right)^{\frac{1}{n_K}}.
$$
By defining 
\begin{align}
d_{\e} = \left(\frac{2(C_1-1)\Lambda_{\e}}{\inf_{x\in\ol{\Om}} b_{\e}^K(x)}\right)^{\frac{1}{n_k-n_{i_1}}}, \quad \text{and} \quad
c_{\e} = \max\{1,d_{\e}\},
\end{align}
and applying the above argument for $\beta'$ to the net $(\beta'_{\e})$ for each fixed $\e$, it is clear that if $\beta_{\e}' \in (c_{\e},\infty)$,
then
$$
\beta'_{\e} \le \left(\frac{2C_2\Lambda_{\e}}{\inf_{x\in\ol{\Om}} b_{\e}^K(x)}\right)^{\frac{1}{n_K}} \le C\e^a,
$$
given the assumptions on $b^K_{\e}$ and $\Lambda_{\e}$ in~\eqref{eq1july2}. 

Now assume that $\beta'_{\e} \notin (c_{\e},\infty)$.  Then it suffices to show that if $C_1>1$, 
then $d_{\e} \le C\e^a$ for $\e$ sufficiently small and some positive constants $C$ and $a \in \mathbb{R}$.   
But again, this is clearly true given the assumptions~\eqref{eq1july2} and the fact that
$$
d_{\e} = \left(\frac{2(C_1-1)\Lambda_{\e}}{\inf_{x\in\ol{\Om}} b_{\e}^K(x)}\right)^{\frac{1}{n_k-n_{i_1}}}.
$$
\end{proof}

\section{Proof of the Main Results}
\label{results}

We now prove Theorem~\ref{thm1june27} using the results from Section~\ref{bounds1}.  For clarity, we break the proof up into the steps outlined in Section~\ref{over}.

\subsection{Proof of Theorem~\ref{thm1june27}}\label{proofofthm}

\begin{proof}

\begin{itemize}
\item[Step 1:]  { \it Formulation of the problem.}  For convenience, we restate the problem and the formulation that we will use to find a solution.
Given an operator $A \in \mathcal{A}_0$, defined by~\eqref{eq1june26},
we want to solve the following Dirichlet problem in $\mathcal{G}(\overline{\Om})$:
\begin{align}
\label{eq1july4}
Au &= 0 \hspace{3mm} \text{in $\Om$},\\
u  &= \rho  \quad \text{on $\partial\Om$} \nonumber.
\end{align}

We phrase~\eqref{eq1july4} in a way that allows us to solve a net of semilinear
elliptic problems.  We assume that the coefficients of $A$ and boundary data $\rho$ have representatives $(a^{ij}_{\e}), (b^i_{\e}),$ and $(\rho_{\e})$
in $\mathcal{E}_M(\overline{\Om})$ satisfying the assumptions~\eqref{eq1june27}.  Then for this particular choice of representatives, our
strategy for solving~\eqref{eq1july4} is to solve the family of problems
\begin{align}
\label{eq2july4}
A_{\epsilon}u_{\epsilon} = -\sum_{i,j=1}^N D_i( a_{\epsilon}^{ij}D_j u_{\epsilon}) + \sum_i^N b^i_{\epsilon}u_{\epsilon}^{n_i} &= 0 \text{  in  $\Omega$},\\
u_{\epsilon}&= \rho_{\epsilon}  \quad \text{on $\partial\Om$} \nonumber,
\end{align}
and then show that the net of solutions $(u_{\e}) \in \mathcal{E}_M(\ol{\Om})$.

\item[Step 2:] {\it Determine $L^{\infty}$-estimates and a net of sub-solutions and super-solutions}.  In Section~\ref{bounds1}, we concluded that 
for each $\e$, the pair $\alpha_{\e}$ and $\beta_{\e}$ determine sub- and super- solutions to~\eqref{eq2july4} such that $0<\alpha_{\e}<\beta_{\e}$.  
Furthermore, in Lemma~\ref{lem1july4} we concluded that there exist $C_1,C_2>0$ and $a_1,a_2 \in \mathbb{R}$ such that for $\e$ sufficiently small, the nets $(\alpha_{\e})$ and $(\beta_{\e})$ satisfy
$C_1\e^{a_1}\le \alpha_{\e} < \beta_{\e} \le C_2\e^{a_2}$, thereby verifying that $(\alpha_{\e}), (\beta_{\e}), (\frac{1}{\alpha_{\e}}), (\frac{1}{\beta_{\e}}) \in \overline{\mathbb{C}}$,
the ring of generalized constants.

\item[Step 3:]{\it Apply fixed-point theorem to solve each semilinear problem in~\eqref{11feb15eq1}.}  This follows from 
Proposition~\ref{prop1:15nov11}.  We briefly reiterate the proof here.  We simply verify the hypotheses of Theorem~\ref{thm2june27}.  
For each fixed $\e$ we have sub- and super-solutions $\alpha_{\e}$
and $\beta_{\e}$ satisfying $0<\alpha_{\e}<\beta_{\e}$ and $a^{ij}_{\e}, b^i_{\e}, \rho_{\e} \in C^{\infty}(\overline{\Om})$ 
satisfying~\eqref{eq1july2}.  Finally, $\Om$ is of $C^{\infty}$-class and the function 
$$f(x,y) = -\sum_{i=1}^Kb^i_{\e}(x)y^{n_i} \in C^{\infty}(\overline{\Om}\times \mathbb{R}^+),$$
so we may apply Theorem~\ref{thm2june27} to conclude that there exists a net of solutions $(u_{\e})$ to~\eqref{eq1july1} satisfying $0<\alpha_{\e} \le u_{\e} \le \beta_{\e}$.

\item[Step 4:]{\it Verify that the net of solutions $(u_{\e}) \in \mathcal{E}_M(\overline{\Om}).$}  Now that it is clear that a solution exists for~\eqref{eq1july1}
for each $\e \in (0,1]$, it is necessary to establish estimates
that show that the net of solutions $(u_{\e})$ is in $\mathcal{E}_M(\overline{\Omega})$.  That is, we want to show that for each $k \in \mathbb{N}$ and
all multi-indices $|\beta | \le k$, there exists $a \in \mathbb{R}$ such that
$$
\sup_{x \in \overline{\Om}}\{|D^{\beta}u_{\e}(x)|\} = \mathcal{O}(\e^a).
$$
By standard interpolation inequalities, it suffices to show that for $\gamma \in (0,1)$ and each $k \in \mathbb{N}$, there 
exists an $a \in \mathbb{R}$ such that
$$
|u_{\e}|_{k,\gamma;\Om} =  \mathcal{O}(\e^a).
$$
By Theorem~\ref{thm1june30}, we have that if $u_{\e}$ is a solution to~\eqref{eq1july1} with coefficients satisfying~\eqref{eq1july2}, then
\begin{align}
|u_{\e}|_{2,\gamma;\Om}\le C\left(\frac{\Lambda_{\e}}{\lambda_{\e}}\right)^3 (|u_{\e}|_{0;\Om}+|\rho_{\e}|_{2,\gamma;\Om}+\sum_{i=1}^K |b^i_{\e}(u_{\e})^{n_i}|_{0,\gamma;\Om}).
\end{align}
Observe that
\begin{align}
\label{eq1july5}
|u_{\e}^{n_i}|_{0,\gamma;\Om} \le |u^{n_i}_{\e}|_{0;\Om} + n_i[u_{\e}]_{0,\gamma;\Om}|u_{\e}|^{n_i-1}_{0;\Om}
\end{align}
if $n_i>0$ and
\begin{align}
\label{eq2july5}
|u_{\e}^{n_i}|_{0,\gamma;\Om} \le |u^{n_i}_{\e}|_{0;\Om} + \frac{1}{|u_{\e}^{-n_i}|^2_{0;\Om}}(-n_i)[u_{\e}]_{0,\gamma;\Om}|u_{\e}|^{-n_i-1}_{0;\Om},
\end{align}
if $n_i<0$.  The above inequality implies that
\begin{align}
|u_{\e}|_{2,\gamma;\Om}&\le C\left(\frac{\Lambda_{\e}}{\lambda_{\e}}\right)^3(|u_{\e}|_{0;\Om}+|\rho_{\e}|_{2,\gamma;\Om} \\
& \qquad +\sum_{i=1}^K |b^i_{\e}(x)|_{0,\gamma;\Om}(C_1(\alpha_{\e},\beta_{\e},n_i)+C_2(n_i,\alpha_{\e},\beta_{\e})|u_{\e}|_{0,\gamma;\Om})),  \nonumber
\end{align}
where 
$$C_1(n_i,\alpha_{\e},\beta_{\e}) = \beta_{\e}^{n_i} \hspace{2mm} \text{ and} \hspace{2mm} C_2(n_i,\alpha_{\e},\beta_{\e}) = n_i\beta_{\e}^{n_i-1}, \hspace{2mm} \text{ if} \hspace{2mm} n_i>0 \hspace{2mm} \text{ and}$$ 
$$C_1(n_i,\alpha_{\e},\beta_{\e}) = \alpha_{\e}^{n_i} \hspace{2mm} \text{  and} \hspace{2mm} C_2(n_i,\alpha_{\e},\beta_{\e})=\frac{(-n_i)\beta_{\e}^{-n_i-1}}{\alpha_{\e}^{-2n_i}} \hspace{2mm} \text{ if} \hspace{2mm} n_i<0.$$
Application of the interpolation inequality 
$$|u_{\e}|_{0,\gamma} \le C(\delta_{\e}^{-1}|u_{\e}|_0 + \delta_{\e}|u_{\e}|_{2,\gamma}),$$ 
where $\delta_{\e}$ is arbitrarily small and $C$ is independent of $\delta_{\e}$, implies that
\begin{align}
|u_{\e}|_{2,\gamma;\Om}
& \le C\left(\frac{\Lambda_{\e}}{\lambda_{\e}}\right)^3(|u_{\e}|_{0;\Om}
      +|\rho_{\e}|_{2,\gamma;\Om}
\\
&\qquad +\sum_{i=1}^K |b^i_{\e}(x)|_{0,\gamma;\Om}(C_1(n_i,\alpha_{\e},\beta_{\e})
\nonumber \\
& \qquad +C_2(n_i,\alpha_{\e},\beta_{\e})(C(\delta_{\e}^{-1}|u_{\e}|_{0;\Om}+\delta_{\e}|u_{\e}|_{2,\gamma;\Om}) ))).  \nonumber
\end{align}
Therefore,
\begin{align}
&\left(1-\delta_{\e}\left( \frac{\Lambda_{\e}}{\lambda_{\e}} \right) \sum_{i=1}^K |b^i_{\e}(x)|_{0,\gamma;\Om}C_2(n_i,\alpha_{\e},\beta_{\e})\right)|u_{\e}|_{2,\gamma;\Om} \\ 
& \qquad \quad \le C\left(\frac{\Lambda_{\e}}{\lambda_{\e}}\right)^3(|u_{\e}|_{0;\Om}+|\rho_{\e}|_{2,\gamma;\Om} \nonumber \\
& \qquad \quad \quad +\sum_{i=1}^K |b^i_{\e}(x)|_{0,\gamma;\Om}( C_1(n_i,\alpha_{\e},\beta_{\e})+C_2(n_i,\alpha_{\e},\beta_{\e})\delta_{\e}^{-1}|u_{\e}|_{0;\Om})).  \nonumber
\end{align}
But given the assumptions on $\Lambda_{\e}$, $\lambda_{\e}$, the bounds previously established for the nets $(\alpha_{\e}) \text{ and } (\beta_{\e})$ in Lemma~\ref{lem1july4}, and given that $(b^i_{\e}(x)) \in \mathcal{E}_M(\overline{\Om})$, there exists $\e_0 \in (0,1)$, $a \in \mathbb{R}$ and $C>0$ such that for all $\e \in (0,\e_0)$,
$$
\left( \frac{\Lambda_{\e}}{\lambda_{\e}} \right) \sum_{i=1}^K |b^i_{\e}(x)|_{0,\gamma}C_2(n_i,\alpha_{\e},\beta_{\e}) \le C\e^a .
$$
Therefore, choosing 
$$
\delta_{\e} = \frac{1}{2C\e^a},
$$
it is clear that for $\e \in (0,\e_0)$,
\begin{align}
|u_{\e}|_{2,\gamma;\Om} &\le C\left(\frac{\Lambda_{\e}}{\lambda_{\e}}\right)^3(|u_{\e}|_{0;\Om}+|\rho_{\e}|_{2,\gamma;\Om} \\
& \qquad + \sum_{i=1}^K |b^i_{\e}(x)|_{0,\gamma;\Om}( C_1(n_i,\alpha_{\e},\beta_{\e})+C_2(n_i,\alpha_{\e},\beta_{\e},\e^a)|u_{\e}|_{0;\Om})). \nonumber
\end{align}
Given that $(\alpha_{\e})$, $(\beta_{\e}) \in \ol{\mathbb{C}}$,  $\alpha_{\e}\le u_{\e}\le \beta_{\e}$ and $(\rho_{\e})$, $ (b^i_{\e}) \in \mathcal{E}_M(\overline{\Om})$, the above inequality implies that for some $a\in \mathbb{R}$,
$$
|u_{\e}|_{2,\gamma;\Om} = \mathcal{O}(\e^a).
$$  
Now we need to utilize the $\e$-growth conditions on $|u_{\e}|_{2,\gamma;\Om}$ and induction to show that for any $k >2$ that 
\begin{align}
\label{eq2july4-B}
|u_{\e}|_{k,\gamma;\Om}=\mathcal{O}(\e^a) \quad \text{for some $a\in \mathbb{R}$}.
\end{align}
Let $(u_{\e})$ be a smooth net of solutions to~\eqref{eq2july4} and additionally assume that \eqref{eq2july4-B} holds for
all $j\le k$.  Let $\nu$ be a multi-index of length $k-1$.  Then by differentiating both sides of~\eqref{eq2july4}, we see that for each $\e$, $u_{\e}$ satisfies the Dirichlet problem
\begin{align}
\label{eq3uly4}
\sum_{i,j=1}^N D^{\nu}( -D_i( a_{\epsilon}^{ij}D_j u_{\epsilon})) &=  -\sum_{i=1}^K D^{\nu}( b^i_{\epsilon}u_{\epsilon}^{n_i}) \text{  in  $\Omega$} \\
D^{\nu}u_{\epsilon} &= D^{\nu}\rho_{\epsilon}  \quad \text{on $\partial\Om$} . \nonumber
\end{align} 
Rearranging the above equation and applying the multi-index product rule we find that
\begin{align}
\label{eq4july4}
\sum_{i,j=1}^N a^{ij}_{\e}D_{ij}(D^{\nu}u_{\e}) &= -\sum_{i,j=1}^N D^{\nu}((D_ia_{\e}^{ij})(D_ju_{\e}))
\\
& \qquad 
-\sum_{i,j=1}^N\sum_{\stackrel{\sigma+\mu = \nu}{\sigma \ne \nu}}\frac{\nu!}{\sigma!\mu!}(D^{\mu}a_{\e}^{ij})(D^{\sigma}D_{ij}u_{\e})
\nonumber \\
& \qquad
+\sum_{i=1}^K\sum_{\sigma+\mu = \nu}\frac{\nu!}{\sigma!\mu!}(D^{\mu}b^i_{\e})(D^{\sigma}((u_{\e})^{n_i})). \nonumber
\end{align}
Therefore, we may apply Theorem~\ref{thm1june30} to~\eqref{eq4july4} to conclude that for an arbitrary multi-index $\nu$
such that $|\nu| = k-1$,
\begin{align}
|D^{\nu}u_{\e}|_{2,\gamma;\Om} 
&\le 
   C\left(\frac{\Lambda_{\e}}{\lambda_{\e}}\right)^3
   \left(|D^{\nu}u_{\e}|_{0;\Om} + |D^{\nu}\rho_{\e}|_{2,\gamma;\Om} \right.
   \\
& \qquad + |\sum_{i,j=1}^N D^{\nu}((D_ia_{\e}^{ij})(D_ju_{\e}))|_{0,\gamma;\Om} 
   \nonumber \\
& \qquad +\sum_{i,j=1}^N \sum_{\stackrel{\sigma+\mu = \nu}{\sigma \ne \nu}}\frac{\nu!}{\sigma!\mu!}|D^{\mu}a_{\e}^{ij}|_{0,\gamma;\Om}|D^{\sigma}D_{ij}u_{\e}|_{0,\gamma;\Om}
  \nonumber \\
& \qquad
  +\sum_{i=1}^K\sum_{\sigma+\mu = \nu}\frac{\nu!}{\sigma!\mu!}|D^{\mu}b^i_{\e}|_{0,\gamma;\Om}|D^{\sigma}((u_{\e})^{n_i})|_{0,\gamma;\Om}
  ) \nonumber \\
& \le 
  C\left(\frac{\Lambda_{\e}}{\lambda_{\e}}\right)^3(|D^{\nu}u_{\e}|_{0;\Om} 
+ |D^{\nu}\rho_{\e}|_{2,\gamma;\Om}
  \nonumber \\
& \qquad
+\sum_{i,j=1}^N \sum_{\sigma+\mu = \nu}\frac{\nu!}{\sigma!\mu!}|D^{\mu}(D_ia_{\e}^{ij})|_{0,\gamma;\Om}|D^{\sigma}(D_ju_{\e})|_{0,\gamma;\Om}
  \nonumber \\
& \qquad
  +\sum_{i,j=1}^N \sum_{\stackrel{\sigma+\mu = \nu}{\sigma \ne \nu}}\frac{\nu!}{\sigma!\mu!}|D^{\mu}a_{\e}^{ij}|_{0,\gamma;\Om}|D^{\sigma}D_{ij}u_{\e}|_{0,\gamma;\Om}
  \nonumber \\
& \qquad +\sum_{i=1}^K \sum_{\sigma+\mu = \nu}\frac{\nu!}{\sigma!\mu!}|D^{\mu}b^i_{\e}|_{0,\gamma;\Om}|D^{\sigma}((u_{\e})^{n_i})|_{0,\gamma;\Om}     ).
\nonumber
\end{align}
By our inductive hypothesis and the assumptions on the coefficients, it is immediate that every term in the above expression is $\mathcal{O}(\e^a)$ for some $a \in \mathbb{R}$ except for the last term.
So to show
$$
|D^{\nu}u_{\e}|_{2,\gamma;\Om} = \mathcal{O}(\e^a) \quad \text{for some $a \in \mathbb{R}$},
$$
it suffices to show that 
$$
\sum_{i=1}^K \sum_{\sigma+\mu = \nu}\frac{\nu!}{\sigma!\mu!}|D^{\mu}b^i_{\e}|_{0,\gamma;\Om}|D^{\sigma}((u_{\e})^{n_i})|_{0,\gamma;\Om} = \mathcal{O}(\e^a) \quad \text{for some $a \in \mathbb{R}$}.
$$
Given that $b^i_{\e} \in \mathcal{E}_M(\ol{\Om})$ for each $1 \le i \le K$,
$$
|D^{\mu}b^i_{\e}|_{0,\gamma;\Om} =  \mathcal{O}(\e^a) \quad \text{for some $a\in \mathbb{R}$}.
$$
Therefore, it is really only necessary to show
that for any multi-index $\sigma$, such that $|\sigma| = j \le k-1$, that there exists an $a \in \mathbb{R}$
such that
$$
|D^{\sigma}((u_{\e})^{n_i})|_{0,\gamma;\Om} = \mathcal{O}(\e^a).
$$
But observe that 
$D^{\sigma}((u_{\e})^{n_i})$ is a sum of terms of the form
$$
(u_{\e})^{n_i-m}D^{\sigma_1}u_{\e}D^{\sigma_2}u_{\e}\cdots D^{\sigma_m}u_{\e},
$$
where $\sigma_1+\sigma_2 +\cdots  \sigma_m = \sigma$ and $m \le j \le k-1$.
This follows immediately from the chain rule.
Therefore we have the following bound:
\begin{align}
|D^{\sigma}((u_{\e})^{n_i})|_{0,\gamma;\Om} 
& \le (n_i)|(u_{\e})^{n_i-1}|_{0,\gamma;\Om}|D^{\sigma}u_{\e}|_{0,\gamma;\Om}
   \\
& \qquad
   +\sum_{\sigma_1+\sigma_2=\sigma}\frac{\sigma!}{\sigma_1!\sigma_2!}(n_i)(n_i-1)|(u_{\e})^{n_i-2}|_{0,\gamma;\Om} \nonumber \\
& \qquad \qquad 
  \cdot |D^{\sigma_1}u_{\e}|_{0,\gamma;\Om}|D^{\sigma_2}u_{\e}|_{0,\gamma;\Om}+ \cdots 
   \nonumber \\
& \qquad +\sum_{\sigma_1+\sigma_2+\cdots+\sigma_j = \sigma}\frac{\sigma!}{\sigma_1!\sigma_2!\cdots \sigma_j!}(n_i)(n_i-1)
   \nonumber \\
& \qquad \qquad
   \cdots(n_i-j) |(u_{\e})^{n_i-j}|_{0,\gamma;\Om} |D^{\sigma_1}u_{\e}|_{0,\gamma;\Om}
   \nonumber \\
& \qquad \qquad
\cdots |D^{\sigma_j}u_{\e}|_{0,\gamma;\Om}.
   \nonumber
\end{align}
Using~\eqref{eq1july5} and~\eqref{eq2july5}, for each $m \le j$ we may bound the terms of the form $|(u_{\e})^{n_i-m}|_{0,\gamma;\Om}$ using $|u_{\e}|_{0, \gamma;\Om}$, $\alpha'_{\e}$ and $\beta'_{\e}$. 
Then our inductive hypothesis and the growth conditions on $(\alpha'_{\e})$ and $(\beta'_{\e})$ imply that 
$$
|D^{\sigma}((u_{\e})^{n_i})|_{0,\gamma;\Om} =  \mathcal{O}(\e^a) \quad \text{for some $a \in \mathbb{R}$}
$$
This implies that 
$$
|D^{\nu}u_{\e}|_{2,\gamma;\Om} = \mathcal{O}(\e^a) \quad \text{ for some $a \in \mathbb{R}$}.
$$
As $\nu$ was an arbitrary multi-index such that $|\nu| = k-1$, this implies there exists $a\in \mathbb{R}$ such that 
$$
|u_{\e}|_{k+1,\gamma;\Om} = \mathcal{O}(\e^a).
$$
Therefore, $(u_{\e}) \in \mathcal{E}_M(\overline{\Om})$. 

\item[Step 5:]{\it Verify that the solution is well-defined}.   
Proposition~\ref{prop1july1} and the definition of the Dirichlet problem in $\mathcal{G}(\overline{\Om})$ given in Section~\ref{Dirichlet} imply that 
$[(u_{\e})]$ is indeed a solution to the problem
\begin{align}
\label{welldefined}
Au &= 0 \hspace{3mm} \text{in $\Om$},\\
u &= \rho  \quad \text{on $\partial\Om$} , \nonumber
\end{align}
in $\mathcal{G}(\overline{\Om})$.  To see this, we consider other representatives $(\overline{a}_{\e}^{ij}), (\overline{b}^i_{\e}), (\overline{\rho}_{\e})$, and $(\overline{u}_{\e})$ 
of $[(a^{ij}_{\e})], [(b^i_{\e})], [(\rho_{\e})]$, and $[(u_{\e})]$.  Then the proof of Proposition~\ref{prop1july1} clearly implies that 
\begin{align}
-\sum_{i,j=1}^N D_i(\overline{a}_{\e}^{ij}D_j\overline{u}_{\e}) &+ \sum_{i=1}^K \overline{b}^i_{\e}(\overline{u}_{\e})^{n_i} = \eta_{\e} \hspace{3mm} \text{in $\Om$},\\
\overline{u_{\e}} &= \overline{\rho}_{\e}+\overline{\eta}_{\e}  \quad \text{on $\partial\Om$} ,\nonumber
\end{align}
where $\eta_{\e} \in \mathcal{N}(\overline{\Om})$ and $\overline{\eta}_{\e}$ is a net of functions satisfying~\eqref{boundary}.  
But this implies that this choice of representatives also satisfies~\eqref{welldefined} in $\mathcal{G}(\overline{\Om})$, so our
solution $[(u_{\e})]$ is independent of the representatives used.
\end{itemize}
\end{proof}

This completes our proof of Theorem~\ref{thm1june27}.
We now conclude by giving a brief summary and making some final remarks.

\section{Summary and Remarks}
\label{sec:conc}

We began the paper with an example to motivate the Colombeau Algebra method 
for solving the target semilinear problem \eqref{problem} with potentially 
distributional data.
In particular, in Section~\ref{Example1} we proved the existence of a solution
to a simpler ill-posed critical exponent problem~\eqref{eq3:25oct11}
in Proposition~\ref{prop1:8nov11}.
Our proof technique consisted of mollifying the data of the original problem,
and then solving a sequence of "approximate" problems with the smooth 
coefficients.
We then obtained a sequence of solutions that yielded a convergent subsequence.
This proof framework, which required only basic elliptic PDE theory, was 
modeled on the more general Colombeau approach that we then subsequently 
developed and applied in the remainder of the paper to
solve the more difficult problem~\eqref{problem}.
Following the approach of Mitrovic and Pilipovic in~\cite{MP06},
in Section~\ref{prelim} we stated a number of preliminary results 
and developed necessary technical tools for solving~\eqref{problem}.
Among these tools and results were the explicit {\em a priori} estimates found 
in~\cite{MP06}, and a description of the Colombeau framework in which the 
coefficients and data were embedded.
In particular, in Section~\ref{holder} we introduced notation for H\"{o}lder
norms and stated two {\em a priori} estimates from~\cite{GiTr77} that were 
made more precise by Mitrovic and Pilipovic in~\cite{MP06}.
In Section~\ref{Colombeau}, we then introduced the general framework for
constructing Colombeau-type algebras and the Colombeau algebra 
$\mathcal{G}(\overline{\Om})$ used in this paper.

We then stated the main result in Section~\ref{overview}, 
namely Theorem~\ref{thm1june27}, and also gave a statement and proof 
of the method of sub- and super solutions as Theorem~\ref{thm2june27}.
We then gave a detailed outlined of the plan of the proof of 
Theorem~\ref{thm1june27}, the execution of which was the focus of the 
remainder of the paper.
In Section~\ref{embed2} we also discussed methods to 
embed \eqref{problem} into the algebra for applying our 
Colombeau existence theory.
The remainder of the paper was then dedicated to developing the remaining
tools necessary to proving Theorem~\ref{thm1june27}, and then carrying out
the proof.
In Section~\ref{bounds1} we determine {\em a priori} $L^{\infty}$ bounds of 
solutions to our semilinear problem and a net of sub- and super-solutions 
satisfying explicit $\e$-growth estimates.
We first determined a net of $L^{\infty}$ bounds for positive solutions to 
our problem.
In Section~\ref{supersolution} we then showed that this net of $L^{\infty}$ 
bounds is in fact a net of sub- and super-solutions contained in 
$\mathbb{\overline{C}}$, the ring of generalized constants described in 
Section~\ref{Colombeau}.
Finally, after developing sub- and super-solutions
and some related results in Section~\ref{bounds1},
we proved the main result, Theorem~\ref{thm1june27} in Section~\ref{results},
following the plan we had laid out in Section~\ref{overview}.

We note that although the problem we set up in a manner similar to that 
used by Mitrovic and Pilipovic in~\cite{MP06},
our approach to solving our semilinear problem was distinct from 
theirs; we first determined a net of solutions $(u_{\e})$ to the family of 
semilinear problems~\eqref{eq2july4} by using the method of sub-and 
super-solutions (Theorem~\ref{thm2june27}), and our net of sub- and 
super-solutions determined in Section~\ref{supersolution}.
Once our net of solutions was determined, we then employed 
Theorems~\ref{thm1june30} and our net of sub- and super-solutions to show 
that our net of solutions was contained in $\mathcal{E}_M(\overline{\Om})$.

In this article we have attempted to develop some basic tools to allow
for a more general study of the Einstein constraint equations with 
distributional data.
Our goal was to extend the current solution theory for scalar, critical 
exponent semilinear problems such as the Lichnerowicz equation, allowing 
for more irregular data than is currently covered by the existing 
solutions theories (cf.~\cite{HNT08,HNT09} for a summary of the known
results for the CMC, near-CMC, and Far-CMC cases through 2009).
As a next step, we hope to use the tools developed in this article
to extend the near-CMC and Far-CMC existence framework for rough metrics 
developed in \cite{HNT08,DMa05,DMa06,CB04}
to cover the rough data example studied by 
Maxwell in \cite{DMa11}.

\section*{Acknowledgments}
\label{sec:ack}

MH was supported in part by NSF Awards~1217175 and 1065972.
CM was supported in part by NSF Award~1065972.



\bibliographystyle{abbrv}
\bibliography{Caleb.bib,Caleb2.bib,Caleb3.bib,Caleb4.bib,mjh.bib}


\vspace*{0.5cm}

\end{document}